\documentclass[a4paper]{amsart}
\usepackage{amsmath,caption,booktabs}
\numberwithin{equation}{section}
\usepackage{amsthm}
\usepackage{amsfonts}
\usepackage{amssymb}
\usepackage{mathrsfs}
\usepackage{tikz}
\usepackage{pdfpages}
\usepackage{environ}
\usepackage{elocalloc}
\usepackage{url}
\usepackage{caption}
\usepackage{graphicx}
\usepackage{CJKutf8}
\usepackage[normalem]{ulem}
\usepackage{xcolor}
\usepackage{etoolbox}
\usepackage{caption}
\usepackage{mathtools}
\usepackage{dcpic}
\usepackage{tikz-cd}
\usepackage[bottom]{footmisc}
\usepackage{hyperref}
\usepackage{enumitem}
\usepackage{stmaryrd}
\usetikzlibrary{positioning}
\usetikzlibrary{arrows,chains,positioning,scopes,quotes}
\usetikzlibrary{decorations.markings}

\setlength{\parskip}{\baselineskip}%
\setlength{\parindent}{0pt}%
%opening
\title[Every finite graph arises as the singular set of calibrated currents]{Every finite graph arises as the singular set of a compact $3$-d calibrated area minimizing surface}
\author{Zhenhua Liu}
\date{}
\dedicatory{Dedicated to Xunjing Wei}
\begin{document}

	%Characters
	\setlength{\parskip}{0.07 cm}%
	\setlength{\parindent}{0pt}%
	\newcommand{\ai}{\alpha}
	\newcommand{\be}{\beta}
	\newcommand{\Ga}{\Gamma}
	\newcommand{\ga}{\gamma}	
	\newcommand{\de}{\delta}
	\newcommand{\De}{\Delta}
	\newcommand{\e}{\epsilon}
	\newcommand{\lam}{\lambda}
	\newcommand{\Lam}{\Lamda}
	\newcommand{\om}{\omega}
	\newcommand{\Om}{\Omega}
	\newcommand{\si}{\sigma}
	\newcommand{\Si}{\Sigma}
	\newcommand{\vp}{\varphi}
	\newcommand{\rh}{\rho}
	\newcommand{\ta}{\theta}
	\newcommand{\Ta}{\Theta}
	\newcommand{\W}{\mathcal{O}}
	\newcommand{\ps}{\psi}
	
	%Font
	\newcommand{\mf}[1]{\mathfrak{#1}}
	\newcommand{\ms}[1]{\mathscr{#1}}
	\newcommand{\mb}[1]{\mathbb{#1}}
	%Symbols
	\newcommand{\cd}{\cdots}
	
	%Operators
	%Set theoretic
	\newcommand{\s}{\subset}
	\newcommand{\es}{\varnothing}
	\newcommand{\cp}{^\complement}
	\newcommand{\bu}{\bigcup}
	\newcommand{\ba}{\bigcap}
	\newcommand{\co}{^\circ}
	\newcommand{\ito}{\uparrow}
	\newcommand{\dto}{\downarrow}

	%With arguments
	\newcommand{\ti}[1]{\tilde{#1}}
	\newcommand{\gr}{\textnormal{gr}}
	\newcommand{\la}{\langle}
	\newcommand{\ra}{\rangle}
	\newcommand{\ov}[1]{\overline{#1}}
	\newcommand{\no}[1]{\left\lVert#1\right\rVert}
	\newcommand{\cu}[1]{\left\llbracket#1\right\rrbracket}
	\DeclarePairedDelimiter{\cl}{\lceil}{\rceil}
	\DeclarePairedDelimiter{\fl}{\lfloor}{\rfloor}
	\DeclarePairedDelimiter{\ri}{\la}{\ra}

	%Algebraic
	\newcommand{\du}{^\ast}
	\newcommand{\pf}{_\ast}
	\newcommand{\is}{\cong}
	\newcommand{\n}{\lhd}
	\newcommand{\m}{^{-1}}
	\newcommand{\ts}{\otimes}
	\newcommand{\ip}{\cdot}
	\newcommand{\op}{\oplus}
	\newcommand{\xr}{\xrightarrow}
	\newcommand{\xla}{\xleftarrow}
	\newcommand{\xhl}{\xhookleftarrow}
	\newcommand{\xhr}{\xhookrightarrow}
	\newcommand{\mi}{\mathfrak{m}}
	\newcommand{\wi}{\widehat}
	\newcommand{\sch}{\mathcal{S}}

	%Differential
	\newcommand{\w}{\wedge}
	\newcommand{\X}{\mathfrak{X}}
	\newcommand{\pd}{\partial}
	\newcommand{\dx}{\dot{x}}
	\newcommand{\dr}{\dot{r}}
	\newcommand{\dy}{\dot{y}}
	\newcommand{\dth}{\dot{theta}}
	\newcommand{\pa}[2]{\frac{\pd #1}{\pd #2}}
	\newcommand{\na}{\nabla}
	\newcommand{\dt}[1]{\frac{d#1}{d t}\bigg|_{ t=0}}
	\newcommand{\ld}{\mathcal{L}}

	%Special Sets
	\newcommand{\N}{\mathbb{N}}
	\newcommand{\R}{\mathbb{R}}
	\newcommand{\Z}{\mathbb{Z}}
	\newcommand{\Q}{\mathbb{Q}}
	\newcommand{\C}{\mathbb{C}}
	\newcommand{\bh}{\mathbb{H}}
	
	%Sum and limits
	\newcommand{\lix}{\lim_{x\to\infty}}
	\newcommand{\li}{\lim_{n\to\infty}}
	\newcommand{\infti}{\sum_{i=1}^{\infty}}
	\newcommand{\inftj}{\sum_{j=1}^{\infty}}
	\newcommand{\inftn}{\sum_{n=1}^{\infty}}	
	\newcommand{\snz}{\sum_{n=-\infty}^{\infty}}	
	\newcommand{\ie}{\int_E}
	\newcommand{\ir}{\int_R}
	\newcommand{\ii}{\int_0^1}
	\newcommand{\sni}{\sum_{n=0}^\infty}
	\newcommand{\ig}{\int_{\ga}}
	\newcommand{\pj}{\mb{P}}
	\newcommand{\dia}{\textnormal{diag}}
	
	%Text	
	\newcommand{\io}{\textnormal{ i.o.}}
	\newcommand{\aut}{\textnormal{Aut}}
	\newcommand{\out}{\textnormal{Out}}
	\newcommand{\inn}{\textnormal{Inn}}
	\newcommand{\mult}{\textnormal{mult}}
	\newcommand{\ord}{\textnormal{ord}}
	\newcommand{\F}{\mathcal{F}}
	\newcommand{\V}{\mathbf{V}}	
	\newcommand{\II}{\mathbf{I}}
	\newcommand{\ric}{\textnormal{Ric}}
	\newcommand{\sef}{\textnormal{II}}
	
	%Logical
	\newcommand{\wh}{\Rightarrow}
	\newcommand{\eq}{\Leftrightarrow}
	
	%Environmental
	\newcommand{\eqz}{\setcounter{equation}{0}}
	\newcommand{\se}{\subsection*}
	\newcommand{\ho}{\textnormal{Hom}}
	\newcommand{\ds}{\displaystyle}
	\newcommand{\tr}{\textnormal{tr}}
	\newcommand{\id}{\textnormal{id}}
	\newcommand{\im}{\textnormal{im}}
	\newcommand{\ev}{\textnormal{ev}}
	
	%Rep
	\newcommand{\gl}{\mf{gl}}
	\newcommand{\sll}{\mf{sl}}
	\newcommand{\su}{\mf{su}}
	\newcommand{\so}{\mf{so}}
	\newcommand{\ad}{\textnormal{ad}}
	\newcommand\res{\mathop{\hbox{\vrule height 7pt width .3pt depth 0pt
				\vrule height .3pt width 5pt depth 0pt}}\nolimits}
	
	\newcommand\ser{\mathop{\hbox{\vrule height .3pt width 5pt depth 0pt
				\vrule height 7pt width .3pt depth 0pt}}\nolimits}
	
	\theoremstyle{plain}
	\newtheorem{thm}{Theorem}[section]
	\newtheorem{lem}[thm]{Lemma}
	\newtheorem{prop}[thm]{Proposition}
	\newtheorem*{cor}{Corollary}
	\newtheorem*{pro}{Proposition}
	
	\theoremstyle{definition}
	\newtheorem{defn}{Definition}[section]
	\newtheorem{conj}{Conjecture}[section]
	\newtheorem{exmp}{Example}[section]
	
	\theoremstyle{remark}
	\newtheorem{rem}{Remark}
	\newtheorem*{note}{Note}
	
	\raggedbottom
	
	\begin{abstract}
		Given any (not necessarily connected) combinatorial finite graph and any compact smooth $6$-manifold  $M^6$ with the third Betti number $b_3\not=0$, we construct a calibrated 3-dimensional homologically area minimizing surface on $M$ equipped in a smooth metric $g$, so that the singular set of the surface is precisely an embedding of this finite graph. Moreover, the calibration form near the singular set is a smoothly $GL(6,\R)$ twisted special Lagrangian form. The constructions are based on some unpublished ideas of Professor Camillo De Lellis (\cite{DL}) and Professor Robert Bryant (\cite{RB}).
	\end{abstract}
	\maketitle
	\tableofcontents
	\section{Introduction}
	There are many different formulations of area-minimizing surfaces. In this paper, area-minimizing surfaces refer to area-minimizing integral currents, which roughly speaking are oriented surfaces counted with multiplicity, minimizing the area functional. Calling them surfaces is justified thanks to Almgren's Big Theorem (\cite{A}) and De Lellis-Spadaro's recent proof (\cite{DS1}\cite{DS2}\cite{DS3}). Their results show that $n$-dimensional area minimizing integral currents are smooth manifolds outside of a singular set of dimension at most $n-2.$ (In the codimension 1 case, the dimension of the singular set can be reduced to $n-7$ by \cite{HF1}.)
	
	Area-minimizing surfaces arise naturally in special holonomic geometries, thanks to the fundamental theorem of calibrated geometries by Harvey and Lawson (\cite{HLg}), which says that calibrated surfaces are area-minimizing. Calibrated surfaces like special Lagrangians and associative submanifolds have received a lot of attention in recent years. However, we do not know much about the nature of the singular set of these surfaces, which arise even as limits of sequences of smooth calibrated surfaces. %For example, take the family of holomorphic curves $z_1^2=z_2^3+\e$ in $\C^2,$ with $\e\to 0.$
	
	With the marvelous regularity theorems by Almgren-De Lellis-Spadaro in mind, a natural next step is to ask what more we can say about the singular set. We know that $2$-dimensional area minimizing integral currents are classical branched minimal immersed surfaces by \cite{SC}\cite{DSS1}\cite{DSS2}\cite{DSS3}, and all the tangent cones are unique by \cite{DSS0}\cite{BW}. 
	
	When we come to $3$-dimensional area minimizing integral currents, there is no such clear-cut description of the singular set. We know the singular set has Hausdorff dimension at most $1.$ We know the singular set in the $1$-strata of Almgren stratification is $1$-rectifiable (\cite{NV}), but not much about the top dimensional strata. 
	
	It is natural to ask what set can be the singular set and what is the structure of the surface around the singular set. Professor Robert Bryant has constructed special Lagrangians with the singular set being complete real analytic curves in a $2$-plane (\cite{RB1}). In \cite{ZL}, we construct $3$-dimensional calibrated area-minimizing surfaces with the $1$-strata of the singular set being any closed set of a circle. 
	
	However, both Bryant's and the author's examples have only intersection-type singularities. In this manuscript, using a smoothly $GL(6,\R)$ twisted version of the special Lagrangian calibration, we prove the following result. The singular set has genuine conical points at the vertices of the graphs.
	\begin{thm}\label{anyg}
		Let $M^6$ be a compact closed smooth (not necessarily orientable) manifold with the third Betti number $b_3\not=0.$ For any (not necessarily connected) finite graph $G$ in the combinatorial sense, there exists a 3-dimensional calibrated homologically area-minimizing surface $N$, so that the singular set of $N$ is an embedding of $G$. 
	\end{thm}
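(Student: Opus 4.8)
The plan is to build $N$ first as a local model inside a ball $B\subset\R^{6}=\C^{3}$, carrying the prescribed singular set there, and then to transplant it into $M$, using $b_{3}\neq0$ to place it in a nonzero homology class with a globally defined closed calibrating form, so that it becomes homologically area minimizing. Throughout I would work with $GL(6,\R)$-twisted special Lagrangian calibrations: for a smooth $A\colon B\to GL(6,\R)$ the field of $3$-covectors $x\mapsto (A(x))^{\ast}(\mathrm{Re}\,\Omega_{0})$, with $\Omega_{0}=dz_{1}\wedge dz_{2}\wedge dz_{3}$, is, with respect to the metric $A^{\ast}g_{0}$, a calibration whose faces are the twisted special Lagrangian planes $A^{-1}P$ ($P\subset\C^{3}$ special Lagrangian); equivalently, pushing the flat special Lagrangian data forward by a diffeomorphism $\Phi$ of $\C^{3}$ turns any union of special Lagrangian planes into a calibrated, hence area minimizing, surface, with $A=D\Phi^{-1}$ near the relevant point, and the pushed-forward form is still closed. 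This is the mechanism, following the ideas of De Lellis (\cite{DL}) and Bryant (\cite{RB}), that lets a rigid flat configuration be bent to match an arbitrary embedding.

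First I would fix a smooth embedding $\iota\colon G\hookrightarrow B$ with straight edges whose vertices lie in sufficiently general position. Along a neighbourhood of the relative interior of each edge $e$ the model is $\cu{P^{1}_{e}}+\cu{P^{2}_{e}}$, where $P^{1}_{e},P^{2}_{e}$ are special Lagrangian $3$-planes meeting exactly along the line carrying $e$; applying an element of $SU(3)$, which fixes $\mathrm{Re}\,\Omega_{0}$, one arranges that line to point in any prescribed direction. The two planes cross cleanly with density $2$, so their union is calibrated by the constant form $\mathrm{Re}\,\Omega_{0}$, is area minimizing, and has singular set exactly that line. These are the straight pieces of $N$.

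The core of the argument is the vertex model. For a vertex $v$ of degree $d$ with incident edge directions $u_{1},\dots,u_{d}$ I need a $3$-dimensional cone $C_{v}\subset\R^{6}$ with $\mathrm{sing}(C_{v})=\{0\}\cup\bigcup_{i=1}^{d}\R_{+}u_{i}$, coinciding in a conical neighbourhood of each ray $\R_{+}u_{i}$ with the pair $P^{1}_{e_{i}}\cup P^{2}_{e_{i}}$ chosen for the adjacent edge, carrying no other singularities (in particular no branch points), and calibrated by a $GL(6,\R)$-twisted special Lagrangian form whose twisting matrix is the identity at $0$. I would assemble $C_{v}$ from pieces of special Lagrangian planes: near each ray use the edge pair $P^{1}_{e_{i}}\cup P^{2}_{e_{i}}$, and in the complementary directions interpolate by smooth sheets, realizing the interpolation as the image of a flat union of planes under a diffeomorphism so that the outcome stays calibrated; then one verifies that the pieces close up into a cycle, that the only singular points are the $d$ prescribed rays together with the conical point $0$, and that minimality is retained. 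The difficulty is concentrated here: the directions $u_{i}$ and the planes must be in sufficiently general position to exclude unwanted intersections, the interpolating sheets must glue smoothly off the prescribed rays, the pieces must fit together into a closed cycle while accommodating arbitrary degree, and the twisting matrix must extend smoothly — it is closeness to the flat special Lagrangian setting that keeps the comass $\le1$ and preserves area minimization. I expect this step, simultaneously prescribing exactly the singular rays, matching each edge pair at both of its endpoints, and handling vertices of arbitrary degree, to be the main obstacle.

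Finally I would patch the straight pieces to the vertex cones along the collars where an edge meets its endpoints: since the vertex cone at each endpoint of $e$ was arranged to coincide with $P^{1}_{e}\cup P^{2}_{e}$ near the corresponding ray, the two descriptions agree up to a small perturbation absorbed by the twist, and the twisting data is glued so that the resulting calibrating form remains closed and of comass $\le1$. Away from $\iota(G)$ the surface $N$ is then a smooth calibrated, hence minimal, submanifold; along the edges it has clean density-$2$ singularities; at the vertices it has genuine conical singularities; and these exhaust $\mathrm{sing}(N)$, so $\mathrm{sing}(N)=\iota(G)$, with the calibration near $\mathrm{sing}(N)$ a smoothly $GL(6,\R)$-twisted special Lagrangian form, as claimed. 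Transplanting $B$ into a coordinate chart of $M$ and using $b_{3}\neq0$ to extend the calibrating form to a closed $3$-form on $M$ of comass $\le1$ for a suitable metric $g$, representing a de Rham class pairing nontrivially with $[N]$, makes $N$ homologically area minimizing in $(M,g)$ and completes the proof.
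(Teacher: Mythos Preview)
Your proposal has a genuine gap at the vertex model, and a related conceptual issue in how you handle the calibration.

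For the vertex, you propose to ``assemble $C_{v}$ from pieces of special Lagrangian planes'' with ``smooth sheets'' interpolating in the complementary directions. This is not a construction; it is a wish list. The paper does something quite different and concrete: at each vertex it places the Harvey--Lawson special Lagrangian cone $C$ (the cone over the Clifford torus $\{|z_{1}|=|z_{2}|=|z_{3}|,\ z_{1}z_{2}z_{3}\in\R_{>0}\}$) together with a finite family of special Lagrangian \emph{planes} $P_{1},\dots,P_{d}$, chosen so that each $P_{j}$ meets $C$ along exactly one ray and the $P_{j}$ pairwise meet only at the origin (Proposition~\ref{anydeg}, proved via an explicit $SU(3)$ action and a real-analyticity argument). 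The rays $C\cap P_{j}$ are the $d$ edge-germs at the vertex; the isolated conical singularity of $C$ is the vertex itself. The point you are missing is that the cone $C$ is \emph{curved}: because its link is a torus rather than a sphere, $C\cap P$ can be a single ray rather than a full line, and this is exactly what lets the singular set terminate at the vertex. Your edge model uses two planes meeting along a line; a union of planes through $0$ has singular set a union of \emph{lines}, not rays, and no amount of ``interpolation by smooth sheets'' will turn lines into rays without either introducing new singularities or destroying calibratedness. You correctly flag this step as the main obstacle, but you do not actually overcome it.

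There is also a conflation in your calibration mechanism. You write that a pointwise $GL(6,\R)$ twist $x\mapsto A(x)^{\ast}(\mathrm{Re}\,\Omega_{0})$ is ``equivalently'' a diffeomorphism pushforward. It is not: for a general smooth $A$ the twisted form has comass one in the twisted metric but is \emph{not closed}; it is closed only when $A=D\Phi^{-1}$ for a single diffeomorphism $\Phi$. The paper confronts this head-on. After gluing the cones along an edge it first builds a non-closed prototype $\overline{\phi}=(h_{q}^{-1})^{\ast}\phi$ from pointwise $U(3)$ rotations, then uses a homotopy-operator primitive (Lemma~\ref{inte}) together with a mean-curvature computation showing $d\theta|_{P}\equiv0$ (Lemma~\ref{ted}) to correct $\overline{\phi}$ to a genuinely closed form $\psi$ that still calibrates both sheets; only then does an implicit-function argument (Theorem~\ref{ift}) produce the $GL(6)$ element realizing $\psi$ as a twist and hence the metric. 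Your sketch bypasses all of this, and the glued surface is \emph{not} globally the image of a flat configuration under a single diffeomorphism, so the shortcut is unavailable.

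Finally, two nontrivial steps are missing entirely: closing the local model into a boundaryless current (the paper fills in $\partial\Sigma$ and $\partial Q_{l}$ by embedded $3$-manifolds, perturbs to transversality, and replaces the resulting isolated intersection points by explicit necks), and the global extension of the calibration to $M$ in a nonzero class, which is not a soft remark about $b_{3}\neq0$ but an appeal to Zhang's gluing construction (Lemma~2.7 of \cite{ZL1}, after \cite{YZa}).
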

	\begin{rem}The surface $N$ is irreducible in the sense of Definition \ref{irr} and  connected. 
	\end{rem}
	\begin{rem}
		The above theorem holds for closed $M^d$ with any $d\ge 6$ and $H_3(M,\R)\not=0$ dropping the calibration but retaining area-minimization. 
	\end{rem}
	\begin{rem}\label{twst}
		The precise meaning of using smoothly $GL(6,\R)$ twisted version of the special Lagrangian calibration is as follows. Near the singular set of the surfaces, both the metric and the calibration form are smoothly varying $GL(6,\R)$ pull-backs of the standard metric on $\C^3$ and the standard special Lagrangian form on $\C^3.$% Away from the singular set, the different pieces of the surfaces are glued together using calibrations in the normal bundles.
	\end{rem}
	\begin{rem}
		This work and the main theorems in \cite{ZL} use totally different methods and are complementary to each other. For $3$-dimensional surfaces, the work in \cite{ZL} alone cannot prescribe a finite graph as the singular set. We will sharpen the results in \cite{ZL} using the above theorem in future works.
	\end{rem}
	\begin{rem}
		Recently, Professor Leon Simon posted the groundbreaking construction of stable minimal hypersurfaces with Cantor set singularities \cite{LS}, which has cited a preprint version of this manuscript. Indeed, it is Professor Simon's work that encourages us to prove the now presented version of Theorem \ref{anyg}. We comment on the differences between the two. The first difference is that the minimal surfaces in his work are codimension 1, while ours are of higher codimension, thus permitting relatively larger singular sets. His approach is PDE based, and we use soft calibration arguments. It is unknown if the stable minimal hypersurfaces in his construction are minimizing, while our construction is calibrated, thus minimizing. For his construction, when the largest dimension of the singular set is one, i.e., on minimal hypersurfaces in dimension $\R^8$, the singular set is always a subset of a line, while our construction can give any finite graph, which can also be quite complicated and sometimes cannot even be embedded into any $\R^2.$ 
	\end{rem}
	\subsection{Sketch of the proof}
	The idea is as follows. We first illustrate the simple case of getting a line segment of singularities, then proceed to the more general case of finite graphs.
	
	For a special Lagrangian cone $C$, we can take the union with some special Lagrangian plane $P$. Such unions are also area-minimizing by calibration, and with a careful choice, the singular set can be a ray $\ga^-$. Professor Robert Bryant first noticed this behavior property for $G_2$ cones (\cite{RB}).
	 
Professor Camillo De Lellis has pointed out that one can glue the reflected copies of $C\cup P$ to the original union using an implicit function theorem argument (\cite{DL}). Roughly speaking, one can first apply a special unitary rotation $\rh$, so that the singular set of $\rh(C)\cup \rh(P)$ is a ray $\ga^+=\rh(C)\cap\rh(P)$ in the opposite direction to the ray $\ga=C\cap P$. Now translate $\rh(C)\cup\rh(P)$ along the direction of a vector $v$ parallel to $\ga.$ Then $\ga^-+v$ intersect $\ga^+$ along a line segment $\ga$. Now we want to glue $C$ to $\rh(C)+v$ and $P$ to $\rh(P)+v$ near the midpoint of $\ga$ while preserving area-minimizing properties.

By construction, the glued surfaces decompose into two very flat pieces near the midpoint of $\ga$. By an implicit function argument, we can prove that $GL(6)$ pushforwards of the special Lagrangian form and pushforwards of the standard metric can be used to calibrate the glued-together surfaces. This is one of the main technical difficulties as we need to deal with the two pieces together simultaneously while making the form closed.

At this stage, the surface is reducible to two pieces and has two boundary components. We first fill in each boundary with smooth $3$-manifolds and use a neck to connect sum the two components. Then use transversality to ensure only isolated transverse intersections are added to the singular set. Now replace each intersection with a neck. Up to this step, everything can be embedded into a $6$-sphere. Finally, take a simultaneous connected sum with nontrivial homology classes on the target manifold. By the results in \cite{YZa}, in each step, we can modify or extend the forms and the metrics to retain a calibration and extend globally in the final step.
		
	It is trickier to get an arbitrary graph. Instead of just using one cone and one plane and their reflections as a model, we can use one cone and a family of planes that pairwise intersect only at the origin and each of which intersects the cone along a ray. This can give an arbitrary number of intersecting rays, thus realizing an arbitrary degree of a vertex in a graph. 
	
	We then place the vertices along a line, called the book spine, in $\R^6,$ and construct roughly a book embedding of the graph. The idea is that by hand constructing diffeomorphisms, we can rotate the realizing collection of the vertices we want into the compatible configuration with an edge to be filled in, and push all the things in between into slices of $\R^5,$ called pages, that contain this line. We can then use the gluing argument for getting a line segment to fill in the edges. Finally, we can reverse the diffeomorphism, and pull back to the preimage of this diffeomorphism. Thus, the connecting edges stay in a twisted way on the different pages.
	
	The sections will be arranged as follows. First, we will show how to construct one line segment of singularities. Then we show how to modify the argument to produce a graph of singularities. Finally, we will show how to make the examples complete and homologically minimizing. Then we will give a technical enhancement of the construction above. In the appendix, we will give proof of several lemmas that are calculation based.
	\section*{Acknowledgements}
	The author acknowledges the support
	of the NSF through the grant FRG-1854147.
	I cannot thank my advisor Professor Camillo De Lellis enough for his unwavering support while I have been recovering from illness. I would also like to thank him for giving me this problem and for always generously sharing his ideas with me. I feel so lucky that I have Camillo as my advisor. I would also like to thank Professor Robert Bryant for sharing his insights on calibrations and associative cones. Without their help, this project would not have existed. I would also like to thank Dr. Donghao Wang for answering my questions on differential topology. Last but not least, I want to thank Professor Costante Bellettini, Professor Mark Haskins, Professor Fanghua Lin, Professor Siqi He, Professor Leon Simon, Professor Song Sun, and Professor Yongsheng Zhang (in alphabetical order of last name) for their interest in this work.
	\section{Notations and basic definitions}
	In this section, we will collect several notation conventions. 
	\subsection{General set up}
	Our base space is $\C^3.$ Equip $\R^6\equiv \C^3$ with the standard complex structure $J.$ The coordinates are denoted $x_1,x_2,x_3,y_1,y_2,y_3$ for $\R^6$, with $z_j=x_j+iy_j$ for the $\C^3$. Moreover, $\pd_{y_j}=J\pd_{x_j}.$ Let the standard special Lagrangian form be $\phi=\Re dz_1\w dz_2\w dz_3.$ The main reference we use for special Lagrangian geometries is the classical paper \cite{HLg}.

We will frequently speak of rays, by which we mean a smooth affine image of $[0,\infty)$ onto a straight line in Euclidean space. The direction of a ray is defined to be vectors parallel to this ray.
	
	When we mention a surface $T$, we mean an integral current $\cu{T}$. For a comprehensive introduction to integral currents, the standard references are \cite{LS1} and \cite{HF}. We will adhere to their notations. Our manuscript mostly focuses on the differential geometric side and in fact, no a priori knowledge of currents is needed. Every time we mention a current, the reader can just assume it to be a sum of chains representing oriented surfaces with singularities. We will use the following definition of the irreducibility of currents.
	\begin{defn}\label{irr}
		A closed integral current $T$ is irreducible in $U,$ if we cannot write $T=S+W,$ with $\pd S=\pd W=0,$ $S,W$ nonzero and $S$ not an integer multiple of $W.$
	\end{defn}
	Also, we will use the differential geometry convention of completeness. An integral current $T$ is complete if $\pd T=0$ and $T$ has compact support. 
	
	When we refer to $x_j$-axis, (or $y_j$-axis,) it means the $1$ dimensional real vector space spanned by $\pd_{x_j}$ (or $\pd_{y_j},$ respectively.) We will use $\cu{x_1x_2x_3},\cu{y_1y_2x_3},$ etc., to denote the special Lagrangian current associated with the $x_1x_2x_3$-plane, $y_1y_2x_3$-plane, etc. 
	
	For any two points in $p,q$ in a Euclidean space, $[p,q]$ denotes the line segment connecting them.	
	
	For any smooth manifold $N$ with boundary, we will use $B_r(N)$ to denote a tubular neighborhood of radius $r$ around $N.$
	\subsection{Calibrations}
We assume knowledge of the notions of comass and calibrations (Section II.3 and II.4 in \cite{HLg}). The primary reference is \cite{HLg}. The most important thing to keep in mind is the fundamental theorem of calibrations (Theorem 4.2 in \cite{HLg}), i.e., calibrated currents are area-minimizing with homologous competitors.  We will use it many times without explicitly citing it. 
	\subsection{Planes and matrices}
	We will use $\ov{Gr}(3,6)$ to denote the Grassmannian of oriented $3$-planes in $\R^6.$ The notation $\textnormal{diag}\{a_1,a_2,a_3\}$ denotes the diagonal matrix in $GL(3,\C)$ with diagonal elements consisting of $a_j$ in the $j$-th diagonal term.
	
	Let $P\in GL(3,\C)$ be a matrix. We will also use $P$ to denote the $\R$-span of its rows. We give $P$ the orientation induced by $v_1\w v_2\w v_3,$ with $v_j$ being the $j$-th row. In this way, $P$ also denotes an element in $\ov{Gr}(3,6).$
	
	Note that an element $P\in U(3)$ (or $SU(3)$) denotes an oriented Lagrangian plane (or special Lagrangian, respectively). To see this, by \cite{HLg}, a plane $P$ is Lagrangian (special Lagrangian) if it is the image of the $x_1x_2x_3$-plane under an element in $U(3)$ ($SU(3)$, respectively.) The $x_1x_2x_3$-plane is simply $\id= \textnormal{diag}\{1,1,1\}$ in our notation. The claim follows from $P=\id.P.$
	\subsection{Graph theory}
	For a finite graph we will use $v_j$ to denote its vertices and $E_{k,l}=E_{l,k}$ to denote an edge connecting $v_k$ and $v_l.$ The degree of a vertex $\deg v_j$ is defined the standard way, i.e., the number of edges starting from the vertex $v_j$.	 
\begin{defn}
A map $h$ from a finite graph $G$ to $\R^n$ is called an embedding if ti satisfies the following conditions.
\begin{itemize}
\item $h(v_j)\not= h(v_l)$ for $j\not=l$,
\item $h(E_{j,k})$ is a smooth curve connecting $h(v_j),h(v_k)$for all $j,k$,
\item $h(E_{j,k})\cap h(E_{l,m})$ consists of the image of the common vertex of $E_{j,k},E_{l,m}$, and is empty if $E_{j,k},E_{l,m}$ has no common vertex,
\item and $h(E_{j,k}),h(E_{j,l})$ is not tangential to each other at $h(v_j)$ for $k\not=l$.
\end{itemize}
\end{defn}	
We will frequently abuse the notations, and use $v_l,$ $E_{l,m}$, $G$, etc., to denote both the intrinsic notion of vertices, edges, graphs, and their images into $\R^n.$ $B_r(G)$ denotes a tubular neighborhood of radius $r$ around a graph $G.$
	
	Sometimes we will talk about realizing the pair/collection of a vertex $v_j$ of $G.$ By that we mean a certain collection of planes and a cone whose number of intersecting rays is precisely $\deg v_j$. See Section \ref{exa} and \ref{mexa}.
	\subsection{Lemmas about changing coordinates}
	In this subsection, we collect several useful lemmas about changing coordinates. The readers can skip this section for the read.
	
	Here we first state a general change of coordinates lemma. 	Suppose we have a $3$-dimensional special Lagrangian cone $C\s \R^6$ with an isolated singularity and a $3$-dimensional special Lagrangian plane $P,$ so that $C$ and $P$ intersect only at $k$ rays $\ga_0,\cd,\ga_{k-1}$. Moreover, for each intersecting ray, $P$ is not tangential to $C.$ We will show that $k=1,2,4$ cases exist by Lemma \ref{rays}. Thus, we can write the tangent cone $C_q$ to the union $\llbracket P\rrbracket +\llbracket C\rrbracket $ at any point $q$ on $\ga_j\backslash\{0\}$ as  $\llbracket C_q\rrbracket =\llbracket P\rrbracket +\llbracket T_qC\rrbracket $, with $T_q C$ being the oriented tangent plane to $C$ at $q.$ Moreover, the intersection $P\cap T_q C$ is a line. (This directly follows from the fact that we are taking tangent planes of a cone on rays away from zero). Since $C$ is a cone, one can verify that all such $T_q C$ along the same ray coincide with each other.
	\begin{lem}\label{coc}
		For each $\ga_j,$ there exists an element $h\in SU(3)$ so that after changing coordinates using $h$, we can assume that $T_q C$ is given by the $x_1,x_2,x_3$-plane. $P$ is given by the product of the $x_3$-axis with a 2-d special Lagrangian plane $\pi\s \C^2$, the span of the $x_1,y_1,x_2,y_2$ axes, and $\ga_j$ lies on the nonnegative $x_3$-axis. Alternatively, we can choose $\ga_j$ to lie on the nonpositive $x_3$-axis.
	\end{lem}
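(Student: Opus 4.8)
The plan is to use the Harvey--Lawson description of special Lagrangian planes (\cite{HLg}): $SU(3)$ acts transitively on the set of positively oriented special Lagrangian $3$-planes in $\C^3$ --- this set is the orbit of the coordinate $x_1x_2x_3$-plane with its standard orientation $\pd_{x_1}\w\pd_{x_2}\w\pd_{x_3}$ --- and the stabilizer of this plane is $SO(3)\s SU(3)$, the group of real matrices in $SU(3)$. Since $C$ is a special Lagrangian cone, the chosen point $q\in\ga_j\setminus\{0\}$ is a smooth point of $C$, and its oriented tangent plane $T_qC$ is a positively oriented special Lagrangian plane; hence there is $g\in SU(3)$ carrying $T_qC$, as an oriented plane, onto the $x_1x_2x_3$-plane. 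Because $C$ is a cone, the ray $\R_{\ge 0}q=\ga_j$ lies in $C$ and passes through the smooth point $q$, so $\R q\s T_qC$; thus $g(\ga_j)$ is a ray issuing from the origin inside the $x_1x_2x_3$-plane.

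Next I would use the residual $SO(3)$, which acts transitively on the unit sphere of $\R^3$, to pick $h\in SO(3)$ with $hg(\ga_j)$ equal to the nonnegative $x_3$-axis (and, for the alternative statement, to the nonpositive $x_3$-axis instead). Since $h$ fixes the $x_1x_2x_3$-plane setwise and preserves its orientation, $hg(T_qC)$ is still the standard $x_1x_2x_3$-plane. Changing coordinates by the composite $SU(3)$ transformation, we may therefore assume $T_qC$ is the $x_1x_2x_3$-plane and $\ga_j$ lies on the nonnegative (or nonpositive) $x_3$-axis, with $q$ a nonzero point of that axis. In particular $q\in C\cap P\s P$, and $P$ is a linear subspace, so $P\supset\R q$, i.e., $P$ contains the whole $x_3$-axis.

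It remains to identify $P$. Write $\C^2:=\mathrm{span}\{\pd_{x_1},\pd_{y_1},\pd_{x_2},\pd_{y_2}\}$ and let $\om=\sum_j dx_j\w dy_j$ be the standard symplectic form. Since $P$ is Lagrangian with $\pd_{x_3}\in P$, every $v\in P$ satisfies $0=\om(v,\pd_{x_3})$, which forces $dy_3(v)=0$; hence $P$ lies in the hyperplane $\{y_3=0\}=\C^2\oplus\R\pd_{x_3}$. Counting dimensions, $\pi:=P\cap\C^2$ is $2$-dimensional and $P=\R\pd_{x_3}\oplus\pi$, and restricting $\om$ to $\pi\s P$ shows $\pi$ is isotropic, hence a Lagrangian $2$-plane in $\C^2$. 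Finally, choose an orthonormal basis $(v_2,v_3)$ of $\pi$ so that $(\pd_{x_3},v_2,v_3)$ is positively oriented in $P$; since $dz_3$ annihilates $\C^2$ while $dz_1,dz_2$ annihilate $\pd_{x_3}$, only the terms pairing $dz_3$ with $\pd_{x_3}$ survive, giving $\Re(dz_1\w dz_2\w dz_3)(\pd_{x_3},v_2,v_3)=\Re\big((dz_1\w dz_2)(v_2,v_3)\big)$. As $P$ is special Lagrangian the left side equals $1$, and the special Lagrangian calibration inequality in $\C^2$ --- $|(dz_1\w dz_2)(v_2,v_3)|\le 1$ for orthonormal $v_2,v_3$, with equality exactly on special Lagrangian $2$-planes --- then forces $(dz_1\w dz_2)(v_2,v_3)=1$, so $\pi$ is special Lagrangian in $\C^2$, as desired.

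I expect the only genuine subtlety to be careful bookkeeping with orientations and the distinction between $SU(3)$ and $U(3)$: one must check that a complex-linear $g$ with $\det_{\C}g=1$ preserves both $\phi=\Re(dz_1\w dz_2\w dz_3)$ (because $g^\ast(dz_1\w dz_2\w dz_3)=\det_{\C}(g)\,dz_1\w dz_2\w dz_3$) and the orientation of $\R^6$ (because $\det_{\R}g=|\det_{\C}g|^2>0$), which is precisely why the $SU(3)$-orbit of the coordinate $\R^3$ is the set of \emph{positively} oriented special Lagrangian planes and why its stabilizer is $SO(3)$ rather than $O(3)$. One must also confirm that the element of $SO(3)$ used to place $\ga_j$ does not disturb the normalization of $T_qC$ from the first step, which is automatic since $SO(3)$ fixes the coordinate $\R^3$ together with its orientation. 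The ``alternatively'' clause requires no extra work, as transitivity of $SO(3)$ on $S^2$ lets $\ga_j$ be sent to either ray of the $x_3$-axis with no change to the rest of the argument.
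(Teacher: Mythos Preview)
Your proof is correct and follows the same overall route as the paper: use $SU(3)$-transitivity on special Lagrangian planes to normalize $T_qC$, then the $SO(3)$-stabilizer to place $\ga_j$ on the $x_3$-axis, then analyze $P$. The one substantive difference is in showing $P\subset\{y_3=0\}$. The paper argues by contradiction, assuming an orthonormal basis of $P$ has nonzero $y_3$-components and using a Cauchy--Schwarz estimate to obtain $|\phi(v,w,\pd_{x_3})|<1$, violating the calibration. Your argument is more direct: since $P$ is Lagrangian and contains $\pd_{x_3}$, the identity $\om(\,\cdot\,,\pd_{x_3})=-dy_3$ immediately forces $dy_3|_P=0$. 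Both arguments then finish the same way, reducing to the contraction $\pd_{x_3}\lrcorner\phi=\Re(dz_1\w dz_2)$ on $\C^2$. One small correction: in your final inequality $|(dz_1\w dz_2)(v_2,v_3)|\le 1$, equality holds on all \emph{Lagrangian} $2$-planes, not only the special Lagrangian ones; your conclusion is unaffected, since $\Re z=1$ together with $|z|\le 1$ still forces $z=1$.
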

	\begin{rem}
		The change of coordinate $h$ might be different for each ray $\ga_j$. The same proof can also be used to choose $P$ to be the $y_1y_2x_3$-plane, with $T_qC$ being $\pi\times \R,$ with $\pi$ a $2$-d special Lagrangian plane.
	\end{rem}
	\begin{proof}We recall that $SU(3)$ acts transitively on the special Lagrangian planes while preserving the special Lagrangian form, with the isotropy group being $SO(3)$ at each individual plane by Section 3 in \cite{HLg}. First, use an element of $SU(3)$ to change the coordinate so that $T_p$ is the $x_1x_2x_3$-plane.  Then use an element of $SO(3)$ to align the nonnegative part of $x_3$ with the intersection of $C$ and $P$. (Here we can also align with the nonnegative of the $x_3$-axis. This is a matter of choice.) Note that $P$ is the product of $x_3$-axis with a $2$-plane $\pi$ in the $x_1x_2y_1y_2y_3$-plane. We claim that the $2$-plane is contained in  $x_1x_2y_1y_2$-plane and is special Lagrangian. Suppose the basis of the $2$-plane has components in $y_3.$ Then any orthonormal basis of $P$ will be of the following form $\pd_{x_3},v=\Si_k a_k \pd_{x_k}+\Si_j \ai_j\pd_{y_j},w=\Si_k b_k x_k+\Si_j \be_j\pd_{y_j}$, with $a_3=b_3=0$, $\ai_3,\be_3\not=0.$ Recall that all the coordinate changes we have used so far lie in $SU(3),$ thus preserving the special Lagrangian form. This implies that $P$ is still calibrated by the special Lagrangian form $\phi.$ Thus, $\phi(v,w,\pd_{x_3})$ is either $1$ or $-1$, since we do not know the orientation of the basis we choose. However, 
		\begin{align*}
			&	|\phi(v,w,\pd_{x_3})|\\=&|\phi(a_1\pd_{x_1},b_2\pd_{x_2},\pd_{x_3})+\phi(a_2\pd_{x_2},b_1\pd_{x_1},\pd_{x_3})\\&+\phi(\ai_1\pd_{y_1},\be_2\pd_{y_2},\pd_{x_3})+\phi(\ai_2\pd_{y_2},\be_1\pd_{y_1},\pd_{x_3})|\\=&|a_1b_2-a_2b_1-\ai_1\be_2+\ai_2\be_1|\\\le&\sqrt{(a_1^2+a_2^2+\ai_1^2+\ai_2^2)(b_1^2+b_2^2+\be_2^2+\be_1^2)}\\
			=&\sqrt{(1-\ai_3^2)(1-\be_3^2)}\\
			<&1,
		\end{align*}which is a contradiction.
		Thus, we deduce that $P=\pi\times x_3$-axis, with $\pi\s \C^2.$ $\pi$ must also be special Lagrangian, since the interior product \begin{align*}
			\pd_{x_3}\res \phi=&\pd_{x_3}\res dx_1dx_2dx_3-\pd_{x_3}\res dy_1dy_2dx_3-\pd_{x_3}\res dy_1dx_3dy_3-\pd_{x_3}\res dx_1dy_2dy_3\\=&dx_1dx_2-dy_1dy_2,
		\end{align*}is precisely special Lagrangian form on $\C^2.$ ($\Re dz_1\w dz_2=\Re(dx_1dx_2-dy_1dy_2+i(dx_1dy_2+dy_1dx_2))=dx_1dx_2-dy_1dy_2.$)
	\end{proof}
	\begin{lem}\label{cocp}
		Suppose we have two pairs of distinct special Lagrangian planes $P_1\cup P_2,$ and $P_3\cup P_4,$ with $P_1\cap P_2$ and $P_3\cap P_4$ both $1$-dimensional. Then there exists $h_1,h_3\in SU(3)$ so that $h_1(P_2)$ and $h_3(P_4)$ both become the $y_1y_2x_3$-plane. Moreover, $h_1(P_1\cap P_2)=h_3(P_3\cap P_4)=x_3$-axis. For some $\rh_j\in\R,$ $h_1(P_1),h_3(P_3)$ can be parametrized as 
		\begin{align}\label{gpm}
			h_j(P_j)=\{(x_1,x_2,x_3,\rh_jx_1,-\rh_jx_2,0)|x_1,x_2,x_3\in\R\}=\gr\na \frac{\rh_j(x_1^2-x_2^2)}{2}.
		\end{align}
	\end{lem}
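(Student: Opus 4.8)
The plan is to run, for each of the two pairs separately, the normalization used in the proof of Lemma~\ref{coc}, followed by one residual rotation. Recall (Section~3 of \cite{HLg}) that $SU(3)$ acts transitively on special Lagrangian planes, with isotropy group $SO(3)$ acting as the full rotation group of each such plane. First I would pick $g\in SU(3)$ carrying $P_2$ to the $y_1y_2x_3$-plane; then $g(P_1\cap P_2)$ is a line inside $g(P_2)$, and postcomposing with an element of the residual $SO(3)$ (the isotropy of $g(P_2)$) I may rotate it onto the $x_3$-axis. Call the composition $h_1$. The same recipe applied to $P_3\cup P_4$ produces $h_3$ with $h_3(P_4)$ the $y_1y_2x_3$-plane and $h_3(P_3\cap P_4)$ the $x_3$-axis. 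It then remains to show that $h_1(P_1)$ (and, by the identical argument, $h_3(P_3)$) is automatically of the form \eqref{gpm} after one further harmless rotation.

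Set $Q:=h_1(P_1)$, a special Lagrangian $3$-plane with $\pd_{x_3}\in Q$. Since $Q$ is Lagrangian, $\pd_{y_3}=J\pd_{x_3}$ is orthogonal to $Q$, so $Q$ lies in the $5$-space spanned by $\pd_{x_1},\pd_{x_2},\pd_{x_3},\pd_{y_1},\pd_{y_2}$. As $Q$ is $3$-dimensional it meets the $4$-dimensional $\C^2$ spanned by the $x_1,y_1,x_2,y_2$ axes in dimension at least $2$, and since $\pd_{x_3}\in Q\setminus\C^2$ the intersection $V:=Q\cap\C^2$ is exactly a $2$-plane, with $Q=\R\pd_{x_3}\oplus V$; moreover $V$ is Lagrangian in this $\C^2$. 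Exactly as in the proof of Lemma~\ref{coc}, $\pd_{x_3}\res\phi=dx_1dx_2-dy_1dy_2$ is precisely the special Lagrangian form of this $\C^2$, so $V$ is special Lagrangian there. Finally, $Q\cap(y_1y_2x_3\text{-plane})$ being only the $x_3$-axis forces $V$ to meet the $y_1y_2$-plane trivially, hence to be the graph over the $x_1x_2$-plane of a real $2\times2$ matrix $S$; Lagrangianity makes $S$ symmetric, and the special condition $\Im\det(\mathrm{Id}+iS)=0$ makes $S$ traceless, so $S=\left(\begin{smallmatrix}a&b\\ b&-a\end{smallmatrix}\right)$. The hypothesis $P_1\ne P_2$ rules out $V$ being the $y_1y_2$-plane, so this description is never vacuous.

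To finish I would pin down the leftover freedom. The subgroup of $SU(3)$ fixing both the $y_1y_2x_3$-plane and the $x_3$-axis is, after conjugating the computation back through $\dia\{i,-i,1\}$ (which sends the $x_1x_2x_3$-plane to the $y_1y_2x_3$-plane and fixes the $x_3$-axis), exactly the group of real rotations $R_\alpha$ of the $x_1x_2$-plane acting $\C$-linearly, hence simultaneously rotating the $y_1y_2$-plane. Conjugating $S$ by $R_\alpha$ rotates the pair $(a,b)$ by the double angle $2\alpha$, so choosing $\alpha$ suitably gives $b=0$ and $a=:\rh_1\in\R$; then $V=\{(x_1,x_2,\rh_1x_1,-\rh_1x_2)\}$ and $Q=\{(x_1,x_2,x_3,\rh_1x_1,-\rh_1x_2,0)\}=\gr\na\tfrac{\rh_1(x_1^2-x_2^2)}{2}$, which is \eqref{gpm}. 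As $R_\alpha$ preserves both the $y_1y_2x_3$-plane and the $x_3$-axis, replacing $h_1$ by $R_\alpha\circ h_1$ keeps the earlier normalizations intact; performing the same step with $h_3$ yields $\rh_3$, completing the proof.

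The step I expect to carry the real content, as opposed to bookkeeping, is the identification of this residual symmetry group together with the fact that conjugation by it acts on the traceless symmetric matrix $S$ as the double-angle rotation of $(a,b)$: this is exactly what guarantees we have neither too little nor too much freedom — enough to annihilate the off-diagonal entry $b$, yet not so much as to undo the normalization of $P_2$ and of the intersection line. The remaining ingredients are parallel to Lemma~\ref{coc} or are routine dimension and interior-product computations.
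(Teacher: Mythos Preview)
Your proof is correct and follows the same overall strategy as the paper: normalize $P_2$ (resp.\ $P_4$) to the $y_1y_2x_3$-plane and the intersection line to the $x_3$-axis via $SU(3)$ transitivity and its $SO(3)$ isotropy, reduce the remaining plane to a special Lagrangian $2$-plane $V\subset\C^2$, and then use the residual $SO(2)\subset SU(2)\subset SU(3)$ to put $V$ into the form \eqref{gpm}. The only difference is in how the $\C^2$ step is executed. The paper changes to the complex coordinates $w_1=x_1+ix_2$, $w_2=y_1-iy_2$ (citing \cite{DJ}), in which special Lagrangian $2$-planes are complex lines $w_2=e^{2i\lambda}\rho\,w_1$, and the $SO(2)$ rotation $\dia[e^{-\lambda},e^{\lambda}]$ kills the phase $e^{2i\lambda}$; you instead write $V$ directly as the graph of a symmetric traceless $2\times2$ matrix $S$ and diagonalize $S$ by $SO(2)$ conjugation (the double-angle action on $(a,b)$). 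These are two descriptions of the same object and the same final rotation, so the arguments are interchangeable; yours is a bit more self-contained, while the paper's makes the link to the standard classification of special Lagrangian $2$-planes in $\C^2$ explicit.
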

	\begin{proof}
Several consecutive rotations are used to construct $h_j.$ In order to simplify the presentation, we will not use new notations for the image of $P_l$ under each rotation.		By the remark under \ref{coc}, one can first use $SU(3)$ rotations to make $P_2$ and $P_4$ to align with the $y_1y_2x_3$-plane. Moreover, $P_1$ and $P_3$ are both products of the $x_3$-axis with a special Lagrangian plane in $\C^2$, the $x_1y_1x_2y_2$-plane. Use $P_j'$ to denote the factor of $P_j$ that splits off the $x_3$-axis.  By Section 2.1 in \cite{DJ}, $P_2'$, $P_4'$ are complex planes in the complex coordinate $w_1=x_1+ix_2,w_2=y_1-iy_2$ in the $x_1y_1x_2y_2$-plane. Note that they are distinct from $P_2,P_4$, which coincide with the $w_2$-plane. Thus, $P_1,P_3$ can be parametrized as follows 
		\begin{align*}
			P_j'=\{w_2=e^{2\lam_ji}\rh_jw_1|\rh_jw\in\R_{>0},\lam_j\in \R\}.
		\end{align*}
		Consider the following change of coordinate $\textnormal{diag}[e^{-\lam_j},e^{\lam_j}]$ in $(w_1,w_2)$ coordinate. Then $P_j'$ becomes $\{w_2=\rh_jw_1\}$ in the new parametrization, which corresponds to $\{y_1=\rh_j x_1,y_2=-\rh_j x_2\}$. Direct calculation shows that $\textnormal{diag}[e^{-\lam_j},e^{\lam_j}]$ corresponds to
		\begin{align*}
			\begin{pmatrix}
				\cos\lam_j & \sin\lam_j \\
				-\sin\lam_j & \cos\lam_j
			\end{pmatrix}
		\end{align*}in the original $(x_1+iy_1,x_2+iy_2)$ coordinate. Thus using this rotation in $SO(2)\s SU(2)\s SU(3),$ now we can parametrize $P_1,P_3$ as $P_j'\times x_3$-axis, with
		\begin{align*}
			P_j'=\{(x_1,x_2,\rh_jx_1,-\rh_jx_2)\}.
		\end{align*} Being a product implies invariance along $x_3$-direction. Since the $y_3$-component of $P_j'$ is zero, we deduce the parametrization of $P_j$ in the prompt.
	\end{proof}
	Now, we can prove a much stronger version of Lemma \ref{coc}
	\begin{lem}\label{cocf}
		Suppose we have two pairs of special Lagrangian planes and cones $C_1\cup P_1$ and $C_2\cup P_2,$ with $C_1\cap P_1,C_2\cap P_2$ both being nontrivial unions of rays. For any two selected rays $\ga_1\s C_1\cap P_1,\ga_2\s C_2\cap P_2$, there exist two $SU(3)$ rotations acting on $C_1\cup P_1$ and $C_2\cup P_2,$ respectively, so that after the rotations $P_1$ and $P_2$ become $P,$ the $y_1y_2x_3$-plane, and the tangent plane to any nonzero point on $\ga_1,$ $\ga_2$ of the cones can be parameterized as formula (\ref{gpm}). Moreover, $\ga_1$ and $\ga_2$ are contained in the $x_3$ axis and point in opposite directions.
	\end{lem}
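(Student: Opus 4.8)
The plan is to reduce Lemma \ref{cocf} to the two preceding lemmas by applying them in the right order and then fixing up the residual freedom. The key observation is that Lemma \ref{coc} already lets us normalize each pair $C_j\cup P_j$ separately along its chosen ray $\ga_j$: there is $h_j\in SU(3)$ so that $h_j(P_j)$ is the $y_1y_2x_3$-plane, $h_j(\ga_j)$ lies on the $x_3$-axis, and $T_q h_j(C_j)$ (which is well-defined and ray-constant, as noted in the discussion before Lemma \ref{coc}) is of the form $\pi_j\times(x_3\text{-axis})$ with $\pi_j\s\C^2$ a $2$-dimensional special Lagrangian plane in the $x_1y_1x_2y_2$-coordinates. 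First I would invoke the remark under Lemma \ref{coc} in exactly this "$P=y_1y_2x_3$-plane" form for both indices $j=1,2$, choosing in addition — via the $SO(3)$ freedom in Lemma \ref{coc}'s proof — to put $\ga_1$ on the nonnegative $x_3$-axis and $\ga_2$ on the nonpositive $x_3$-axis, which secures the last sentence of the statement.

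Next I would upgrade the normalization of the tangent planes $T_q C_j$ to the explicit graphical form (\ref{gpm}). After the previous step, the two planes $P_j=y_1y_2x_3$-plane and $T_q C_j=\pi_j\times(x_3\text{-axis})$ form a pair of distinct special Lagrangian planes meeting in the $1$-dimensional $x_3$-axis — distinct because $T_q C_j$ is not tangential to $C_j$ along $\ga_j$ by hypothesis, so in particular it does not equal $P_j$. This is precisely the input of Lemma \ref{cocp}, applied with $(P_3,P_4)$ there being $(T_qC_j,P_j)$ here. That lemma produces an $SU(3)$ rotation fixing the $y_1y_2x_3$-plane and the $x_3$-axis and carrying $T_q C_j$ to the graph $\gr\na\tfrac{\rh_j(x_1^2-x_2^2)}{2}$ in (\ref{gpm}). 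Since this rotation is built (in the proof of Lemma \ref{cocp}) from a $\mathrm{diag}[e^{-\lam_j},e^{\lam_j}]$ acting in the $(w_1,w_2)$-coordinates, hence an $SO(2)\s SU(2)\s SU(3)$ block that is the identity on the $x_3$-coordinate, it preserves both $P_j$ being the $y_1y_2x_3$-plane and $\ga_j$ sitting on the $x_3$-axis with the chosen sign. Composing it with $h_j$ gives the desired $SU(3)$ rotation for the pair $C_j\cup P_j$.

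The one point that requires care — and the step I expect to be the main obstacle — is checking that nothing clashes: applying Lemma \ref{cocp}'s rotation to straighten $T_q C_j$ must not destroy the normalizations already achieved in the first step, and the sign of $\ga_j$ on the $x_3$-axis must survive. This is why it matters that the relevant $SU(3)$ element lies in the $SU(2)$ block acting on the first two complex coordinates and acts trivially on $z_3=x_3+iy_3$: it fixes the $x_3$-axis pointwise (hence the ray and its direction) and fixes the $y_1y_2x_3$-plane setwise (it preserves the $w_2=x_3$... — more precisely it is the identity on the $x_3$-factor and an $SU(2)$ rotation on $\C^2$, under which the $y_1y_2$-plane, being the $w_2$-plane, is preserved as computed in Lemma \ref{cocp}'s proof). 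I would also remark that the two rotations for $j=1$ and $j=2$ are entirely independent — each pair is normalized in its own copy of $\C^3$ — so there is no compatibility condition to verify beyond the single ambient convention that $\ga_1,\ga_2$ point oppositely, which we arranged by the sign choice in Lemma \ref{coc}. Assembling these observations yields the lemma.
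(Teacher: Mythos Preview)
Your proof is correct and follows essentially the same route as the paper: reduce to Lemma \ref{cocp} applied to the pairs $(T_qC_j,P_j)$, and handle the orientation of $\ga_1,\ga_2$ separately. The only difference is cosmetic: you fix the directions of $\ga_1,\ga_2$ \emph{first}, via the sign choice in Lemma \ref{coc}, and then check that the residual $SO(2)\subset SU(2)$ rotation from Lemma \ref{cocp}'s proof acts trivially on $z_3$ and hence preserves both the $y_1y_2x_3$-plane and the chosen ray; the paper instead applies Lemma \ref{cocp} as a black box and fixes the direction \emph{afterwards} with $\textnormal{diag}[-1,1,-1]\in SO(3)$, noting that this element preserves both $P$ and the graphical form (\ref{gpm}). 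Both orderings work and require the same verification effort.
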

	\begin{proof}
		Apply Lemma \ref{cocp} to the tangent planes of $C_1\cup P_1$ and $C_2\cup P_2,$ at any nonzero point, we get the parametrization (\ref{gpm}). If $\ga_1,\ga_2$ are already of opposite directions, then we are done. If $\ga_1$ coincide with $\ga_2$, use the following $SO(3)$ rotation $\textnormal{diag}[-1,1,-1]$. Note that this preserves both $P$ and the parametrized tangent planes of the form (\ref{gpm}). We are done.
	\end{proof}
	\begin{rem}
		For different cones, even after all the rotations in \ref{cocf}, the parametrized tangent planes of the form (\ref{gpm}) might not coincide with each other, i.e., having different $\rh_j$. When we are creating one line segment of singularities, we are reflecting one fixed cone, so we do not encounter this problem. The added steps in Section \ref{rott} is mainly to address these non-aligned tangent planes.
	\end{rem}
	\subsection{Examples of special Lagrangian cones intersecting a special Lagrangian plane at only 1 ray}\label{exa}
	We will record here the examples of special Lagrangian cones and planes that intersect at only one ray. The cone $C$ is the cone over the torus $F^2=\{|z_1|=|z_2|=|z_2|,z_1z_2z_3=\frac{1}{3\sqrt{3}}\}\s S^5.$ This cone first appeared in Theorem 3.1 in \cite{HLg}. Note that $F^2$ is the orbit of the diagonal subgroup of $SU(3)$ acting on $(\frac{1}{\sqrt{3}},\frac{1}{\sqrt{3}},\frac{1}{\sqrt{3}}).$ Thus, for general calculations it suffices to consider the point $(\frac{1}{\sqrt{3}},\frac{1}{\sqrt{3}},\frac{1}{\sqrt{3}})$ by invariance. The tangent plane $\pi_0$ to that point is generated by $(1,1,1),(i,0,-i),(0,-i,i).$ Apply the real Gram-Schmidt algorithm to this frame in $\R^6$ to get an orthonormal frame in the standard Euclidean metric that spans the same tangent space. (The complex notation might be a little confusing. Note that we are not using the Hermitian inner product and the complex linear Gram-Schmidt.) Abusing the notation, we have
	\begin{align*}
		\pi_0=	\left(
		\begin{array}{ccc}
			\frac{1}{\sqrt{3}} & \frac{1}{\sqrt{3}} & \frac{1}{\sqrt{3}} \\
			\frac{i}{\sqrt{2}} & 0 & -\frac{i}{\sqrt{2}} \\
			\frac{i}{\sqrt{6}} & -i \sqrt{\frac{2}{3}} & \frac{i}{\sqrt{6}} \\
		\end{array}
		\right),
	\end{align*}where each row of $\pi_0$ represents a vector in the orthonormal basis. We can verify that $\pi_0\in SU(3)$ as a matrix. Now, we change $\pi_0$ by multiplying on the left with $$\rh(\tau,\ta)=\left(
	\begin{array}{ccc}
		1 & 0 & 0 \\
		0 & e^{i \ta} \cos (\tau) & e^{i \ta} \sin
		(\tau) \\
		0 & -e^{-i \ta} \sin (\tau) & e^{-i \ta}
		\cos (\tau) \\
	\end{array}
	\right),$$ where $\ta,\tau\in [-\pi,\pi].$
	\begin{defn}\label{planep}
		Define the planes $$P(\tau,\ta)=\rh(\tau,\ta).\pi_0$$ to be generated by rows of the product. 
	\end{defn}Note that $\rh(\tau,\ta)\in SU(3),$ since its lower right block is a composition of an $SO(2)$ rotation and a diagonal $SU(2)$ rotation. 
	\begin{rem}
		\textbf{Warning!} Technically speaking we are using rows to represent vectors, so linear transform should be matrix multiplication on the right. However, we wish to rotate basis vectors $v_j$ into $e^{ia_j}v_j$ with distinct $a_j$ corresponding to distinct $j.$ Thus, we use the relatively strange way of multiplying on the left. 
	\end{rem}	
	\begin{lem}\label{rays}For several special values of $(\tau,\theta),$ we have the corresponding number of intersecting rays of $P(\tau,\ta)$ with $C$ as in the following table. Moreover, all of the intersecting rays formed are not symmetric with respect to the origin by definition of $F.$
		\begin{table}[h]
			\centering
			$\begin{array}{ ccccc }
				\toprule
				\text{Number of intersecting rays} & \text{Value of } (\tau,\ta)\\
				\midrule
				1 & (0,0) \\
				1 & (0,\frac{\pi}{6}),(0,\frac{\pi}{4}),(0,\frac{\pi}{3}) \\
				2 &(\frac{\pi}{4},\frac{\pi}{4}),(\frac{\pi}{4},\frac{\pi}{3})\\
				4 & (0,\frac{\pi}{2}) \\
				\bottomrule
			\end{array}$
		\end{table}
	\end{lem}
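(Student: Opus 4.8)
The plan is to translate the ray count into counting points of $F^2\cap P(\tau,\ta)$. Since $C$ is the cone over $F^2\s S^5$ and $P(\tau,\ta)$ is a linear $3$-plane, $C\cap P(\tau,\ta)$ is the cone over $F^2\cap (P(\tau,\ta)\cap S^5)=F^2\cap P(\tau,\ta)$, and distinct points of this finite set correspond bijectively to distinct intersecting rays. The assertion that no intersecting ray is symmetric with respect to the origin is immediate and uniform in $(\tau,\ta)$: every nonzero $q\in C$ has $q_1q_2q_3\in\R_{>0}$ by the definition of $F^2$, so $(-q_1)(-q_2)(-q_3)<0$ and $-q\notin C$. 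Note also that the first row of $\rh(\tau,\ta).\pi_0$ is $\tfrac1{\sqrt3}(1,1,1)$, so the ray through $(1,1,1)$ always lies in $C\cap P(\tau,\ta)$; the work is to find the remaining rays.

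Next I would make the membership condition explicit. Writing out the rows of $\rh(\tau,\ta).\pi_0$ gives $w_1=u_1:=\tfrac1{\sqrt3}(1,1,1)$, $w_2=ie^{i\ta}u_2$, $w_3=ie^{-i\ta}u_3$, where $u_2=\cos\tau\,s_2+\sin\tau\,s_3$ and $u_3=-\sin\tau\,s_2+\cos\tau\,s_3$ with $s_2=(\tfrac1{\sqrt2},0,-\tfrac1{\sqrt2})$, $s_3=(\tfrac1{\sqrt6},-\sqrt{2/3},\tfrac1{\sqrt6})$; one checks $u_1,u_2,u_3$ is an orthonormal real basis of $\R^3$. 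Since $\rh(\tau,\ta).\pi_0\in SU(3)$, $P(\tau,\ta)$ is special Lagrangian, hence Lagrangian, so $q\in P(\tau,\ta)$ iff $\la q,w_k\ra_\C\in\R$ for $k=1,2,3$. Parametrizing a point of $F^2$ as $q=\tfrac1{\sqrt3}(e^{i\ai_1},e^{i\ai_2},e^{i\ai_3})$ with $\ai_1+\ai_2+\ai_3\in 2\pi\Z$, this is equivalent to the three real trigonometric equations
\begin{align*}
\sin\ai_1+\sin\ai_2+\sin\ai_3&=0,\\
\sum_{j}(u_2)_j\cos(\ai_j-\ta)&=0,\\
\sum_{j}(u_3)_j\cos(\ai_j+\ta)&=0,
\end{align*}
one such system for each of the listed pairs $(\tau,\ta)$.

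Then I would solve these systems, using the substitution $\ai_1=\vp+\ps,\ \ai_3=\vp-\ps$ (so $\ai_2=-2\vp$) to convert sums of trigonometric functions into products. For $\tau=0$ the second equation collapses to $\cos(\ai_1-\ta)=\cos(\ai_3-\ta)$, forcing $\ai_1=\ai_3$ or $\ai_1+\ai_3=2\ta$; the first equation then leaves finitely many candidate triples, and the third equation discards all but $\ai=(0,0,0)$ in the cases $(0,0),(0,\tfrac\pi6),(0,\tfrac\pi4),(0,\tfrac\pi3)$, giving one ray. For $(0,\tfrac\pi2)$ the three equations together force every $\sin\ai_j=0$, so the solutions compatible with $\ai_1+\ai_2+\ai_3\in 2\pi\Z$ are $(0,0,0),(\pi,\pi,0),(\pi,0,\pi),(0,\pi,\pi)$ — the four rays through $(\pm1,\pm1,\pm1)$ with an even number of minus signs. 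For $\tau=\tfrac\pi4$ one has the symmetry $(u_3)_j=(u_2)_{\si(j)}$ with $\si=(1\,3)$; taking the sum and difference of the last two equations and eliminating $\ai_2$ through the first reduces the system to a single trigonometric equation in one variable with exactly two solutions, which give precisely the two rays claimed for $(\tfrac\pi4,\tfrac\pi4)$ and $(\tfrac\pi4,\tfrac\pi3)$.

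The main obstacle is the $\tau=\tfrac\pi4$ pair: unlike the $\tau=0$ rows, the three equations do not decouple, and one must genuinely solve the coupled trigonometric system and rule out solutions beyond the two exhibited rays. Even for $\tau=0$ the "one ray" conclusions are not formal substitutions, since they require checking that each extra candidate triple produced by the first two equations violates the third; I would collect these finite verifications into a computational lemma in the appendix.
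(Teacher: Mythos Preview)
Your proposal is correct and follows the same overall strategy as the paper: reduce the ray count to the finite set $F^2\cap P(\tau,\ta)$, then solve the resulting algebraic system case by case, with the routine verifications relegated to a computational appendix. The one substantive difference is the choice of parametrization. The paper parametrizes the \emph{plane} by real coefficients $(a,b,c)$ along the rows of $\rh(\tau,\ta).\pi_0$, substitutes into the defining equations $z_1z_2z_3=\tfrac{1}{3\sqrt3}$, $|z_j|^2=\tfrac13$, and hands the resulting polynomial system to Mathematica. You instead parametrize the \emph{torus} $F^2$ by phases $(\ai_1,\ai_2,\ai_3)$ with $\sum\ai_j\equiv0$ and use the Lagrangian membership criterion $\la q,w_k\ra_\C\in\R$ to obtain three real trigonometric equations. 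Your choice makes the $(0,\tfrac{\pi}{2})$ case fully transparent (the equations immediately force $\sin\ai_j=0$, giving exactly the four tetrahedral rays) and cleanly isolates the $\tau=0$ branching; the paper's choice is more uniform but less illuminating, since every case is simply fed to the computer. Both routes end in the same place: a finite list of candidate solutions whose exhaustiveness for the harder $\tau=\tfrac{\pi}{4}$ cases requires either a careful hand argument or symbolic verification.
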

	The proof is straightforward calculations and will be verified using Mathematica. The details are left to Appendix \ref{mat}.
	\begin{rem}
		Note that the tangent plane $\pi_0$ (thus $\ta=\tau=0$) only intersects $C$ along one ray. By invariance, this implies that all the other intersection types listed above cannot be tangential to the cone. Moreover, the plane $P(0,\frac{\pi}{2})$ intersects $C$ along a configuration of the four rays from the center of a regular tetrahedron to its four vertices.
	\end{rem}
	\begin{rem}\label{remr}
		Let $v_1,v_2,v_3$ denote the three rows of $\pi_{0}$. Then changing coordinate using $\pi_0\m\in SU(3),$ we can set $v_j=\pd_{x_j}.$ Then $v_1,e^{i\ta}v_2,e^{-i\ta}v_3$ generates $P(0,\ta).$ The coordinate change diag$\{-1,1,-1\}$ (acting on the left) we use in the next section corresponds to sending $v_1$ to $-v_1$, $v_2$ to $v_2$, $v_3$ to $-v_3$ and then extending complex linearly. This actually shows that $P(0,\ta)$ will always be invariant under the change of coordinates we use in the next section. 
	\end{rem}
		\subsection{Remarks on Zhang's gluing constructions in \cite{YZa}}\label{zhang}
	We will make references to Zhang's gluing constructions in the proof of the main theorems. By this we mainly mean the proof of Theorem 4.6 in \cite{YZa}. Zhang's work is very flexible and powerful. He essentially proves the following: suppose we have a calibrated current $T$ in $B_{\e}(0),$ whose singular set is compactly contained in $B_{\frac{\e}{2}}(0),$ then we can glue any smooth compact surface to which transition smoothly to $T$ in the annulus $B_{\e}(0)\backslash B_{\frac{\e}{2}}(0),$ while keeping the newly minted surface still calibrated in a neighborhood, as long as the orientation of $T$ and the glued surface coincides. 
	
	The idea is very simple. The calibration on the newly glued part comes from the natural calibration in the normal bundle of the surface. For the version we use, the detailed proof is contained in Section 2.6 (especially Lemma 2.7) in \cite{ZL1}. 	
	\section{Creating one arc of singularities}\label{ols}
	In this section, we will first do the local argument to produce calibrated minimal surfaces with one arc of singularity (Proposition \ref{oss} below). 
	%	\begin{thm}\label{oas}		There exists a 3-dimensional calibrated integral current $\Si$ in a smooth region $O$ in $\R^6$ equipped with a Riemannian metric $g,$ so that the singular set of $\Si$ is an arc in the interior of $O.$ Moreover, $\pd \Si\s \pd O$. The metric $g$ coincides with the standard metric $\de$ away from $O$ and $\Si$ is the sum of two irreducible pieces.	\end{thm}	
	
	Now suppose $C$ is a special Lagrangian cone and $P$ is a special Lagrangian plane so that $C\cap P$ consists of just one ray $\ga^+$. (For example, we take $
	\tau=0,\ta=\frac{\pi}{4}$ in Lemma \ref{rays}.)
	
	We will do the construction in four steps. The first two steps boil down to gluing $C\cup P$ to a reflect copy to produce the candidate surface. The next two steps produce the calibration form and the metric. 
	
	\begin{figure}[h]
		\centering
		\includegraphics[width=1\linewidth]{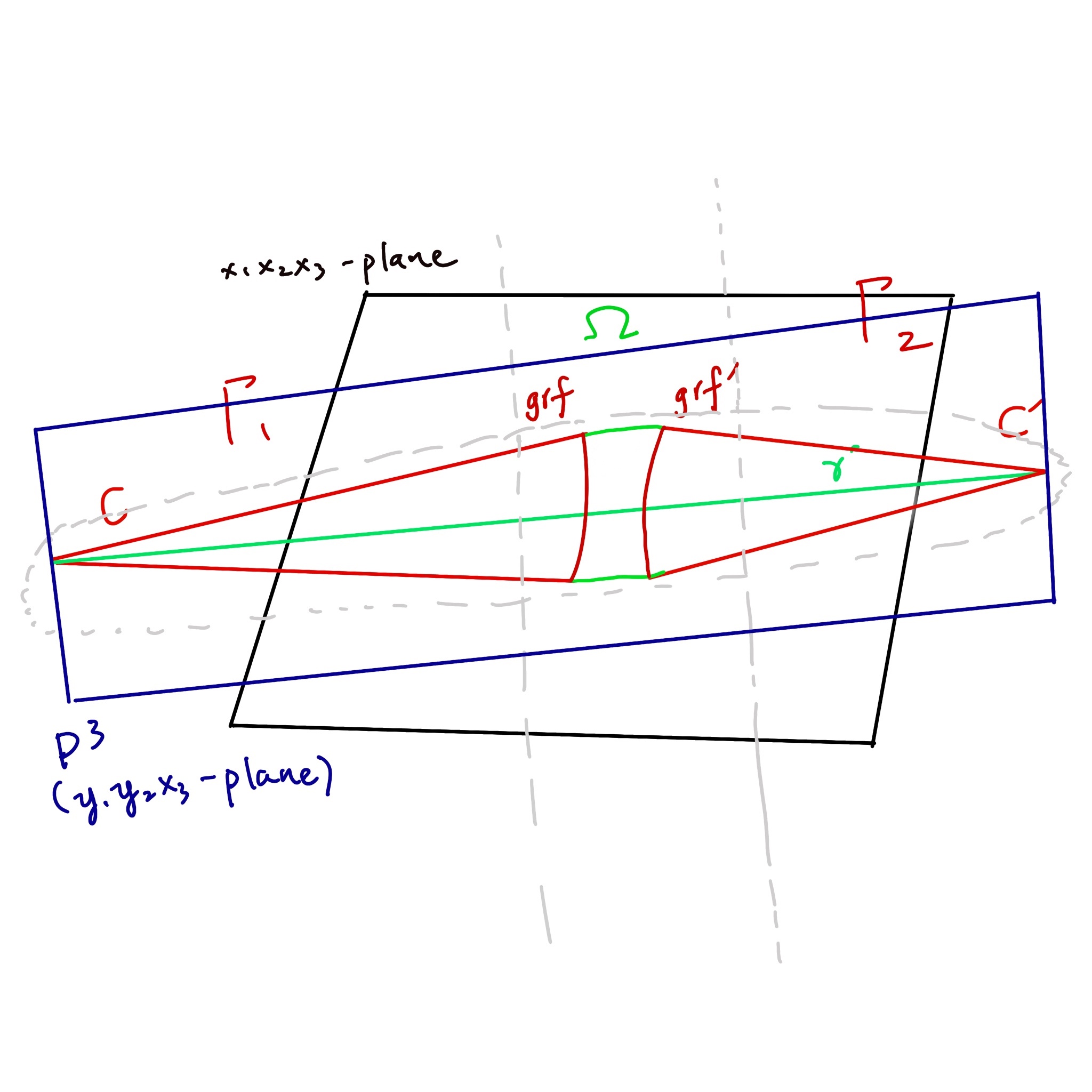}
		\caption[Gluing of the cones]{Gluing of the cone $C$ to its reflected copy $C'$}
		\label{fig:graph-gluing}
	\end{figure}
	\subsection{Step 1  Choice of the preferred coordinate and the reflection}
	Let $q$ be a point on $C\cap P\setminus\{0\}$. We can use Lemma \ref{coc} to change the coordinates to make $T_q C$ the $x_1x_2x_3$-plane and $P=\pi\times x_3$-axis, with $\pi\s \C^2$ special Lagrangian.
	
	We first reflect $C$ along the $y_1$ axis at $0$, i.e., sending $\pd_{y_1}$ to $-\pd_{y_1},$ then $x_1$-axis at $0$, then $y_3$-axis at $0$, and finally along $x_3$-axis at some point $p=(0,0,p_3,0,0,0)$ with $p_3>0$ to get another cone $C'$. Note that the cone point of $C'$ is $2p_3.$ This sequence of reflections preserves the special Lagrangian form since the differential of the affine transform is simply $\textnormal{diag}[-1,1,-1]\in SU(3)$. Let the special Lagrangian current $\llbracket C'\rrbracket $ to be the pushforward of $\llbracket C\rrbracket $ under these reflections. Note that $T_qC$ is naturally invariant under the reflections. $P$ is also invariant under the four reflections, as in Remark \ref{remr}. We deduce that the intersection of $C'$ with $P$ is also a ray contained in the $x_3$-axis. Call it $\ga^-.$ Also, since the sequence of reflections preserves the $x_1x_2x_3$-plane, we have that $C'$ not only intersect the $x_1x_2x_3$-plane but is also tangent to the $x_1x_2x_3$-plane along $\ga^-.$
	
	Moreover, since $\ga$ coincides with the nonnegative part of the $x_3$-axis, after the reflections, $\ga^-$ is the intersection of the $x_3$-axis with $\{x_3\le 2p_3\}.$ Consider the intersecting line segment of $\ga$ and $\ga^-$, the reflected ray. Call this line segment $\ga$. ($\ga$ is the intersection of the $x_3$-axis with the region $\{0\le x_3\le 2p_3\}$.) It starts at the cone point of $C$ and ends at the cone point of $C'.$ The middle point is $p_3,$ the reflection base point. Now, we focus on a tubular neighborhood of $\ga$ in $\R^6.$

 We can suppose the tubular neighborhood $B_r(\ga)$ has a very small radius $r,$ so that in $\{r_0\le x_3\le 2p_3-r_0\}$, $C$ and $C'$ can be represented by two graphs of $f,f':\R^3\to \R^3$ over the $x_1,x_2,x_3$-plane. This follows, e.g., by considering parameterizing $C$ near $r_0$ over its tangent plane at $r_0$ as a graph in the normal bundle and then using $1$-homogeneity to extend the graph. See Remark \ref{rn} for a detailed calculation. Similarly for $C'.$ (Note that in this process simply by taking $r$ small, we can make $r_0$ small as well. Also, if we pull apart the two cones, i.e., making $p_3$ larger, then the same $r$ and $r_0$ work. ) 

Thus, $f$ and $f'$ are natural reflections of each other with respect to $x_3,y_3,x_1,y_1$-axes at $p.$ We use $\Om$ to denote a smaller part of this graphical region $B_r(\ga)\cap\{2r_0\le x_3\le 2p_3-2r_0\}$, (smaller here to transition between the graphs), and the remaining two components of $B_r(\ga)\backslash \Om$ as $\Ga_1$ and $\Ga_2$, with $0\in \Ga_1.$
	\subsection{Step 2 Glue the two cones together}
	We glue $\Ga_1\cap (C\cup P)$ and $\Ga_2\cap(C'\cap P)$ together in $\Om.$ by smooth transition between $f$ and $f'.$ In other words we glue the graph of $f$ and $f'$ by using a cut-off function $\vp$ that depends only on $x_3$. (In other words $\vp$ is $1$ for $x_3\le\frac{3}{2}r_0$ , $0$ for $x_3\ge2p_3-\frac{3}{2}r_0$ and monotonically decreasing in between.) First, by Lemma 2.2 in \cite{HLg}, $f$ can be realized as the gradient of a potential $F$ for $r$ small enough, and thus $f'$ can be realized as the gradient potential of $F'$, the reflection of $F.$ Now we bridge the potentials to be $\ov{F}=\vp F+(1-\vp)F'.$ Note that both $C$ and $C'$ intersects $P$ only along the $x_3$-axis, and have tangents precisely the $x_1,x_2,x_3$-plane. This translates to the fact that $F=0,\na F=0,\textnormal{Hess} F=0$ along $\ga.$ (Here $F=0$ actually is a choice instead of given. We can only deduce $F$ being constant along $\ga.$)  By reflection, the same holds for $F'.$ Now, $$\na\ov{F}=(F-F')\na\vp+\vp(\na F-\na F')+\na F', $$ and $$d\na \ov{F}=(F-F')d\na \vp +\na \vp(dF-dF')+d\vp(\na F-\na F')+\vp(d\na F-d\na F')+d\na F'.$$ This implies that $\na\ov{F}=0$ and $\textnormal{Hess}\ov{F}=0$ along $\ga.$ Thus, the graph of $\ov{F}$ intersects $P$ along $\ga$ and the tangent planes to $P$ at these points are also precisely $x_1x_2x_3$-plane. (It might be the case that $\gr \na \ov{F}$ intersect $P,$ also a graph, at more than $\ga.$ However, since the two have fixed distinct tangents along $\ga$, by making $r_0$ small we can always achieve this.)
	\begin{rem}\label{flat}
		Instead of choosing $F'$ to be a reflected copy of $F$, if one chooses $F'\equiv 0,$ then the above conclusions hold trivially. This implies that instead of gluing two cones together, we can glue the cone to its tangent planes along any ray.
	\end{rem}
	\begin{rem}\label{rn}
		Recall that the transition part of $\vp$ has length $2p_3-3r_0.$ Then the rate the tangent planes to graph$\ov{F}$ change is roughly $C_0\de_0,$ with $C_0$ independent of $p_3,r_0,F,$ and  $$\de_0=(2p_3-3r_0)^{-2}\no{F}_{C^0}+(2p_3-3r_0)^{-1}\no{F}_{C^1}+\no{F}_{C^2}.$$
		Notice that as one goes along a ray in the cone, the cone becomes flatter and flatter in a fixed tubular neighborhood of that ray.  (To see this, first for any point $q$ away from the origin in $\ga$, since the tangent plane is $P,$ if we restrict to a small ball of radius $r$ around that point, we can realize it as a graph of some $f$ over $P.$ Now, since the graph is conical with respect to $0,$ we immediately $f( x/\lam)=\lam\m f(x).$ Thus we have $\no{f}_{C^k(B_r(\lam q))}\le \lam^{-(k-1)} \no{f}_{C^k(B_r(q))} ,$ and thus $\no{f}_{C^k(B_r([q,\lam q]))}\le\no{f}_{C^k(B_r(q))},$ where $[q,\lam q]$ ($\lam>1$) is the line segment from $q$ to $\lam q.$ This is what we mean by the cone getting flatter and flatter along a ray.) Thus, the $\no{F}_{C^k}$ can be made as small as we want if we shrink the radius $r$ of the tube. 
		
		At the same time, if we choose $p_3$ large enough, we will not increase $F$ in $C^0,C^1,C^2.$ Thus with $r$ very small, $p_3$ very large, and $r_0$ relatively small compared to $p_3,$ say $r_0=2^{-5}p_3.$ Then we can make sure $\no{\ov{F}}_{C^k}$ also as small as we want. This makes the bridging part of the surface stays close enough to $x_1x_2x_3$-plane so that the topology of $\Si\cap B_r(\ga)\cap \{2r_0\le x_3\le 2p_3-2r_0\}$ is also trivial, or in other words, the projection of $\Si\cap B_r(\ga)\cap \{2r_0\le x_3\le 2p_3-2r_0\}$ into $x_1x_2x_3$-plane is also a (curved) tube around $x_3$-axis for $\{2r_0\le x_3\le 2p_3-2r_0\}.$
	\end{rem}	\subsection{Step 3 The non-closed prototype of calibration form}
	Now join the graph of $\ov{F},$ with part of the cone $C'$, $C$ in $\Ga_1,\Ga_2$. The transition happens smoothly. Denote this new surface by $\Si$ and the current with the correct orientation $\llbracket \Si\rrbracket .$ This surface $\Si$ is clearly Lagrangian, as we are bridging using Lagrangian graphs. Recall that every Lagrangian plane is the image of $x_1x_2x_3$-plane by a unitary group element $h\in U(\C^3)$, again by \cite{HLg}. 
	
	We claim that for each point $q\in \Si\cap\{r_0\le x_3\le 2p_3-r_0\}$ , we can assign a $h_q\in U(3)$ so that $T_q \Si$ is the image of $\{x_1,x_2,x_3-\text{plane}\}$ under $h_q$, while preserving the orientation. Moreover, we claim that we can make the map $h_q:\Si\to U(3)$ a smooth map.
	
	To see this, first, let $h_q^0:\Si\to \ov{\textnormal{Gr}}_3(\R^6)$ by $ h_q^0=T_q\Si,$ be the map that sends $q$ to the oriented tangent plane $T_q\Si$ in the oriented Grassmannian of $3$-plane in $\R^6$. This map is naturally smooth by the smoothness of $\Si.$ Moreover, the image of $h_q^0$ lies in the smooth submanifold of Lagrangian planes, which is isomorphic to $U(3)/SO(3)$, the orbit of $U(3)$ actions on the oriented $x_1x_2x_3$-plane (1.3 on page 87 of \cite{HLg}). Thus, $h_q^0$ can also be regarded as a smooth map $h_q^0:\Si\to U(3)/SO(3).$ Now consider the projection map $\Pi:U(3)\to U(3)/SO(3).$ This is a submersion and by the constant rank theorem locally we can find a smooth right inverse to this map. Now tangents of $\Si$ are very close to the $x_1x_2x_3$ plane, and are also Lagrangian, thus belonging to a neighborhood of $x_1x_2x_3$-plane in $U(3)/SO(3)$. Let $\Pi\m_0 $ be the right inverse of $\Pi$ in this small neighborhood of $x_1x_2x_3$-plane, i.e., $\Pi\circ\Pi_0\m=\id$ on this neighborhood. Now the map $h_q$ is defined as $h_q=\Pi_0\m\circ h_q^0$, which is smooth since both $\Pi_0\m$ and $h_q^0$ are. By construction, $h_q$ sends $q$ to an $U(3)$ element that rotates the $x_1x_2x_3$-plane to the oriented $T_q\Si.$
	
	Now extend $h_q$ smoothly in value from a smooth function $h_q:\Si\to U(3)$ to $h_q:B_r(\ga)\cap \{r_0\le x_3\le 2p_3-r_0\}\to U(3)$ by requiring that for $q\in B_r(\ga)\cap (\{r_0\le x_3\le \frac{3}{2}r_0\}\cup\{2p_3-\frac{3}{2}r_0\le x\le 2p_3-r_0\})$ and $q\in P$ we have $h\in SU(3).$ To see this, consider the following set of coordinates.
	\begin{lem}\label{ncs}
		Given a smooth function $f:\Om\to \R^3,$ with $\Om\s \R^3$ simply connected, we can find a coordinate system $Q(X_1,X_2,X_3,Y_1,Y_2,Y_3)$ in a neighborhood $O$ of $\gr f\s \R^6$ by setting
		\begin{align*}
			&Q(X_1,X_2,X_3,Y_1,Y_2,Y_3)\\=&(X_1,X_2,X_3,f(X_1,X_2,X_3))+Y_1n_1+Y_2n_2+Y_3n_3,
		\end{align*}where $n_1,n_2,n_3$ is any set of constant vectors so that $\textnormal{span}\{n_1,n_2,n_3,T_{f(x)}\gr f\}=\R^6$ for any $x\in \Om.$
	\end{lem}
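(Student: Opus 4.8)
The plan is to treat $Q$ as a graph-adapted replacement for the normal-exponential map used in the tubular neighborhood theorem: first show that $Q$ is everywhere a local diffeomorphism, then observe that it is injective on the zero section $\{Y=0\}$, and finally invoke the standard globalization argument to upgrade this to a genuine chart on a neighborhood of $\gr f$.

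First I would differentiate $Q$. Since
$Q(X_1,X_2,X_3,Y_1,Y_2,Y_3)=(X_1,X_2,X_3,f(X))+Y_1n_1+Y_2n_2+Y_3n_3$
with the $n_j$ constant, one gets
\begin{align*}
\pd_{X_i}Q&=(\de_{i1},\de_{i2},\de_{i3},\pd_{X_i}f(X)),\quad i=1,2,3,\\
\pd_{Y_j}Q&=n_j,\quad j=1,2,3,
\end{align*}
so $dQ$ does not depend on the $Y$-variables at all. The first three columns form a basis of $T_{f(X)}\gr f$, and by hypothesis these together with $n_1,n_2,n_3$ span $\R^6$ for every $X\in\Om$. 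Hence $dQ$ is invertible at every point of $\Om\times\R^3$, and by the inverse function theorem $Q$ is a local diffeomorphism there. Next I would note that $X=(X_1,X_2,X_3)\mapsto Q(X_1,X_2,X_3,0,0,0)=(X_1,X_2,X_3,f(X))$ is a smooth embedding of $\Om$ with image exactly $\gr f$; in particular $Q$ is injective on the closed submanifold $\{Y=0\}\s\Om\times\R^3$.

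It is then a standard fact --- the same one used to deduce the tubular neighborhood theorem from the normal exponential map --- that a smooth map which is a local diffeomorphism at every point of a submanifold $S$ and is injective on $S$ restricts to a diffeomorphism from some open neighborhood of $S$ onto an open neighborhood of its image. Applying this with $S=\{Y=0\}$, I obtain an open set $U\s\Om\times\R^3$ containing $\{Y=0\}$ such that $Q|_U$ is a diffeomorphism onto $O:=Q(U)$, an open neighborhood of $\gr f$ in $\R^6$. The inverse $Q\m:O\to U$ is then the asserted coordinate system, and in it $\gr f$ is precisely $\{Y_1=Y_2=Y_3=0\}$.

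I expect the only genuine subtlety to be this last globalization step when $\Om$ is noncompact: the ``radius'' in the $Y$-directions on which $Q$ remains injective may have to decay towards the ends of $\Om$. One handles this in the usual way, letting that radius depend lower-semicontinuously on the base point and taking the corresponding open tube as $U$. In every application of this lemma in the paper the relevant portion of $\gr f$ (for instance $\Si\cap B_r(\ga)\cap\{r_0\le x_3\le 2p_3-r_0\}$) is precompact, so a finite-cover version of the standard fact already suffices and no such delicacy arises. Finally I would remark that simple connectedness of $\Om$ plays no role in this construction; it is included in the statement only because in the intended use $f$ will be taken to be a gradient $\na F$.
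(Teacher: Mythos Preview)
Your proposal is correct and follows essentially the same approach as the paper: compute $dQ$, observe that its image is spanned by $n_1,n_2,n_3$ together with $T_{f(X)}\gr f$ (hence invertible by hypothesis), invoke the inverse function theorem locally, and then globalize via the standard tubular-neighborhood/Fermi-coordinate argument (the paper cites Section 7, 26.~Proposition in \cite{BO} for this step). Your observation that $dQ$ is independent of $Y$, and your remark that simple connectedness of $\Om$ is not actually used in the proof, are both correct and slightly sharper than what the paper records.
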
 
	\begin{rem}
		Since we are dealing with graphs, the choice $n_1=(0,0,0,1,0,0),n_2=(0,0,0,0,1,0),n_3=(0,0,0,0,0,1)$ works for any $f$.
	\end{rem}
	\begin{proof}
		The image of $dQ$ is spanned by $n_1,n_2,n_3,$ and the tangent space of the graph of $f$. Thus with the condition of $n_1,n_2,n_3$ we imposed, this implies that $dQ$ is bijective at any point. By the inverse function theorem, $Q$ is locally a coordinate around every point on $\gr f.$ To show that the coordinates patch together to a large coordinate system that covers all of $\gr f,$ use the argument as in Section 7, 26. Proposition in \cite{BO}. (Note that this book has a strange numbering system, and numbers sit in front of propositions, and theorems, instead of after.) Roughly speaking, the coordinate comes from exponentiating a vector bundle on $\gr f,$ so the proof is similar to the existence of Fermi coordinates.
	\end{proof}
	We already have the choice of $h_q$ on $\Si.$ On $\ga,$ choose $n_1,n_2,\pd_{x_3}$ positively oriented and generating the plane $P,$ and let $n_3$ be the orthogonal complement to $P\cup T_q\Si.$ Thus for $q\in \ga,$ we have $\textnormal{span}\{n_1,n_2,n_3,T_q\Si\}=\textnormal{span}\{P,T_q\Si,n_3\}=\R^6$. Now, $\det(n_1,n_2,n_3,T_q\Si)$ depends smoothly on $q,$ so for any point $q$ in a small neighborhood of $\ga,$ if extend $n_j$ constantly along the $x_1,x_2,x_3$ directions, we must have $\textnormal{span}\{n_1,n_2,n_3,T_q\Si\}=\R^6$. Thus, we can apply Lemma \ref{ncs} to $\Si$ near $\ga.$ Then we simply extend $h$ in constantly along the $Y_j$ directions in the $Q$-coordinates. By construction, $h_q|_P\in SU(3),$ since $h_q|_{\ga}\in SU(3)$, and $P$ is just varying the $Y_1,Y_2,Y_3$ directions of $\ga$ in the $Q$-coordinate. Similarly, we know that $h_q\in SU(3)$ for  $q\in B_r(\ga)\cap (\{r_0\le x_3\le \frac{3}{2}r_0\}\cup\{2p_3-\frac{3}{2}r_0\le x\le 2p_3-r_0\})$.
	
	Let $\phi$ be the special Lagrangian form. We change it pointwise on the tangent space of $\R^6$ to $\ov{\phi}_q=(h_q\m)\du\phi.$ Since we have $h_q\in U(3),$ we see that $\ov{\phi}$ restricted to each tangent space is still a calibration. (Any $SO(n)$ change of calibration on the tangent space still gives a calibration.) Moreover, at each point $q\in \Si,$ $\ov{\phi}$ calibrates $T_q\Si$, since $h_q\m T_q\Si=x_1x_2x_3-$plane and $h_q\m$ preserve the orientation. It also calibrates $P$ at any tangents to $P$ by construction. We set $\det h_q\m=e^{i\ta}$ for some $\ta$ in a small neighborhood of $0$ in $[-\pi,\pi].$ (This is doable since we stay close in a neighborhood of $x_1x_2x_3$-plane.) Since $\phi=\Re dz_1\w dz_2\w dz_3,$ we have
	\begin{align*}
		\ov{\phi}=&\Re (h_q\m)\du dz_1\w dz_2\w dz_3\\=&\Re \det h_q\m dz_1\w dz_2\w dz_3\\=&\Re e^{i\ta}(\phi+iJ\du \phi)\\
		=&\cos\ta \phi-\sin\ta J\du\phi.
	\end{align*}where $J$ is the complex structure matrix on $\C^3.$ Thus, we have
	\begin{align}\label{dta}
		d\ov{\phi}=-d\ta\w (\sin \ta\phi+\cos\ta J\du\phi).
	\end{align}
	\begin{rem}\label{np}
		By construction $\no{\ov{\phi}}_{C^k}=O(\no{{h_q\m}}_{C^k}).$ To estimate the latter, first $\no{h_q}_{C^k}=O(\no{\ov{F}}_{C^{k+2}}),$ since $\gr\na \ov{F}$ is $\Si$ and $h_q$ gives the tangent space to $\Si,$ i.e., controlled by $\textnormal{Hess}\ov{F}.$ Note that $h_q$ is just the identity matrix on $\ga,$ thus with flatter $\ov{F},$ $\no{h_q-\id}_{C^k}$ can be made arbitrarily small. This implies that $\no{h_q\m}_{C^0}$ is controlled by an absolute constant. To see this, note that \begin{align*}
			\no{h_q\m-1}_{C^0}=\no{\frac{\id-h_q}{1-(\id-h_q)}}_{C^0}\le\frac{\no{\id-h_q}_{C^0}}{1-\no{\id-h_q}_{C^0}}. 
		\end{align*} Thus, $\no{h_q\m}_{C^k}=O(\no{h_q}_{C^k}).$ For example, $dh_q\m=-h_q\m (dh_q) h_q\m.$ This gives $\no{h_q\m}_{C^1}=O( \no{h_q}_{C^1}),$ and by induction one can prove that $$\no{h_q\m}_{C^k}=O(\no{h_q\m}_{C^0}^{k+1}\no{h_q}_{C^k})=O(\no{h_q}_{C^k}).$$ To sum it up, we have $\no{\ov{\phi}_q}_{C^k}=O(\no{{h_q}}_{C^k})=O(\no{\ov{F}}_{C^{k+2}}),$ where the constant depends on $r,r_0$ and $k.$ 
	\end{rem}
	\subsection{Step 4 Producing the calibration form}
	We have already constructed the glued surface and a smooth form $\ov{\phi}$ that calibrates both $P$ and $\Si$ on the tangent space level. However, it is not a closed form. We will modify it to be a closed form in this subsection. 
	
	The following lemma says how to get primitives which are zero on a fixed plane with a dimension lower than the form and will be zero on a fixed orthogonal plane if the form itself is. This is basically an application of the homotopy operator in de Rham cohomology. We squash the normal components to the plane using a homotopy. The primitive is zero on the plane because, by projection, we are left only with tangent spaces lower than the dimension of the form. If we restrict to orthogonal planes, which are invariant under homotopy, integration along homotopy is simply integration along the orthogonal lines. Then zero integrates to zero.
	\begin{lem}\label{inte}
		Suppose we have a cylindrical region $\Om\times [-1,1]^k\s \R^{p+k},$ where $\Om\s \R^p$ is a smooth star-shaped region containing $0\in \R^p$. Let $\tau$ be a closed differential $p+l$-form in $\Om\times [-1,1]$, with $l\ge 1$. Then there exists a primitive $\Phi$ of $\tau$ which satisfies $d\Phi=\tau,$ $\Phi|_{\Om\times \{0\}}\equiv 0.$ Moreover, if for some $1\le k_1\le p+k$, $\tau|_P\equiv 0$ on a $k_1$-dimensional plane $P$ with $P\perp \Om,$ (we only require the part of $P$ that doesn't intersect $\Om$ to be orthogonal,) then we also have $\Phi|_P=0.$
	\end{lem}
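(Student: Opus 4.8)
The plan is to construct $\Phi$ explicitly via the standard de Rham homotopy operator for the linear contraction of $\Om\times[-1,1]^k$ onto the slice $\Om\times\{0\}$, and then check that the vanishing hypothesis on the orthogonal plane $P$ propagates. First I would set up the homotopy $H:\Om\times[-1,1]^k\times[0,1]\to\Om\times[-1,1]^k$ by $H(x,t,s)=(x,st)$, where $x\in\Om$ and $t\in[-1,1]^k$; this is smooth, equals the identity at $s=1$, and lands in $\Om\times\{0\}$ at $s=0$, and it maps the cylinder into itself because $[-1,1]^k$ is star-shaped about $0$. The associated homotopy operator is $K\tau=\int_0^1 \iota_{\pd_s}H\du\tau\,ds$, and the classical identity $dK\tau+Kd\tau=\id\du\tau-(\text{const})\du\tau$ gives, since $\tau$ is closed and the constant map at $s=0$ pulls back every positive-degree form to $0$, the relation $d(K\tau)=\tau$. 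So $\Phi:=K\tau$ is a primitive.

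Next I would verify $\Phi|_{\Om\times\{0\}}\equiv 0$. On $\Om\times\{0\}$ the composite $H(\cdot,0,s)$ is the constant map to the base point for all $s$, so $H\du\tau$ restricted to tangent vectors of $\Om\times\{0\}$ (together with $\pd_s$) vanishes because it factors through a point; contracting with $\pd_s$ and integrating in $s$ still gives $0$. Equivalently, in the usual coordinate formula $\Phi$ is a sum of terms each containing at least one $dt_i$ factor paired against an $\pd_{t_i}$-component, which vanish on the slice $t=0$ — more carefully, the terms of $\Phi$ that survive restriction to $t=0$ are those built purely from $dx$'s, but those terms carry a coefficient $\int_0^1 s^{\deg-1}(\cdots)|_{t=st}\,ds$ that is evaluated with the $t$-argument scaled to $0$ and with an explicit $t_j$ prefactor from $\iota_{\pd_s}H\du$, hence vanish. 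I'd phrase this via the factor-through-a-point argument, which is cleanest.

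The key step — and the one I expect to be the main obstacle in writing cleanly — is the claim $\Phi|_P=0$ when $\tau|_P\equiv 0$ and $P$ is orthogonal to $\Om$ away from $\Om$. The point is that under the hypothesis $P\perp\Om$, the plane $P$ is (in the relevant region) of the form $\{x=x_0\}\times(\text{affine subspace of }[-1,1]^k)$ for a fixed $x_0$, i.e. $P$ is invariant under the homotopy $H$: if $(x_0,t)\in P$ then $(x_0,st)\in P$ for all $s\in[0,1]$, using that $P$ is a linear (through $0$ on the $t$-part, since it must pass through the region near $\Om$ orthogonally) subspace in the $t$-directions. Therefore $H$ restricts to a homotopy of $P$ into itself, and $K\tau|_P = \int_0^1 \iota_{\pd_s}(H|_P)\du(\tau|_P)\,ds$; since $\tau|_P\equiv 0$, every pullback $(H|_P)\du(\tau|_P)$ vanishes, so $\Phi|_P=K\tau|_P=0$. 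The subtlety to handle carefully is that $P$ need only be orthogonal to $\Om$ on the part of $P$ not meeting $\Om$, so near $\Om$ a small argument is needed — but the region where $P$ meets $\Om$ is exactly where the homotopy collapses $t\to0$, so $P$ near there contains $\Om\times\{0\}$-directions only, and one checks the formula for $\Phi$ restricted to $P$ still only ever sees $\tau|_P$; I would make this precise by choosing coordinates adapted to the product structure $\Om\times[-1,1]^k$ and noting $P$ is a coordinate subspace in those coordinates, so all pullbacks in the homotopy formula land inside forms on $P$.
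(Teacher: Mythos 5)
Your proposal uses exactly the same homotopy-operator approach as the paper: contract the cylinder onto the slice $\Om\times\{0\}$ by $(x,t,s)\mapsto(x,st)$, take $\Phi=K\tau$ with the de Rham homotopy operator, and derive the two vanishing statements from the behaviour of the homotopy at $t=0$ and from invariance of $P$ under the homotopy. The overall structure is right, and the key idea for $\Phi|_P=0$ --- that $P$ is invariant under the contraction so the integrand sees only $\tau|_P$ --- is the same as the paper's.

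There is, however, a repeated misstatement that obscures where the hypothesis $l\ge1$ enters, and this needs fixing. You assert twice that the time-$0$ map of the homotopy is ``the constant map'' (or that it ``factors through a point''). It is not: $H(\cdot,\cdot,0)$ is the projection $(x,t)\mapsto(x,0)$, which restricts to the identity on $\Om\times\{0\}$. The correct reason that $H_0\du\tau=0$ (and hence that Cartan's formula gives $d(K\tau)=\tau$) is that $H_0$ factors through the $p$-dimensional plane containing $\Om\times\{0\}$ while $\tau$ has degree $p+l>p$. This is exactly where $l\ge1$ is used --- and for $l=0$ the lemma would be false, since then $d\Phi=\tau-\pi\du\tau\ne\tau$ in general --- but your write-up never invokes the degree hypothesis. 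The paper makes this step explicit. Similarly, for $\Phi|_{\Om\times\{0\}}\equiv0$ the ``factor-through-a-point'' phrasing is incorrect; the argument you should keep is your coordinate fallback, that $dH(\pd_s)=(0,t)$ vanishes pointwise on the slice $t=0$, so the $\pd_s$-contraction dies there. That is precisely the paper's argument (in their notation, $dG_{t,x}(\pd_t)=x^\perp=0$ for $x\in W$).
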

	\begin{cor}
		With the same assumptions as in Lemma \ref{inte}. If $\ov{\Phi}$ is another primitive of $\tau,$ then $\psi=\ov{\Phi}-\Phi$ is an exact form that differs from $\ov{\Phi}$ in $C^0$ up to $O(\no{\tau}_{C^0}).$
	\end{cor}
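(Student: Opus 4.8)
The plan is to note that $\psi$ is closed, deduce exactness from contractibility of the cylinder, and read off the $C^0$ bound directly from the homotopy operator that produced $\Phi$ in Lemma \ref{inte}.

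First, since $\ov{\Phi}$ and $\Phi$ are both primitives of $\tau$ we have $d\psi=d\ov{\Phi}-d\Phi=\tau-\tau=0$, so $\psi$ is a closed form of degree $p+l-1\ge p\ge 1$ on $\Om\times[-1,1]^k$. This region is star-shaped about the origin, being the product of the star-shaped $\Om$ with a cube, hence contractible; the Poincar\'e lemma then gives that $\psi$ is exact, proving the first assertion.

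For the second assertion, recall that in the proof of Lemma \ref{inte} the primitive is produced as $\Phi=K\tau$, where $K$ is the standard de Rham homotopy operator on the fixed star-shaped region $\Om\times[-1,1]^k$: it is obtained by pulling $\tau$ back along the straight-line contraction to the origin, contracting with the radial vector field, and integrating the result over the contraction parameter in $[0,1]$. Because this is a fiber integration over a fixed compact interval against a kernel depending only on the (fixed) geometry of $\Om\times[-1,1]^k$ and not on $\tau$, it satisfies $\no{K\tau}_{C^0}\le C_0\no{\tau}_{C^0}$ with $C_0$ an absolute constant. Since $\psi-\ov{\Phi}=-\Phi=-K\tau$, we conclude $\no{\psi-\ov{\Phi}}_{C^0}=O(\no{\tau}_{C^0})$, as claimed.

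The statement is essentially soft, so there is no serious obstacle; the only point meriting care is the last one, namely spelling out that the homotopy operator $K$ does not inflate the $C^0$ norm beyond a constant determined by $\Om\times[-1,1]^k$ alone. This is immediate once $K$ is written explicitly as in the proof of Lemma \ref{inte}, since every integration there runs over the fixed bounded parameter set with integrand bounded pointwise by $\no{\tau}_{C^0}$ times a fixed factor.
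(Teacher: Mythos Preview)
Your proof is correct and follows essentially the same approach as the paper, which simply says ``apply the above result'': the exactness of $\psi$ comes from the Poincar\'e lemma on the contractible cylinder, and the $C^0$ bound comes from the explicit homotopy-operator formula for $\Phi$. One small inaccuracy: the homotopy $G$ used in Lemma~\ref{inte} is not the straight-line contraction to the origin but the contraction $G(t,x)=x^T+tx^\perp$ onto $\Om\times\{0\}$; this does not affect your argument, since either operator yields the same $C^0$ estimate.
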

	\begin{proof}
		Let $W$ denote the $p$ plane containing $\Om\times\{0\}$ in $\R^{p+k}$. Let $\pi$ be the orthogonal projection of $\R^{p+k}$ onto $W$.  For any $x\in \R^{p+k},$ write $x$ as $x=x^T+x^\perp,$ with $x^T=\pi(x).$ Define a homotopy $G:[0,1]\times \R^{p+k}\to \R^{p+k}$ as follows
		\begin{align*}
			G(t,x)=x^T+tx^\perp.
		\end{align*} 
		It is clear that $G$ is a smooth homotopy between the identity map and $\pi.$
		
		Now recall the following formula for the homotopy invariance in de Rham cohomology \cite{PL}. Let $I$ be the integration operator
		\begin{align}\label{hto}
			I(\tau)=\int_{[0,1]}\pd_t\lrcorner G\du\tau dt.
		\end{align}
		By Cartan's magic formula, we have
		\begin{align*}
			dI(\tau)+Id\tau=\tau-\pi\du\tau.
		\end{align*}
		Since $\tau$ is closed, we have $d\tau=0.$ Note that the range of $d\pi$ is precisely the $\R^p$ tangent to $W$. Thus the $p+l$ form $\pi\du \tau\equiv0.$ This gives 
		\begin{align*}
			dI(\tau)=\tau.
		\end{align*}
		We claim that $\Phi=I(\tau)$ satisfies the conditions as stated. 
		
		Differentiating $G,$ we deduce that $dG_{t,x}(\pd_t)=x^\perp,$ $dG_{t,x}(v)=G(t,v)$ for any $v$ tangent to $\R^{k+p}.$ For any $x\in W,$ we have $dG_{t,x}(\pd_t)=0$, since we have $x^\perp=0.$ This implies $\pd_t\lrcorner G\du(\tau)\equiv 0$ for $x\in W,$ and thus by (\ref{hto}), we have $I(\tau)\equiv 0$ on $W.$
		
		If $P\perp Q,$ then $P$ is invariant by $G.$
		\begin{align*}
			\pd_t\lrcorner (G\du \tau)_{y}(v_2,\cd,v_p)=\tau_{G(t,y)}(dG(\pd_t),\cd,dG(v_p)).
		\end{align*}
		If $\tau|_{P}\equiv0,$ then we clearly have $G(t,y)\in P$ and $\tau_{G(t,y)}=0$ for any $y\in P.$ ($P$ is invariant under the homotopy $G.$) This shows $I(\tau)\equiv 0$ on $P$ if $\tau|_P\equiv0$.
		
		For the proof of the corollary, simply apply the above result.
	\end{proof}
	Now we want to use the above Corollary with $\tau=d\ov{\phi},$ after changing the coordinate to the $Q$-coordinate in Lemma \ref{ncs}. Then there exists a closed $3$-form $\psi,$ which is defined by
	\begin{align}\label{defp}
		\psi=\int_{[0,1]}\pd_t\res G\du\ov{\phi} dt,
	\end{align}where $G(t,x)=(X_1,X_2,X_3,0,0,0)+t(0,0,0,Y_1,Y_2,Y_3)$ in the $Q$-coordinate. We have $\no{\psi-\ov{\phi}}\le C\no{d\ov{\phi}}_{C^0},$ and $\psi\equiv \ov{\phi}$ on $\Si.$ Note that $\no{d\ov{\phi}}_{C^0}\le\no{\ov{\phi}}_{C^1}=O(\no{\ov{F}}_{C^3})$ by Remark \ref{np}. By Remark \ref{rn}, we know that $\no{\ov{F}}_{C^3}$ can be made as small as we want when we shrink $r,$ so we can make $\psi$ as close to $\ov{\phi}$ as we want. To show that $\psi\equiv \ov{\phi}$ on $P,$ we need to show that $d\ov{\phi}|_P\equiv 0.$ This is achieved with the following lemma.
	\begin{lem}\label{ted}
		For the modified special Lagrangian form $\ov{\phi}$ we have
		\begin{align}\label{dte0}
			d\ta|_{P}\equiv 0.
	\end{align} 	\end{lem}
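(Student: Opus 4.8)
The goal is to show $d\theta|_P \equiv 0$, where $\theta$ is the smooth function on $B_r(\ga)\cap\{r_0\le x_3\le 2p_3-r_0\}$ determined by $\det h_q^{-1} = e^{i\theta}$. The key geometric input we have is that, by construction in Step 3, $h_q\in SU(3)$ for every $q\in P$ (this was arranged using the $Q$-coordinates of Lemma~\ref{ncs}: on $\ga$ we chose $h_q=\id$, and $P$ is obtained from $\ga$ by moving only in the $Y_1,Y_2,Y_3$ directions along which $h$ was extended constantly, and separately we arranged $h_q\in SU(3)$ on the two transition slabs). So the plan is simply to differentiate the relation $\det h_q^{-1}\equiv 1$ along directions tangent to $P$.

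\textbf{Key steps.} First I would fix a point $q_0\in P$ and a vector $w$ tangent to $P$ at $q_0$. Since $h_q\in SU(3)$ for all $q$ in a neighborhood of $q_0$ inside $P$, the function $q\mapsto \det h_q^{-1}$ is identically $1$ on that neighborhood of $P$; hence its differential in any direction tangent to $P$ vanishes. Writing $\det h_q^{-1} = e^{i\theta(q)}$ with $\theta$ taking values in a small neighborhood of $0$ (so the branch of the logarithm is unambiguous, as noted in Step 3), we get $i\, e^{i\theta(q_0)}\, d\theta_{q_0}(w) = 0$, hence $d\theta_{q_0}(w)=0$. As $q_0\in P$ and $w\in T_{q_0}P$ were arbitrary, $d\theta|_P\equiv 0$. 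It is worth remarking explicitly that here we only need $h_q$ to be $SU(3)$-valued \emph{on $P$} — not on all of $\Si$ or the full neighborhood — and this is precisely what Step 3 secured.

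\textbf{Consequence for $d\ov{\phi}|_P$.} Although the lemma as stated only asserts \eqref{dte0}, the point of it (needed in the surrounding argument) is that it forces $d\ov{\phi}|_P\equiv 0$. This follows immediately from formula~\eqref{dta}: $d\ov{\phi} = -d\theta\wedge(\sin\theta\,\phi + \cos\theta\, J^\ast\phi)$, so restricting to $P$ and using $d\theta|_P\equiv 0$ gives $d\ov{\phi}|_P\equiv 0$. Combined with the Corollary to Lemma~\ref{inte} applied with $\tau = d\ov{\phi}$ in the $Q$-coordinates, this yields $\psi\equiv\ov{\phi}$ on $P$, closing the loop with the discussion preceding the lemma.

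\textbf{Main obstacle.} There is essentially no analytic difficulty here — the content of the lemma is entirely bookkeeping about where $h_q$ was arranged to lie in $SU(3)$ versus merely $U(3)$. The one place that requires care is making sure the claim "$h_q\in SU(3)$ on $P$" is genuinely valid at \emph{every} point of $P$ in the relevant region and not just on $\ga$ and the two transition slabs; this is exactly the role of the constant-extension-along-$Y_j$ construction via Lemma~\ref{ncs}, so I would state that dependence cleanly rather than re-derive it. Once that is in hand, the proof is the one-line differentiation above.
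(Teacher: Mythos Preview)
Your argument has a genuine gap: you are proving a weaker statement than what is needed. The notation $d\ta|_P\equiv 0$ here means that the ambient $1$-form $d\ta$ vanishes at every point of $P$, i.e.\ $d\ta_q(w)=0$ for \emph{all} $w\in T_q\R^6$, not only for $w\in T_qP$. Your observation that $h_q\in SU(3)$ on $P$ gives $\ta\equiv 0$ on $P$, hence $d\ta_q(w)=0$ for $w$ \emph{tangent} to $P$; but it says nothing about the derivatives of $\ta$ in directions transverse to $P$. Concretely, in the $Q$-coordinates $\ta$ depends only on $(X_1,X_2,X_3)$ and $P$ sits inside $\{X_1=X_2=0\}$, so the missing quantities are $\pd_{X_1}\ta$ and $\pd_{X_2}\ta$ along $\ga$ --- derivatives of the Lagrangian angle along $\Si$ transverse to $\ga$ --- and these are not forced to vanish by $\ta|_\ga=0$ alone.

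That the strong statement is what is actually required becomes clear once you look at how the lemma is used. The application needs $d\ov{\phi}$ (a $4$-form) to vanish as an ambient form at points of $P$ so that Lemma~\ref{inte} yields $\psi\equiv\ov{\phi}$ on $P$ as ambient $3$-forms; note that the \emph{pullback} of any $4$-form to the $3$-plane $P$ is automatically zero, so your ``Consequence'' paragraph is either trivial or insufficient depending on interpretation. The paper closes the gap by invoking the Harvey--Lawson identity (III.2.D (2.19) in \cite{HLg}) relating $d\ta$ along $\Si$ to the mean curvature $H_\Si$, and then computing directly that $H_\Si=0$ along $\ga$: since $\Si$ is the gradient-graph of $\vp F+(1-\vp)F'$ with $F,F'$ and their first two derivatives vanishing on $\ga$, one finds $H_\Si=\vp H_C+(1-\vp)H_{C'}=0$ there. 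This mean-curvature computation is the essential missing idea in your plan.
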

	\begin{proof}
		We have collected some basic facts about immersions of Riemannian submanifolds in the Appendix \ref{bas}. that we use in the following calculations.	It suffices to verify $d\ta=0$ for the basis $\pd_{x_j},\pd_{y_k}$ at every point of $P.$ Recall that $h$ on $P$ is defined by extending constantly in the normal directions of the coordinate system $Q$ in Lemma \ref{ncs}. This immediately implies that $d\ta=0$ along all normal directions of $\Si$ at $\ga,$ i.e., along $\pd_{Y_j}.$ Moreover, $d\ta_q(\pd_{x_j})=d\ta_{\pi_{\Si}(q)}\pd_{x_j},$ where $\pi_\Si$ is the projection to $\Si$ in the coordinate in Lemma \ref{ncs}. Note that for points on $P$, the image of $\pi_\Si$ always lies on $\ga.$ Thus, we only have to verify $d\ta|_{\ga}(\pd_{x_j})=0$ for all $j.$ By III.2.D (2.19) in \cite{HLg}, $d\ta(\pd_{X_j})=0$ for all $j$ along $\ga$ is equivalent to saying that $H_{\Si}=0$ along $\ga.$
		
		Now, recall that the bridging part of $\Si$ is constructed from bridging the Lagrangian potentials $F,F'$, as the graph of $\na(\vp F+(1-\vp)F').$ Thus, the map here is $\ov{f}:\R^3\to \R^{6}$ defined by 
		\begin{align*}
			\ov{f}(x)=(x,\na(\vp (F-F')+F')).
		\end{align*} 	
		Moreover, we have chosen the potential so that $F-F'=0$ along $\ga$. Since the graphs of $F$ and $F'$ are both tangent to $x_1x_2x_3$-plane along $\ga$, we deduce that $dF=dF'=0,\textnormal{Hess}F=\textnormal{Hess}F'=0$ along $\ga.$ The metric coefficients $g_{ij}$ is precisely $\de_{ij}.$ This implies that $v_j=\ov{f}\pf(\pd_j)$ form an orthonormal basis, and the mean curvature of $\Si$ along $\ga$ is precisely 
		\begin{align*}
			H_\Si=&\ri{\na_{v_i}v_i,\pd_{y_j}}\pd_{y_j}\\
			=&\pd_i\pd_i\ov{f}^{y_j}\pd_{y_j}\\
			=&\De((\na\vp)^j (F-F')+\vp(\na(F-F'))^j+(\na F')^j)\pd_{y_j}\\
			=&\bigg\{(F-F')\De(\na\vp)^j+(\na\vp)^j\De(F-F')+2\ri{\na[(\na\vp)^j],\na (F-F')}\\ 
			&+(\na(F-F'))^j\De \vp+\vp\De((\na(F-F'))^j)+2\ri{\na\vp,\na((\na(F-F'))^j)}\\
			&+\De((\na F')^j)\bigg\}\pd_{y_j}\\
			=&\big(\vp\De ((\na(F-F'))^j)+\De((\na F')^j)\big)\pd_{y_j}.
		\end{align*}where the superscript denotes the component.
		
		The same calculation shows that $H_{C}=\big(\De((\na F)^j)\big)\pd_{y_j}$ and $H_{C'}=\big(\De((\na F')^j)\big)\pd_{y_j}.$ This shows that $H_{\Si}=\vp H_C+(1-\vp)H_{C'}=0$ along $\ga,$ and we are done.
	\end{proof}
	Moreover, we have $d\ov{\phi}\equiv0$ in $ \Om\cap (\{r_0\le x_3\le \frac{3}{2}r_0\}\cup\{2p_3-\frac{3}{2}r_0\le x\le 2p_3-r_0\})$, since $h\in SU(3)$ and thus $\ta=0$ in these regions. Thus, integrating over normal parts of $\Si$ in the new coordinate doesn't change the primitive by the above Corollary. This implies the antiderivative $\Phi$ we create for $d\phi$ is zero in $ \Om\cap (\{r_0\le x_3\le \frac{3}{2}r_0\}\cup\{2p_3-\frac{3}{2}r_0\le x\le 2p_3-r_0\})$, and so is $\psi$. To sum it up, $\psi$ is a closed form that coincide with $\phi$ in $ B_r(\ga)\cap (\{r_0\le x_3\le \frac{3}{2}r_0\}\cup\{2p_3-\frac{3}{2}r_0\le x\le 2p_3-r_0\})$ and also on the surfaces $\Si$ and the plane $P.$
	
	Now that we have the form $\psi$ and the glued surface, we will change the metric to make it a calibration. We need to use the following implicit function argument.
	\begin{thm}\label{ift}
		There exists a $C^0$ neighborhood of $\phi$, so that any $3$-form $\tau$ in that neighborhood is the pushforward of $\phi$ under an action in $GL(6)$, i.e., $\tau=h\pf \phi,$ for some $h\in GL(6)$. Moreover, if $\tau$ varies smoothly, then we can choose $h$ to depend smoothly on $\tau.$
	\end{thm}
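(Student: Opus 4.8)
The plan is to recognize Theorem~\ref{ift} as the statement that $\phi$ is a \emph{stable} $3$-form on $\R^6$, i.e.\ that its $GL(6,\R)$-orbit in $\Lambda^3(\R^6)\du$ is open, together with a smooth-dependence refinement; both parts will follow once the orbit map
\[
F\colon GL(6,\R)\to\Lambda^3(\R^6)\du,\qquad F(h)=h\pf\phi=(h\m)\du\phi,
\]
is shown to be a submersion at $h=\id$, so essentially all the work is in that one point.

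First I would compute the differential of $F$ at the identity. Regarding $X\in\gl(6,\R)$ as the linear vector field $x\mapsto Xx$, one has $dF_{\id}(X)=-\ld_X\phi$, that is
\[
dF_{\id}(X)(v_1,v_2,v_3)=-\phi(Xv_1,v_2,v_3)-\phi(v_1,Xv_2,v_3)-\phi(v_1,v_2,Xv_3).
\]
Hence $\ker dF_{\id}$ is the Lie algebra $\mf g_\phi$ of the stabilizer $\{h\in GL(6,\R):h\pf\phi=\phi\}$, and since $\dim GL(6,\R)=36$ and $\dim_\R\Lambda^3(\R^6)\du=\binom{6}{3}=20$, surjectivity of $dF_{\id}$ is precisely the equality $\dim\mf g_\phi=16$. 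Writing $\R^6=\C^3$ with its complex structure $J$ and $\Omega=dz_1\w dz_2\w dz_3$ (so that $\phi=\Re\Omega$), every element of $SL(3,\C)$ fixes $\Omega$ and hence $\phi$, which gives $\sll(3,\C)\s\mf g_\phi$, so $\dim\mf g_\phi\ge16$; for the reverse inequality I would use the decomposition $\gl(6,\R)=\sll(3,\C)\op(\C\cdot\id)\op\mf a$, where $\mf a=\{A\in\gl(6,\R):AJ=-JA\}$ is the space of conjugate-$\C$-linear endomorphisms, of real dimensions $16+2+18=36$, together with the bidegree splitting $\Lambda^3(\R^6)\du=(\Lambda^{3,0}\op\Lambda^{0,3})_\R\op(\Lambda^{2,1}\op\Lambda^{1,2})_\R$, of real dimensions $2+18=20$: under $dF_{\id}$ the summand $\C\cdot\id$ maps onto $\operatorname{span}_\R\{\Re\Omega,\Im\Omega\}$, while $dF_{\id}|_{\mf a}$ is injective (if $\ld_A\Omega=0$, then contracting the top-degree form $\Omega$ with the conjugate-linear part of $A$ and using Cramer's rule forces $A$ to vanish on $\Lambda^{0,1}$, hence $A=0$ by reality), so its image is the complementary $18$-plane. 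Thus $dF_{\id}$ is onto, $\mf g_\phi=\sll(3,\C)$, and the orbit is open. (Equivalently, one may invoke the classical classification of open $GL(6,\R)$-orbits of $3$-forms in six dimensions, in which $\Re(dz_1\w dz_2\w dz_3)$ lies in the orbit with stabilizer $SL(3,\C)$.)

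Once $F$ is known to be a submersion at $\id$, the implicit function theorem in its submersion form produces an open neighborhood $\W$ of $\phi$ in $\Lambda^3(\R^6)\du$ --- which we may shrink to a $C^0$-ball, the topology being that of a finite-dimensional space --- together with a smooth map $\si\colon\W\to GL(6,\R)$ satisfying $\si(\phi)=\id$ and $F(\si(\tau))=\tau$ for all $\tau\in\W$. Setting $h:=\si(\tau)$ we get $h\pf\phi=\tau$, and if $\tau=\tau_s$ is a smooth family taking values in $\W$ then $h_s:=\si(\tau_s)$ depends smoothly on $s$; applied fiberwise this also yields smooth $GL(6,\R)$-valued fields from smooth $3$-form fields, which is how the theorem will be used in Step~4.

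The one substantive ingredient is the surjectivity of $dF_{\id}$ --- equivalently, the openness of the orbit, equivalently that nothing strictly larger than $SL(3,\C)$ fixes $\Re(dz_1\w dz_2\w dz_3)$ --- and I expect this dimension matching ($36-16=20$) to be the main obstacle; it is classical and can be dispatched either by the linear-algebra computation sketched above or by citing Hitchin's theory of stable forms. Everything downstream of it is the soft submersion theorem and uses nothing special about $\phi$.
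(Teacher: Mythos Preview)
Your argument is correct and follows the same overall architecture as the paper's: both show that the orbit map $GL(6,\R)\to\Lambda^3(\R^6)\du$, $h\mapsto h\du\phi$ (equivalently its pushforward variant), is a submersion at the identity, and then invoke the local normal form for submersions to produce a smooth local section $\tau\mapsto h(\tau)$.

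The only substantive divergence is in how the key rank condition $\dim\ker dF_{\id}=16$ is verified. The paper writes out the twenty linear equations $d\lambda(h)(e_I)=0$ explicitly on the standard basis of $\Lambda^3(\R^6)$ and checks by inspection that the resulting $20\times 36$ system has full rank $20$ (this is done in Appendix~\ref{cif}). You instead identify the infinitesimal stabilizer structurally: from $\gl(6,\R)=\sll(3,\C)\op(\C\cdot\id)\op\mf a$ and the bidegree splitting of real $3$-forms you read off that $\sll(3,\C)$ is exactly the kernel. Your route is more conceptual, pinpoints the stabilizer as $SL(3,\C)$, and connects the statement to Hitchin's notion of a stable $3$-form; the paper's route is a brute-force linear-algebra check that is less illuminating but entirely self-contained and, as the paper remarks after the computation, adapts mechanically to other calibrations such as the associative $3$-form without needing a new structural argument each time.
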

	\begin{proof}
		The proof here in principle works for some other calibrations as well, so we first use general dimensions. Suppose we have a constant coefficient $k$-form $\phi$ in $\R^n$, i.e., an element of $\bigwedge^k(\R^n)$. Consider the following map $\lambda:GL(\R^n)\to \bigwedge^k(\R^n)$ defined by
		\begin{align*}
			\lam(h)=h\du\phi.
		\end{align*}
		$\lam$ is apparently a smooth map.
		The differential at the identity of this map is
		\begin{align*}
			d\lam(h)(v_1,v_2,\cd,v_k)=\phi(h v_1,v_2,\cd,v_k)+\phi(v_1,h v_2,\cd,v_k)+\phi(v_1,v_2,\cd,hv_k).
		\end{align*}
		Suppose we have $$\dim\ker d\lam=\dim GL(\R^n)-\dim\bigwedge^k(\R^n). $$ Then in a neighborhood of identity, $\lam$ is a submersion. (Put in a local coordinate so that the differential of the first square block is invertible. This is characterized by a nonzero determinant, which gives the neighborhood by smoothness.) Submersions can be locally represented by coordinate projections (Chapter 1, 35. Lemma in \cite{BO}) so there exists a neighborhood $O$ of $\phi$ such that any form $\vp\in O$ can be written as $\vp=h\du \phi$ for some $h\in GL(\R^n)$ close to the identity, and this choice of $h$ depends smoothly on $\vp.$ Moreover, if $\vp=\phi,$ then $h=\id.$ 
		
		Thus, we only need to calculate $\dim \ker d\lam$, the dimension of the Lie algebra of the invariant group of $\phi$, with $\phi$ being the special Lagrangian $3$-form. The details are in the Appendix \ref{cif}
	\end{proof}
	Now in order to use Theorem \ref{ift}, we again take the $Q$-coordinate as in Lemma \ref{ncs}. Recall that $\Si$ is then an open region in the $X_1X_2X_3$-plane, and $P$ is the $Y_1Y_2X_3$-plane. We have a closed form $\psi$ that is $C^0$-close to the special Lagrangian form $\phi.$ Now consider the form $h_q\du \psi$. Recall that $h_q$ is constant along $Y_1,Y_2,Y_3$ directions. Moreover, since $\no{h_q-\id}_{C^0}$ can be as small as we want, we deduce that $\no{h_q\du\psi-\phi}_{C^0}$ can also be made as small as we want. Thus, we can invoke Theorem \ref{ift} to deduce the existence of a smooth function $h_q':B_r(\ga)\to GL(6),$ so that 
	\begin{align*}
		h_q\du\psi=((h_q')\m)\du\phi,
	\end{align*}
	which gives
	\begin{align*}
		\psi=(h_q\m)\du((h_q')\m)\du\phi=((h_q'h_q)\m)\du\phi.
	\end{align*}
	By $\psi\equiv\ov{\phi}$ on $\Si$, $P$, and $B_r(\ga)\cap (\{r_0\le x_3\le \frac{3}{2}r_0\}\cup\{2p_3-\frac{3}{2}r_0\le x\le 2p_3-r_0\})$, we deuce that $h_q\du\psi\equiv \phi$ on these two surfaces and two regions, thus $h_q'$ is identity there.
	
	Now, we just change the metric pointwise to $g$ so that $g_q=((h_q'h_q)\m)\du \de,$ where $\de$ is the standard Euclidean metric. Then $g$ depends smoothly on $p$. Note that by construction the comass of $\psi$ is $1$ in $g$. (Since we change both the special Lagrangian form and the standard metric by the same $GL(6)$ elements.) Thus $\psi$ is a closed calibration form in the new metric $g.$ Moreover, $\Si$ and $P$ are calibrated with respect to $\psi$ in this new metric $g_q$ since by construction $(h_q'h_q)\m\equiv h_q\m$ on $P$ and $\Si$ and $h_q\m$ always sends the tangent plane to the oriented $x_1x_2x_3$-plane, which is calibrated by the special Lagrangian form $\phi.$
	
	Thus, to sum up, we have proven the following.
	\begin{prop}\label{oss}
		Let $\llbracket T\rrbracket =\llbracket P\rrbracket +\llbracket \Si\rrbracket $. Then there exists a small tube $B_r(x_3-\text{axis})$ in $\R^n$ with smooth metric $g_0$, so that  $\llbracket T\rrbracket $ is calibrated by $\psi$ (equation \ref{defp}) in that tube. The singular set of $T$ is the line segment $\ga$. Outside of $\{\frac{3}{2}r_0<x_3<2p_3-\frac{3}{2}r_0\},$ the calibration form $\psi$ coincides with the special Lagrangian form and $g_0$ coincides with $\de$. 
	\end{prop}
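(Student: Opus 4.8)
The plan is to simply assemble the four steps that were just carried out and verify that each claimed property in the statement is among the conclusions already reached. We work in the tubular neighborhood $B_r(\ga)$ of the $x_3$-axis, where $\ga$ is the line segment $\{0\le x_3\le 2p_3\}$ obtained in Step 1, and we take $r$ small and $p_3$ large (with $r_0=2^{-5}p_3$, say) so that all the smallness requirements coming from Remark \ref{rn} and Remark \ref{np} hold simultaneously. By Step 2 the surface $\Si$ is, in the region $\{2r_0\le x_3\le 2p_3-2r_0\}$, the graph of $\na\ov F$ with $\ov F=\vp F+(1-\vp)F'$, while outside $\Om$ it coincides with the honest cones $C$ (near $0$) and $C'$ (near $2p_3$); near $0$ and near $2p_3$ the union $\llbracket P\rrbracket +\llbracket C\rrbracket$, respectively $\llbracket P\rrbracket+\llbracket C'\rrbracket$, is calibrated by the ordinary special Lagrangian form $\phi$ with respect to $\de$, since $C,C',P$ are genuinely special Lagrangian. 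The singular set of $\llbracket T\rrbracket=\llbracket P\rrbracket+\llbracket\Si\rrbracket$ is exactly where $\Si$ meets $P$: along $C$ this is the ray $\ga^+$, along $C'$ it is $\ga^-$, and in the bridging region it is $\ga$ by the computation in Step 2 that $\na\ov F=0$, $\mathrm{Hess}\,\ov F=0$ along $\ga$ together with the transversality of the two fixed tangent planes (guaranteeing no extra intersection for $r_0$ small). Hence $\mathrm{sing}\,T=\ga$, the stated line segment.

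Next I would invoke Step 3 and Step 4 to produce the form and the metric. Step 3 gives the smooth map $q\mapsto h_q\in U(3)$ with $h_q$ rotating the oriented $x_1x_2x_3$-plane onto $T_q\Si$, extended off $\Si$ via the $Q$-coordinates of Lemma \ref{ncs}, and the pointwise calibration $\ov\phi_q=(h_q^{-1})^*\phi$, which calibrates $T_q\Si$ and every tangent plane to $P$ but fails to be closed, with $d\ov\phi$ controlled by $\no{\ov F}_{C^3}$. Step 4 then does three things: using Lemma \ref{ted} (so $d\ta|_P\equiv0$, hence $d\ov\phi|_P\equiv0$) and the homotopy operator of Lemma \ref{inte}/its Corollary in the $Q$-coordinates, it replaces $\ov\phi$ by the closed form $\psi$ of \eqref{defp}, which still equals $\ov\phi$ on $\Si$ and on $P$, equals $\phi$ on $B_r(\ga)\cap(\{r_0\le x_3\le\frac32 r_0\}\cup\{2p_3-\frac32 r_0\le x_3\le 2p_3-r_0\})$, and satisfies $\no{\psi-\phi}_{C^0}=O(\no{\ov F}_{C^3})$; then Theorem \ref{ift}, applied to the $C^0$-small form $h_q^*\psi$, yields a smooth $h_q'\colon B_r(\ga)\to GL(6)$ with $h_q^*\psi=((h_q')^{-1})^*\phi$, i.e. $\psi=((h_q'h_q)^{-1})^*\phi$, and with $h_q'=\id$ wherever $h_q^*\psi=\phi$, in particular on $\Si$, on $P$, and on the two end regions; finally one declares $g_0$ to be the metric with $g_{0,q}=((h_q'h_q)^{-1})^*\de$. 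Because $\psi$ and $g_0$ are simultaneous $GL(6)$-pushforwards of $\phi$ and $\de$, the comass of $\psi$ in $g_0$ is identically $1$, so $\psi$ is a closed calibration; and since $(h_q'h_q)^{-1}$ agrees with $h_q^{-1}$ on $\Si\cup P$ and $h_q^{-1}$ carries $T_q\Si$ and $T_qP$ to the $\phi$-calibrated plane $x_1x_2x_3$, the current $\llbracket T\rrbracket$ is calibrated by $\psi$ in $g_0$. Outside $\{\frac32 r_0<x_3<2p_3-\frac32 r_0\}$ we have $h_q\in SU(3)$ and $h_q'=\id$, so $\psi=\phi$ and $g_0=\de$ there, which is the last assertion.

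The only genuinely non-routine points are ones already disposed of in the preceding steps: that the bridged surface does not pick up spurious pieces of singular set (handled by the fixed-distinct-tangent argument in Step 2 together with the smallness of $r_0$), that $d\ov\phi$ restricts to zero on $P$ (this is exactly Lemma \ref{ted}, which in turn reduces to $H_\Si=0$ along $\ga$ via III.2.D of \cite{HLg}), and that the $GL(6)$ correction $h_q'$ can be taken trivial on the surfaces and on the collar regions (which follows from the "if $\tau=\phi$ then $h=\id$" clause of Theorem \ref{ift}). So the proof of Proposition \ref{oss} is essentially bookkeeping: choose the parameters $r,r_0,p_3$ so the smallness hypotheses of Theorem \ref{ift} are met, and then read off each listed conclusion from Steps 1--4. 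I expect the main obstacle, such as it is, to be purely expository — making sure the regions "$\Si$", "$P$", "the two collars", and "$\Om$" line up consistently across the four steps so that the closedness of $\psi$, the calibration identity, and the triviality of $h_q'$ all hold on the same sets.
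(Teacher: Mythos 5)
Your proposal is correct and follows the paper's proof essentially verbatim: Proposition \ref{oss} is stated in the paper as a summary of Steps 1--4 of Section \ref{ols}, and your argument reassembles exactly those steps (bridging the potentials, the $U(3)$-twisted prototype $\ov\phi$, the homotopy primitive $\psi$ via Lemmas \ref{ted} and \ref{inte}, and the $GL(6)$ correction via Theorem \ref{ift}) with the same supporting lemmas and smallness choices. The only quibble is cosmetic and inherited from the paper itself: with the pullback convention $A^*B^*=(BA)^*$, the composite should read $((h_qh_q')^{-1})^*\phi$ rather than $((h_q'h_q)^{-1})^*\phi$, but since $h_q'=\id$ on $\Si$, $P$, and the collar regions, this ordering does not affect any of the conclusions.
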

	
	\section{Creating a graph of singularities}
	Let $G$ be a connected finite graph. In this section, we will construct current with $G$ as the singular set and calibrate it in a neighborhood. (Proposition \ref{graph}).
	
	The idea is to use the construction in the previous section to realize every edge of the graph. However, we will show that every vertex degree can be realized. 
	\subsection{Every degree can be realized}\label{mexa}
	Notice that in the proof of Proposition \ref{oss}, we have only used the fact that away from the origin and near singular ray, we have only a plane $P$ and a cone $C$ intersecting along that ray. Thus, if we have planes $P_j$ pairwise intersecting $P$ only at the origin, then the picture is still true. This enables us to achieve every degree.
	
	We will modify the examples in Section \ref{exa}. Let $$R(a,b)=\textnormal{diag}\{e^{ia},e^{ib},e^{-i(a+b)}\}$$ denote an element of the diagonal subgroup of $SU(3).$ Recall that we use $\R$-linear row span to denote a plane, so this diagonal action $R$ in the standard basis on $\C^3$ acts on the right in our notation. 
	\begin{prop}\label{anydeg}
		Let $P$ denote the plane $P(0,\frac{\pi}{4})$ (Definition \ref{planep}). Then for any finite number $n,$ there exists exists planes $P_j=P.R(a_j,b_j)$ for $j=1,\cd,n$ with $a_1=b_1=0$ so that each $P_j$ only intersect the Harvey-Lawson cone $C$ along a ray and the pairwise intersection  $P_j\cap P_k$ is always trivial, i.e., $\{0\},$ for $j\not=k$. 
	\end{prop}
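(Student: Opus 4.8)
The plan is to exploit the transitivity of the diagonal $SU(3)$-action on the Harvey-Lawson torus $F^2$ together with the genericity of the intersection conditions. First I would recall from Section \ref{exa} that $P(0,\frac{\pi}{4})$ meets $C$ along exactly one ray $\ga_0$, and that this intersection is non-tangential. Since $P.R(a,b)$ is again special Lagrangian (as $R(a,b)\in SU(3)$), the question is which of the three open conditions
\begin{itemize}
\item[(i)] $P.R(a,b)$ meets $C$ along a ray (and not more),
\item[(ii)] $P.R(a,b)$ is non-tangential to $C$ along that ray,
\item[(iii)] $P.R(a_j,b_j)\cap P.R(a_k,b_k)=\{0\}$ for $j\neq k$,
\end{itemize}
can be simultaneously achieved for $n$ choices of parameters with $(a_1,b_1)=(0,0)$.

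For (i) and (ii): the set of $(a,b)$ for which $P.R(a,b)$ meets $C$ in exactly one ray transversally is open (the number of intersecting rays is upper semicontinuous under small perturbations when the intersection is transverse, and transversality itself is open), and it is nonempty because it contains $(0,0)$ by Lemma \ref{rays} and the remark following it. Hence there is a neighborhood $U$ of $(0,0)$ in $[-\pi,\pi]^2$ such that every $(a,b)\in U$ satisfies (i) and (ii). This already gives infinitely many planes, each of which intersects $C$ along a single ray non-tangentially; the remaining issue is the pairwise-trivial-intersection condition (iii).

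For (iii): two distinct special Lagrangian planes $P.R(a_j,b_j)$ and $P.R(a_k,b_k)$ intersect trivially unless the linear map $R(a_j,b_j)R(a_k,b_k)^{-1}$ — which is again a diagonal $SU(3)$ element $R(a_j-a_k,b_j-b_k)$ — sends the $3$-plane $P$ to a plane sharing a nonzero vector with $P$. The set of diagonal parameters $(\alpha,\beta)$ for which $P$ and $P.R(\alpha,\beta)$ have nontrivial intersection is a proper real-algebraic subset of the torus $[-\pi,\pi]^2$ (it is cut out by the vanishing of certain minors of the $6\times 6$ matrix whose rows are a basis of $P$ together with a basis of $P.R(\alpha,\beta)$), hence nowhere dense; call its complement $V$, an open dense set containing a punctured neighborhood of $(0,0)$. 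Now I would pick the parameters inductively: set $(a_1,b_1)=(0,0)$, and having chosen $(a_1,b_1),\dots,(a_m,b_m)\in U$ pairwise in $V$-position, choose $(a_{m+1},b_{m+1})$ in the nonempty open set $U\cap\bigcap_{j\le m}\big((a_j,b_j)+V\big)$, which is nonempty since $U$ is open and each translate of $V$ is open and dense. After $n$ steps we obtain the desired $P_1,\dots,P_n$.

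The main obstacle is verifying that the "nontrivial intersection locus" in the parameter torus is genuinely a proper subset — i.e., that condition (iii) is not vacuously violated. This reduces to checking that the real-algebraic condition is not identically satisfied, for which it suffices to exhibit a single pair $(\alpha,\beta)$ with $P\cap P.R(\alpha,\beta)=\{0\}$; by a dimension count (two generic special Lagrangian $3$-planes in $\R^6$ meet only at the origin) and the explicit form of $\pi_0$ this is a finite computation, which I would either carry out directly or defer to the Mathematica verification in Appendix \ref{mat} in the same spirit as Lemma \ref{rays}. Once that single nondegenerate pair is in hand, the algebraicity and the Baire-category argument above close the proof.
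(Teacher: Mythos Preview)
Your argument is correct, but the paper's proof is both simpler and more explicit, and the difference is worth noting.

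For (i)--(ii) you invoke openness of transverse single-ray intersection near $(0,0)$. The paper instead observes that the link $F^2$ of $C$ is an \emph{orbit} of the diagonal torus $R(a,b)$, so $C$ itself is $R$-invariant; hence $P.R(a,b)\cap C$ is the $R(a,b)$-image of $P\cap C$ and therefore a single ray for \emph{every} $(a,b)$, not merely for $(a,b)$ in some neighborhood. This eliminates conditions (i) and (ii) from the problem entirely and is a cleaner use of the symmetry.

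For (iii) you work with the full two-parameter family and a Baire-category inductive choice. The paper restricts to the one-parameter diagonal $t\mapsto R(t,t)$, writes the nontrivial-intersection condition as $\det A(t)=0$ for a real-analytic function of a single variable, verifies $\det A(\pi/4)\neq 0$ by an explicit matrix computation, and concludes that the zero set is discrete. It then sets $P_j=P.R\big(\tfrac{(j-1)m_0}{2n},\tfrac{(j-1)m_0}{2n}\big)$ where $m_0$ is the smallest positive zero; the pairwise condition reduces (after rotating by $R^{-1}$) to $\det A\big(\tfrac{(j-k)m_0}{2n}\big)\neq 0$, which holds since $|\tfrac{(j-k)m_0}{2n}|<m_0$. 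So the paper gets explicit planes rather than existence via genericity, and the single computation $\det A(\pi/4)\neq 0$ plays exactly the role of the ``one nondegenerate pair'' you flag as the remaining check. A small terminological point: the bad locus in $(\alpha,\beta)$ is real-\emph{analytic} (the entries of $R$ are exponentials), not real-algebraic, though your nowhere-dense conclusion is unaffected.
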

	\begin{proof}		
		Given that $P$ intersects $C$ only along a ray, if we rotate $P$ and $C$ simultaneously with $R(a,b),$ then the images $P.R$ and $C.R$ will still intersect along a ray.	Note that the link of $C$ is precisely the orbit of the diagonal subgroup of $SU(2),$ i.e., $R(a,b)$ for all $(a,b)\in\R^2.$ Thus, $C$ is invariant under $R.$ This shows that $P.R(a,b)$ and $C$ only intersect along a ray for all $a,b\in\R.$

		Let $t\in\R$ be a parameter. We will determine whether $P$ and $P.R(t,t)$ intersect nontrivially. Note that it suffices to check whether the system of three complex coefficient linear equations $$\begin{pmatrix}
			c_1&c_2&c_3
		\end{pmatrix}.P=\begin{pmatrix}
			c_4&c_5&c_6
		\end{pmatrix}.P.R(t,t),$$ with $c_j\in\R$ as unknowns, has nontrivial solutions over $\R.$ This system becomes $6$ equations over $\R$ (separating the real and complex parts of the coefficients). Let $$A(t)$$ denote the matrix formed by the coefficients of this system. Then it has nontrivial solutions if and only if $A$ is not injective, i.e., $$\det A=0.$$ 
		
		Note that $\det A(t)$ is a real analytic function (a polynomial in trigonometric functions) for $t\in\R$, so $(\det A)\m(0)$ must be either $\R,$ or a discrete subset of $\R$. Direct calculation shows that for $P=P(0,\frac{\pi}{4})$ (defined in Section \ref{exa}), we have
		\begin{align*}
			A(\frac{\pi}{4})=\left(
			\begin{array}{cccccc}
				\frac{1}{\sqrt{3}} & -\frac{1}{2} & \frac{1}{2 \sqrt{3}} &
				-\frac{1}{\sqrt{6}} & \frac{1}{\sqrt{2}} & 0 \\
				0 & \frac{1}{2} & \frac{1}{2 \sqrt{3}} & -\frac{1}{\sqrt{6}} & 0 &
				-\frac{1}{\sqrt{6}} \\
				\frac{1}{\sqrt{3}} & 0 & -\frac{1}{\sqrt{3}} & -\frac{1}{\sqrt{6}} & 0 & 0
				\\
				0 & 0 & -\frac{1}{\sqrt{3}} & -\frac{1}{\sqrt{6}} & 0 & \sqrt{\frac{2}{3}}
				\\
				\frac{1}{\sqrt{3}} & \frac{1}{2} & \frac{1}{2 \sqrt{3}} & 0 & \frac{1}{2}
				& -\frac{1}{2 \sqrt{3}} \\
				0 & -\frac{1}{2} & \frac{1}{2 \sqrt{3}} & \frac{1}{\sqrt{3}} & \frac{1}{2}
				& \frac{1}{2 \sqrt{3}} \\
			\end{array}
			\right),
		\end{align*} 
		Thus, $\det A(\frac{\pi}{4})=\frac{3024 \sqrt{2}-4752}{15552}\not=0,$ and $(\det A)\m(0)$ must be a discrete set. This implies that $(\det A)\m(0)\cap [-\pi,\pi)$ is a finite set. Let $$m_0=\min_{t\in (\det A)\m(0)\cap [-\pi,\pi),t\not=0}|t|.$$ Then denote
		\begin{align*}
			P_j=P.R(\frac{j-1}{2n}m_0,\frac{j-1}{2n}m_0).
		\end{align*}
		We claim that the planes $P_j$ for $j=1,\cd,n,$ only pairwise intersect at the origin. To determine the intersection of $P.R(\frac{j-1}{2n}m_0,\frac{j-1}{2n}m_0)$ and $P.R(\frac{k-1}{2n}m_0,\frac{k-1}{2n}m_0),$ note that by rotation of $R(-\frac{k-1}{2n}m_0,-\frac{k-1}{2n}m_0),$ it suffices to determine the intersection of $P.R(\frac{j-k}{2n}m_0,\frac{j-k}{2n}m_0)$ and $P.$ Thus, to show that $P_j$ and $P_k$ intersect only at the origin, it suffices to show that $\frac{j-k}{2n}m_0\not\in(\det A)\m(0).$ Since $|\frac{j-k}{2n}m_0|\le \frac{m_0}{2},$ we are done.
	\end{proof}
	\subsection{Every edge can be realized}
	Let $v_k$ denote the vertices of $G$ and $E_{l,k}=E_{k,l}$ denote the edge (if there is such an edge,) between $v_l$ and $v_k.$ For any vertex, let $n_k=\deg v_k$ and $\cu{C^k}+\sum_{j=1}^{n_k}\cu{P_j^k}$ be a realizing collection of cone and planes in Proposition \ref{anydeg}. Note that $P_j^k=P_1^k.R(\frac{j-1}{2n_k}m_k,\frac{j-1}{2n_k}m_k)$ for $m_k>0$. Thus for each pair $C^k\cap P_j^k$ and $C^l\cap P_m^l,$ they coincide up to $SU(3)$-rotations.
	
	Now we place the vertex $v_k$ and the conical point of the corresponding realizing collection $$T_k=\cu{C^k}\res B_{\frac{p_3}{2}}(v_k)+\sum_{j=1}^{n_k}\cu{P_j^k}\res B_{\frac{p_3}{2}}(v_k)$$ along the $x_3$-axis at points $(0,0,2kp_3),$ with $P_3$ as in Proposition \ref{oss}. Let $\ga^k_j$ denote the ray obtained by intersecting $C^k$ and $P^k_j.$
	
	First pick one arbitrary edge $E_{l,m}$ with $l<m$ from $G.$ We will realize $E_{l,m}.$  Pick the ray $\ga_1^l$ and $\ga_1^m$. By construction and Lemma \ref{coc} there exists $\rh_l,\rh_m\in SU(3),$ so that $\rh_l$ ($\rh_m,$ respectively) acting on the standard coordinate centered at $v_l=(0,0,2lp_3)$ (and $v_m,$ respectively), will make $P_1^l$ coincide with $P_1^m$ and the tangent planes to $C^l$ and $C^m$ along $\ga_1^l$ and $\ga_1^m$, away from the cone points, coincide to be the $x_1x_2x_3$-plane. Moreover, the intersecting rays $P_1^l\cap C^l$ and $P_1^m\cap C^m$ are both contained in the positive $x_3$-axis. Now if we rotate $\cu{C^m}+\sum_j\cu{P_j^m}$ with $\textnormal{diag}\{-1,1,-1\}$ (as in Step 1 in the proof of \ref{oss}), then the intersection $C^m\cap P_1^m\cap C^l\cap P_1^l$ will be a line segment from $v_l$ to $v_m.$ Since the other planes only intersect the segment at the cone points, we are in the situation of Proposition \ref{oss}.
	
	However, there are two caveats with the previous reasoning: we carry out two separate incompatible rotations, and the other cones are also sitting on the $x_3$ axis. First, we will resolve these two problems for the first edge. Then we induct to show this for every edge.
	\subsubsection{Carrying out the two rotations simultaneously}
	Recall that the exponential maps on compact Lie groups are surjective  (Corollary 11.10 in \cite{BH}). Thus, there exists $\rh_l^0,\rh_m^0\in\mathfrak{su}(3),$ so that 
	\begin{align*}
		\exp(\rh_l^0)=&\rh_l,\\
		\exp(\rh_m^0)=&\rh_m.\textnormal{diag}\{-1,1,-1\}.
	\end{align*} 
	Now let $b:\R\to\R$ be a non-negative function that is 1 on $[-\frac{p_3^2}{4},\frac{p_3^2}{4}]$, zero on $(-\infty,-\frac{9}{16}p_3^2]\cup[\frac{9}{16}p_3^2,\infty),$ and non-zero on $(-\frac{9}{16}p_3^2,-\frac{1}{4}p_3^2)\cup(\frac{1}{4}p_3^2,\frac{9}{16}p_3^2).$ Consider the smooth vector field $u^{l,m}$
	\begin{align}\label{rot}
		u^{l,m}(x)=b(|x-v_l|^2)[(x-v_l).\rh_l^0]+b(|x-v_m|^2)[(x-v_m).\rh_m^0],
	\end{align}
	and let $U_t^{l,m}(x)$ be the one-parameter family of diffeomorphisms associated with $u^{l,m}.$
	By construction, $u^{l,m}$ is supported in $B_{\frac{3}{4}p_3}(v_l)\cup B_{\frac{3}{4}p_3}(v_m).$
	\begin{figure}[h]
		\centering
		\includegraphics[width=0.7\linewidth]{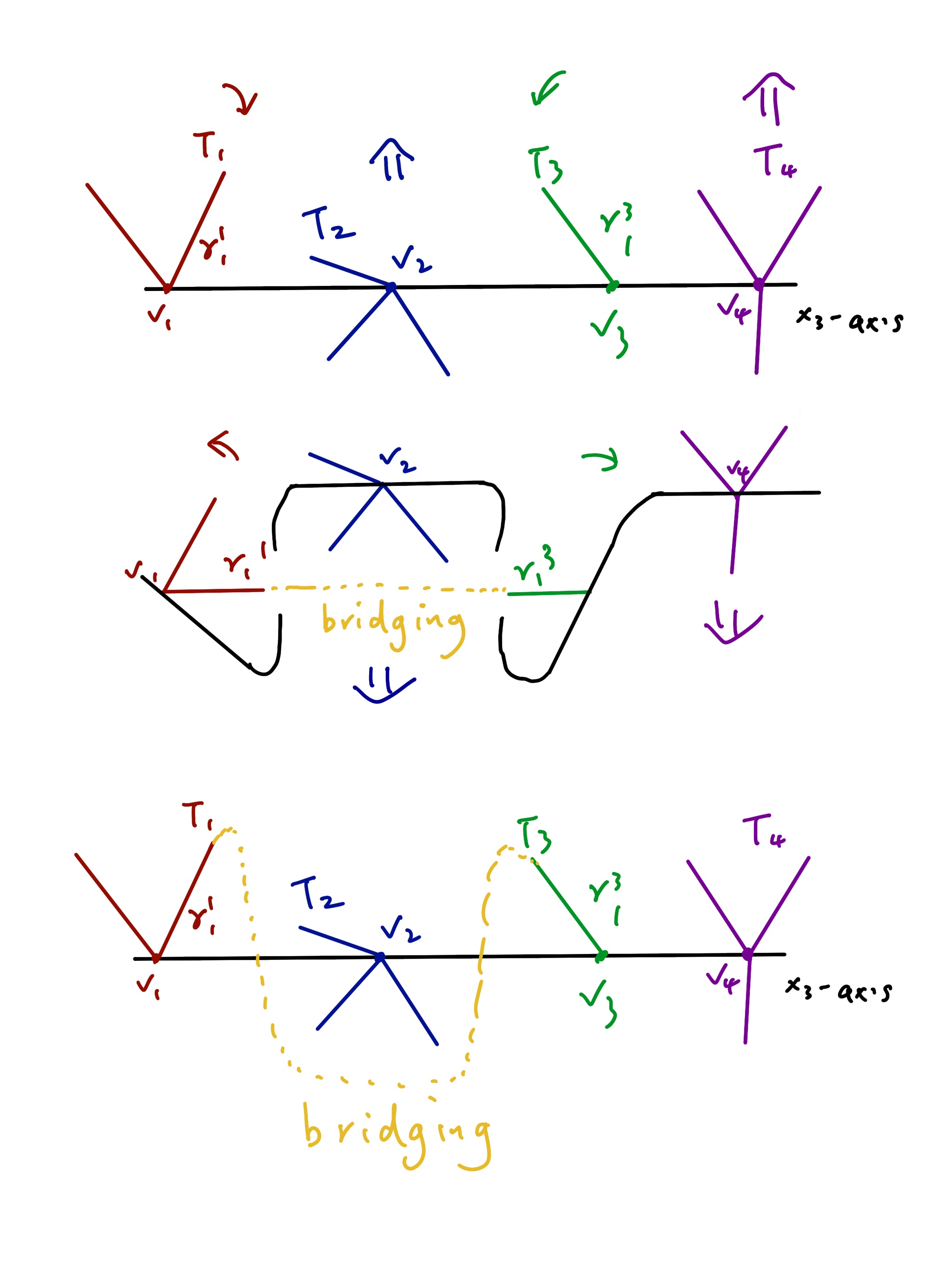}
		\caption{Pushing other vertices away to make space for forming the edges}
		\label{fig:pushing-things-around}
	\end{figure}
	On $B_{\frac{1}{2}p_3}(v_l)$, we have $u^{l,m}(x)=(x-v_l).\rh_l^0$. The vector field $(x-v_l).\rh_l^0$ integrates into the one parameter family of rotations $\exp(t\rh_l^0)$ around $v_l.$
	Now differentiating $1=\ri{x.e^{t\rh^0},x.e^{t\rh^0}}$ for $\rh_0\in\mathfrak{su}(3)$, we know that $\ri{x.\rh^0,x}=0.$ Thus, $u^{l,m}(x)$ is always tangent to spheres centered around $v_l$ or $v_k$ and $V_t$ will always preserve these spheres. Thus, $U^{l,m}_t(x)$ is  the rotation $\exp(t\rh_l^0)$ around $v_l$ on $B_{\frac{1}{2}p_3}(v_l).$ Same reasoning shows that $U^{l,m}_t(x)$ is the rotation $\exp(t\rh_m^0)$ on $B_{\frac{1}{2}p_3}(v_m).$ 
	
	To sum it up, the diffeomorphism $U^{l,m}_1$ coincides with the identity map outside of  $B_{\frac{3}{4}p_3}(v_l)\cup B_{\frac{3}{4}p_3}(v_m).$ And it rotates $T_l$ and $T_m$, so that  $P_1^l=P_1^m$ and $T_q C^l=T_{q'}C^l=x_1x_2x_3$-plane for $q\in \ga^l_1\setminus\{0\},q'\in \ga_1^m\setminus\{0\}$. Moreover, $\ga_1^l\res B_{\frac{p_3}{2}(v_m)}$ points towards the positive $x_3$-direction and $\ga_1^m$ points towards the negative $x_3$-axis. Also, since all the realizing collections stay in a ball of radius $\frac{p_3}{2}$ and are at least $2p_3$ away from each other, all the other realizing collections stay unchanged under $U^{l,m}_1.$
	\subsubsection{Pushing other vertices away}
	Now let $\nu_{l,m}$ be a unit vector orthogonal to the $x_3$-axis. Take the slice of $\R^2$ in $\C^3,$ spanned by the $x_3$ axis and $\nu_{l,m}$. Call this slice book page $\{l,m\}$. 
	
	Define the one parameter of diffeomorphisms $W_t$ as generated by the vector field 
	\begin{align}\label{push}
		w^{l,m}(x)=(1-b(|x-v_l|^2))(1-b(|x-v_m|^2))\nu_{l,m}
	\end{align}
	The effect of $W_t^{l,m}$ on $\C^3$ is simply pushing points outside of $B_{\frac{3}{4}p_3}(v_l)\cup B_{\frac{3}{4}p_3}(v_m)$ into the positive $\nu_{l,m}$-direction, thus forcing all the other realizing collections to leave a $t$ neighborhood of the $x_3$-axis.
	
	Now apply $W_{2p_3}^{l,m}\circ U_1^{l,m}.$ As the effect, we have moved every realizing collection outside of $T_l$ and $T_m$ into the positive $\nu_{l,m}$ direction by $2p_3,$ and also have rotated the realizing $T_l$ and $T_m$ into the right position for gluing along the rays $\ga_1^l$ and $\ga^m$. Now we prolong the truncated $C^m\res B_{\frac{1}{2}p_3}(v_l)$ into $C^m$ and $P^m_1\res B_{\frac{1}{2}p_3}(v_l)$ into $P^m_1,$ and similarly for $v_m,$ both around a small tubular neighborhood around the $x_3$-axis, so that the prolonged cones and planes never intersect the other realizing planes in this tube. This is doable since, for each vertex, the planes all intersect trivially at the origin. Then we apply Proposition \ref{oss} with $r_0=\frac{1}{2}p_3$ to glue $C^m$ to $C^l$ while keeping everything calibrated by a form $\psi_{m,l}$ in a metric $g_{m,l}$ which coincides with the standard special Lagrangian form $\phi$ and the standard metric around a ball of radius $\frac{p_3}{2}$ around $v_k$ and $v_l.$ Finally we apply $(U_1^{l,m})\m \circ (W_{2p_3}^{l,m})\m,$ and pullback the form $\psi_{m,l}$ and the metric $g_{m,l}$ to $((U_1^{l,m})\m\circ (W_1^{l,m}))\m B_{r_{l,m}}([v_l,v_m]).$  This gives a calibrated current bridging $C^m\cup P^m_1$ to $C^l\cup P^l_1$ and the singular set together forming the $E_{l,m}.$ 
	
	Moreover, note that this curved $E_{l,m}$ stays in the book page $\{l,m\}$ and is at least $2p_3$ away from $x_3$-axis on $(B_{p_3/2}(v_l)\cup B_{p_3/2}(v_m))\cp$ and at least $\frac{3}{2}p_3$ away from all the other vertices. Thus if we shrink the gluing radius, we can ensure that the bridging current also stays at least $p_3$ away from $x_3$-axis on $(B_{p_3/2}(v_l)\cup B_{p_3/2}(v_m))\m$ and always at least $p_3$ away from all the other vertices.
	\begin{figure}[h]
		\centering
		\includegraphics[width=0.7\linewidth]{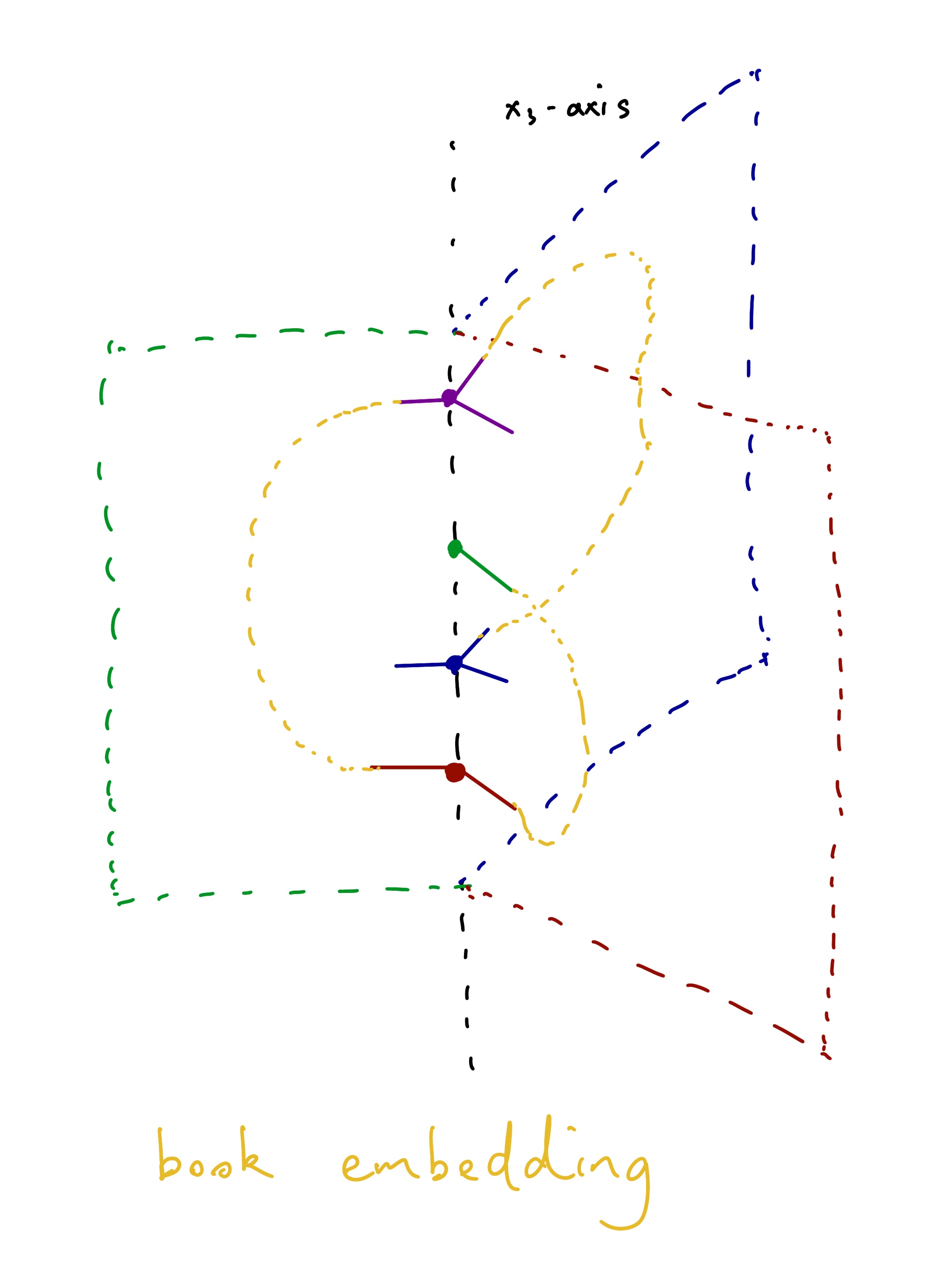}
		\caption{The book embedding of a graph (\textbf{Warning:} this is only a schematic representation up to twisted diffeomorphisms)}
		\label{fig:book-embedding}
	\end{figure}
	\subsubsection{Induction for all edges}
	Now we want to repeat this process for every edge. We do it by induction. Suppose we have already constructed $k$ different edges, which do not intersect along the bridging part. Moreover, suppose both $E_{l,m}$ and the bridging current stays at least $p_3$ away from $x_3$-axis on $(B_{3p_3/4}(v_l)\cup B_{3p_3/4}(v_m))\cp$ and always at least $p_3$ away from all the other vertices. (For our first edge, note that the diffeomorphism $U_1^{l,m}$ is identity when at least $3p_3/4$ away from $v_l$ and $v_m.$ The diffeomorphism $W_{2p_3}^{l,m}$ pushes everything at least $3p_3/4$ away, into the $\nu_{l,m}$ direction by $2p_3$.) Now suppose we want to construct a new edge $E_{a,b}.$ Again, using the elements of Lie algebra, we can construct a smooth diffeomorphism $U^{a,b}_1$ generated by smooth vector fields $u^{a,b}$ as in (\ref{rot}), that rotates the realizing collection $T_a,T_b$ into position ready for gluing. Suppose the two truncated rays for gluing are $\ga^a_1,\ga^b_1.$
	
	Note that in this process, it is possible that some edge emitting from $v_a,v_b$ might cross the $x_3$-axis, on $(B_{p_3/2}(v_a)\cup B_{p_3/2}(v_b))\cp$ (By induction hypothesis the bridging current emitting from other vertices will stay at least $p_3$ away from $v_a,v_b,$ thus unchanged under $U^{a,b}_1.$) 
	
	We now need to choose a unit normal vectors $\nu_{a,b}$ to the $x_3$-axis, in order to span the book page $\{a,b\}$ (slice of $\R^2$) together with the $x_3$-axis. 
	\begin{lem}
		There exists an $\nu_{a,b}$, so that all the $T_j$ and bridging parts only intersect book page $\{a,b\}$ along the $x_3$-axis.
	\end{lem}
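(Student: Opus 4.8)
The plan is to choose $\nu_{a,b}$ generically among the unit vectors orthogonal to the $x_3$-axis, and show that a generic choice avoids all the finitely many "bad" configurations coming from the already-placed pieces. First I would enumerate the obstacles: the realizing collections $T_j$ (finitely many cones and planes, all contained in balls $B_{p_3/2}(v_j)$), and the $k$ already-constructed bridging currents, each of which sits inside a book page $\{l,m\}$ except on the gluing balls around its endpoints. I want the book page $\{a,b\}=\mathrm{span}\{x_3\text{-axis},\nu_{a,b}\}$ to meet each of these sets only along the $x_3$-axis.

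The key observation is a dimension count. Fix a piece $Z$ (a $T_j$ or a bridging current) and look at its intersection with the sphere bundle over the $x_3$-axis; away from the $x_3$-axis itself, the set of directions $\nu$ (a point in the unit sphere $S^4$ of the orthogonal complement $\R^5$ of the $x_3$-axis) for which the $2$-plane $\mathrm{span}\{x_3\text{-axis},\nu\}$ meets $Z$ in a point off the axis is the image under radial projection of $Z\setminus(x_3\text{-axis})$ onto $S^4$. Since each $Z$ is $3$-dimensional and the radial projection to $S^4$ composed with the further projection killing the $x_3$-component is a smooth map from a $3$-manifold, its image is a set of Hausdorff dimension at most $3<4=\dim S^4$, hence has empty interior and in particular is not all of $S^4$. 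Taking the (finite) union over all pieces $T_j$ and all $k$ bridging currents, the union of bad directions still has dimension at most $3$, so its complement in $S^4$ is nonempty (indeed dense). Any $\nu_{a,b}$ in this complement works: by construction the resulting book page $\{a,b\}$ meets every $T_j$ and every bridging part only along the $x_3$-axis. I would also record that I may further shrink the complement by finitely many more closed measure-zero conditions — e.g.\ requiring $\nu_{a,b}$ to differ from all previously chosen $\nu_{l,m}$ and to be transverse to the relevant pieces — and still have a valid choice, which is what is needed for the subsequent step of pushing the other vertices into the positive $\nu_{a,b}$-direction without creating new intersections.

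The main obstacle I expect is bookkeeping rather than a genuine difficulty: one must be careful that the bridging currents, although they lie in their own book pages away from the gluing balls, are genuinely $3$-dimensional and compactly supported near $[v_l,v_m]$, so that the radial-projection image is controlled; and one must handle the parts of those currents inside the gluing balls $B_{p_3/2}(v_l)\cup B_{p_3/2}(v_m)$, where the current is a (perturbed) union of a cone and planes rather than something confined to a $2$-plane — but these are again $3$-dimensional, so the same dimension count applies. A secondary subtlety is that we need the book page to meet the pieces \emph{only} along the $x_3$-axis and not merely transversally; since the $x_3$-axis lies in every piece $T_j$ (it contains the gluing rays) and in the relevant bridging currents near their endpoints, the honest statement is that the \emph{additional} intersection locus is empty, which is exactly what the dimension count gives. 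I would close by noting that after this choice, the diffeomorphism $W_t^{a,b}$ from (\ref{push}) pushes all other realizing collections and bridging parts off the $x_3$-axis into the positive $\nu_{a,b}$-direction, so the induction step proceeds exactly as in the first edge, completing the construction of all edges of $G$.
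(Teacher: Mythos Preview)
Your proposal is correct and is essentially the paper's own argument: project the support of all $T_j$ and bridging parts (a finite union of $3$-dimensional pieces) to the orthogonal complement of the $x_3$-axis and then radially to $S^4$ (the paper uses $RP^4$, an immaterial difference), observe the image has Hausdorff dimension at most $3<4$, and pick $\nu_{a,b}$ in the nonempty complement. Your additional bookkeeping remarks about the gluing balls and the ``only along the $x_3$-axis'' phrasing are accurate and go slightly beyond what the paper spells out, but the core idea is identical.
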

	\begin{proof}
		Let $\pi_3$ be the projection into the orthogonal complement of $x_3$-axis. Then let $\pi_{5}$ be the projection from $\R^5\setminus\{0\}$ to the $4$-dimensional real projective space $RP^4,$ i.e., modulo scalar multiplication.  Choosing $\nu_{a,b}$ up to the equivalence of its span with $x_3$-axis, amounts to choosing a point $w\in RP^4.$ Let $S$ denote the support of all $T_j$ and the bridging parts. Note that $S$ is a union of 3-dimensional smooth manifolds with boundary and cone parts. Since $\pi_5\circ\pi_3$ is a locally Lipschitz map, the Hausdorff dimension of $\pi_5\circ\pi_3(S)$ is at most a $3.$ Thus, $\pi_5\circ\pi_3(S)\cp$ is non empty	. For any point $x$ in the complement of $\pi_5\circ\pi_3(S)$, the corresponding plane spanned by the $x_3$-axis and $x$ cannot intersect $S$ outside of the $x_3$-axis. Thus, we can take any point in $\pi_5\m( x)\cap S^4.$ 
	\end{proof}
	Choose $\nu_{a,b}$ as in the above lemma. Then since the bridging parts coming from other edges are closed and do not intersect $\ga_1^a,\ga_1^b,$ we deduce that their intersections with the book page are at least some small distance, say $\e_{a,b},$ away from $\ga_1^a,\ga_1^b,$ so that we can push everything in between away. Again we use a diffeomorphism $W_t$ generated by vector fields like (\ref{push}). However, the auxiliary function $b$ has to be modified, with value 1 on $[-\frac{p_3^2}{4},\frac{p_3^2}{4}]$, zero on $(-\infty,-\frac{(p_3+\e_{a,b})^2}{4}]\cup[\frac{(p_3+\e_{a,b})^2}{4},\infty),$ and non-zero on $(-\frac{(p_3+\e_{a,b})^2}{4},-\frac{p_3^2}{4})\cup(\frac{p_3^2}{4},\frac{(p_3+\e_{a,b})^2}{4}).$ Moreover, the time with $t_a$ is large enough, so that on the $x_3$-axis in the page $\{a,b\}$, everything in $(B_{p_3/2+\e_{a,b}}(v_a)\cup B_{p_3/2+\e_{a,b}}(v_b))\cp$ is pushed into the $\nu_{a,b}$ for at least $2p_3.$ Now we do the gluing construction as in Proposition \ref{oss}, and then pullback via $(W^{a,b}_{t_a}\circ U^{a,b}_1)\m$ to get the twisted bridging current in the page $\{a,b\}$ and satisfies all the induction hypothesis.
	\begin{rem}\label{bdry}
		Recall that in the gluing process, only two things happen: two planes are glued together and two cones are glued together. Note that $G$ is connected in this section, so all cones are glued together with multiple bridges. We call this glued-together conical surface $\Si.$ The planes are only glued together pair by pair. Call these glued-together pairs of planes $Q_j.$ Moreover, by shrinking the radius $r$ of $B_r(G)$ in the Euclidean metric, we can ensure that $\Si$ and $Q_j$ are calibrated in $B_r(G)$ with respect to the new metric and calibration form. By Remark \ref{rn}, we can assume that $\pd \Si$ and $\pd Q_j$ are smooth surfaces. 
	\end{rem}
	
	To sum it up, we have proven the following.
	\begin{prop}\label{graph}
		For any finite graph $G,$ there exists an embedding of $G$ into $\R^6,$ so that there exists a smooth metric $g$ and a calibration form $\psi$ with respect to $g,$ both defined in a neighborhood of $G.$ And $G$ is the singular set of a $\psi-$calibrated current in metric $g,$ \begin{align*}
			T=\cu{\Si}\res B_r(G)+\sum_l \cu{Q_l}\res B_r(G),
		\end{align*}
		where $T$ coincides with a special Lagrangian current near each of its vertices. Each $Q_l$ is smooth and $\Si$ has isolated singularities modeled on special Lagrangian cones.
	\end{prop}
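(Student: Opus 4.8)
The plan is to assemble the statement from the pieces already developed, with the only genuine work being the bookkeeping of how the finitely many diffeomorphisms and gluings interact. First I would fix an enumeration $E_1,\dots,E_N$ of the edges of $G$ and carry out the edge-by-edge inductive construction described above: place one realizing collection $T_k=\cu{C^k}\res B_{p_3/2}(v_k)+\sum_j\cu{P_j^k}\res B_{p_3/2}(v_k)$ (Proposition~\ref{anydeg}) at each vertex $v_k=(0,0,2kp_3)$ on the $x_3$-axis, and for each edge $E_{l,m}$ in turn apply the composition $W^{l,m}_{t}\circ U^{l,m}_1$ to rotate the two relevant collections into the compatible configuration of Lemma~\ref{cocf} and push all other pieces off the $x_3$-axis into the page $\{l,m\}$, then invoke Proposition~\ref{oss} (with $r_0=\tfrac12 p_3$) to glue $C^m\cup P^m_1$ to $C^l\cup P^l_1$ along the segment $[v_l,v_m]$, producing a form $\psi_{m,l}$ and metric $g_{m,l}$ agreeing with $\phi,\de$ near $v_l,v_m$, and finally pull everything back by $(W^{l,m}_t\circ U^{l,m}_1)\m$. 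The induction hypothesis to propagate is exactly the quantitative separation already stated: each previously built (curved) edge and its bridging current stays at least $p_3$ from the $x_3$-axis outside $B_{3p_3/4}(v_l)\cup B_{3p_3/4}(v_m)$ and at least $p_3$ from every other vertex, so that the diffeomorphisms used for the next edge, being supported in balls of radius $\tfrac34 p_3$ about two vertices (plus a pushing direction), leave all earlier bridges untouched.

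Next I would record what the final object looks like after all $N$ edges are processed. By Remark~\ref{flat} and the structure of Proposition~\ref{oss}, each gluing step only ever glues a plane to a plane or a cone to a cone; since $G$ is connected, all the cones $C^k$ get welded into a single conical surface, which I call $\Si$, with isolated singularities each modeled on a special Lagrangian cone (the Harvey--Lawson cone, in local coordinates), while the planes are welded only in pairs, giving smooth surfaces $Q_l$. The current is then $T=\cu{\Si}\res B_r(G)+\sum_l\cu{Q_l}\res B_r(G)$, and its singular set is precisely the union of the segments/curves realizing the edges together with the conical vertices—i.e.\ an embedding of $G$—because the extra planes at each vertex meet only at the cone point, the pushing diffeomorphisms keep distinct edges disjoint away from shared vertices, and Lemma~\ref{coc}/Lemma~\ref{rays} guarantee the intersecting rays at each vertex are pairwise non-tangent, so the embedding conditions of the relevant definition hold.

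The remaining point is to make the finitely many locally defined forms and metrics into a single metric $g$ and calibration $\psi$ on one neighborhood $B_r(G)$. Here I would shrink $r$: each gluing construction produces $\psi_{m,l},g_{m,l}$ that equal $\phi,\de$ outside the transition zone $\{r_0<x_3<2p_3-r_0\}$ of that gluing, and the transition zones for different edges have disjoint supports once $r$ is small (they live in different pages and near disjoint segments), so the forms patch together consistently, with $\psi\equiv\phi$, $g\equiv\de$ near every vertex and outside all transition zones. Finally, as in Remark~\ref{rn} the cones become arbitrarily flat along their rays, so for $r$ small $\pd\Si$ and each $\pd Q_l$ are smooth hypersurfaces-with-boundary, and by construction the comass of $\psi$ in $g$ is $1$ with $\Si,Q_l$ calibrated.

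I expect the main obstacle to be the induction step's disjointness/separation claim: one must be sure that rotating the two vertices for a new edge does not drag a previously constructed curved edge across the $x_3$-axis or tangentially into a shared-vertex ray, and that the choice of page $\nu_{a,b}$ (the Sard-type lemma in the excerpt) can always be made compatibly with all earlier pages. Everything else—the calibration on each local piece, the smoothness of $\pd\Si,\pd Q_l$, the embedding conditions—follows from Propositions~\ref{oss} and~\ref{anydeg}, Lemmas~\ref{coc}--\ref{cocf} and~\ref{rays}, and the flatness estimate of Remark~\ref{rn}, so the proof is essentially a careful assembly rather than a new idea.
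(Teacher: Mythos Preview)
Your proposal is correct and follows essentially the same approach as the paper: place realizing collections along the $x_3$-axis via Proposition~\ref{anydeg}, then for each edge apply the compactly supported diffeomorphisms $U^{l,m}_1$ and $W^{l,m}_t$ to rotate and push, glue via Proposition~\ref{oss}, pull back, and induct while maintaining the quantitative separation of previously built bridges from the $x_3$-axis and from other vertices. The obstacle you flag---choosing the page direction $\nu_{a,b}$ so that earlier bridges miss the new page---is exactly the content of the dimension-counting lemma the paper proves in the induction step, and your identification of $\Si$ and the $Q_l$ matches Remark~\ref{bdry}.
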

	\section{	Proof of Theorem \ref{anyg}}
	Recall that in all previous sections (Proposition \ref{oss} and \ref{graph}), the surfaces $T=\cu{\Si}\res B_r(G)+\cu{Q_l}\res B_r(G)$ that we consider stay in a tubular neighborhood $B_r(G)$ of $G,$ and have boundaries in $\pd B_r(G).$ Moreover, $G$ is connected. To prove Theorem \ref{anyg}, we need to make the surfaces complete, connect all possible components and glue them to a homology class on the target manifold $M.$
	\subsection{Making the surfaces complete and connected}\label{add}
	Let us first deal with $G$ connected. Notice that $\pd \Si$ and $\pd Q_l$ are all embedded orientable surfaces and the surfaces are smooth near the boundaries. Thus, one can always find embedded orientable $3$-manifolds $\Si'$, $Q_l'$ so that $\pd \Si'=\pd \Si$ and $\pd Q_l'=\pd Q_l$ and the primed notation ones extend the original ones smoothly along their common boundary. Note that we can decompose $G$ into unions of line segments, i.e., $1$-d manifolds. Thus, by Theorem 4.5.6 (Transversality Theorem) in \cite{CWd}, we deduce that after a generic perturbation, we can assume that the current $$T'=\cu{\Si'}-\cu{\Si}+\sum_l \cu{Q_l}-\cu{Q_l'}$$ has only $G$ and some isolated points $q_1,\cd,q_n$ as the singular set. 	Near $G.$ $T'$ coincides with $T.$ Moreover, around any point $q_j,$ $T$ decomposes into the sum of two oriented disks $D_1+D_2$. 
	
	Now, apply Lemma \ref{ncs} to $D_1$, and arbitrary sections in the normal bundle of $D_1$. We get a coordinate system with $D_1$ being the $x_1x_2x_3$ coordinate plane, and $D_2$ a surface transversal to $D_1.$ Then we apply Lemma \ref{ncs} to $D_2$ in this coordinate, with $\pd_1,\cd,\pd_3$ being the vector fields. In this new coordinate and restricting to smaller disks, we can assume $D_1+D_2$ is just the sum of radius $\e$ oriented disks in the $x_1x_2x_3$ and $y_1y_2y_3$ planes for some $\e>0$. Now we can replace $D_1+D_2$ with a smooth manifold $K$ so that $\pd K=\pd D_1+\pd D_2$ and $K\s D_1\times D_2$ (For example, consider $K$ parameterized by $K=\{(\e e^{it}\om,\pm \e e^{it}\om)|\om\in S^1,t\in [0,\frac{\pi}{2}]\}$. The $\pm$ is to account for orientations.) Replace $D_1+D_2$ with $K$ and then smoothly make the transition along the boundary.
	
	As a result we get a boundaryless current $T''$ whose singular set is $G$ and near $G$ coincides with $T.$ 
	
	For $G$ with several connected components $G_1,\cd,G_n,$ carry out the construction above and embed each result in a ball of radius large enough in $\R^6.$ No do a connected sum of the resulting $T_1'',T_2'',\cd$ on the smooth parts to get $T''=T_1''\#\cd\# T_n''.$ Then again $T''$ is boundaryless and has $G$ as the singular set and near $G$, and $T''$ is calibrated in a smooth metric by a smooth form $\phi.$ 
	\subsection{Gluing via Zhang's constructions}
First of all, the reader is suggested to read Section \ref{zhang}.

	Since $b_3\not=0,$ by Corollaire II.30 in \cite{RT}, there exist a nontrivial $3$-d homology class $[N_0]$, so that it has  a smoothly embedded oriented representative $N_0.$ 
	
	Note that the $T''$ in the previous section lies in a compact set in $\R^6,$ so we can embed it into the standard $6$-sphere $S^6$.
	
	Now we can do a simultaneous connected sum of $N_0$ with (a neck to the smooth part of) $T''$ and $M$ with $S^6.$ Note that $N\# S^6$ is diffeomorphic to $N$ and $[N_0\# T'']=[N_0]$ as homology classes by homotopy invariance.
	
	Moreover, we have a smooth calibration and a smooth metric around the singular set of $N_0\# T''.$ Now we want to apply Zhang's construction (Section \label{Zhang}) to get a global metric and a calibration form. Roughly speaking, we first glue the calibration of the singular part to a calibration in the normal bundle on the smooth part. Then we make the metric away from $N_0\# T''$ large to make minimizers lie near $N_0\# T''$. Finally, we find a closed form $\psi$ with $N_0\# T''(\psi)=1$ and transit the local calibration to $\psi.$ The detailed proof is contained in Lemma 2.7 in \cite{ZL1}.

For the assumptions Lemma 2.7 in \cite{ZL1}, most conditions are easy to check. We will just mention two that take some effort. Condition 1.c.i follows from Lemma 2.10 in \cite{ZL1}. It is straightforward to check that our $N_0\# T''$ admits a Whitney stratification. By Lemma 2.8 in \cite{ZL1}, Condition 2.e.i is satisfied. Thus, Lemma 2.7 in \cite{ZL1} can indeed be applied here and we are done.
	\section{Gluing of pairs of cones and planes with different angles}\label{rott}	This section is not used in the proof of the main theorems but contains an enhancement of the construction in the previous sections.
	
	The following question has been raised to the author by many people, including but not limited to Professor Max Engelstein, Professor Nick Edelen, and Professor Yongsheng Zhang:
	
	\textit{"Given an embedded finite graph $G$ with straight line edges in $\R^6,$ is it possible to carry out the above construction to make $G$ the singular set?"} 
	
	First of all, one needs to have a collection of special Lagrangian cones and planes realizing each vertex. It turns out this is sufficient to give a positive answer to the above question.
	
	However, general pairs can have intersection angles that are not equal, i.e., not reflections of each other. Thus, the gluing construction in the previous sections needs modifications. We will show how to modify the constructions and later this will be used in an extension of \cite{ZL}.
	%	The construction in the previous sections deals with a cone and a reflected copy of the cone. 	Instead of gluing the reflected cone to the original one as mentioned in the introduction, we use a more general approach to glue any two pairs of cones and planes together. The precise statement is as follows.
	
	Suppose we have two sets of special Lagrangian cones $C_1,C_2,$ and special Lagrangian plans $P_1,P_2.$ We assume that $C_l\cap P_l$ consists of several different rays only and the intersection along each ray is non-tangentially. Now pick a ray $\ga_1$ from $C_1\cap P_1$ and $\ga_2$ from $C_2\cap P_2.$ Our aim in this section is to prove the following.
	\begin{prop}\label{oe1}
		Using translations and simultaneous $SU(3)$ rotations on each pair $C_l\cup P_l$, we can place $C_1\cup P_1$ and $C_2\cup P_2$ into $\R^6$ so that $P_1=P_2.$ The vertex of $C_1$ is at $0,$ and the vertex at $C_2$ is at $(0,0,2p_3)\s \C^3$ with arbitrary $p_3>0$. $\ga_1$ is the nonnegative $x_3$-axis, while $\ga_2$ is contained in the $x_3$-axis and pointing towards the nonpositive $x_3$ axis. Denote $\ga=\ga_1\cap \ga_2.$ We have the following.
		\begin{itemize}
			\item in a tubular neighborhood $B_r(\ga),$ we can glue $C_1$ to $C_2$ smoothly to a current $C_{12}$;
			\item the glued together $C_{12}$ is singular only at the conical points of $C_1,C_2$ and the singular set of $T=C_{12}+P_1$ is precisely $\ga$ union with all the other rays in $C_1\cap P_1,C_2\cap P_2$ (suitably truncated) except for $\ga_1,\ga_2;$
			\item there exists a smooth differential form $\psi$ in a tubular neighborhood $B_r(\ga),$ which is a calibration form with respect to a smooth metric $g;$
			\item $\psi$ calibrates $C_{12}+P_1$. 
		\end{itemize}
	\end{prop}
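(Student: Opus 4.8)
The plan is to reduce Proposition \ref{oe1} to the already established Proposition \ref{oss}, modifying only those steps where the reflection symmetry between $C$ and $C'$ was used. First I would apply Lemma \ref{cocf} to the two pairs $C_1\cup P_1$ and $C_2\cup P_2$ along the chosen rays $\ga_1,\ga_2$. This produces the required placement: after simultaneous $SU(3)$ rotations on each pair, $P_1=P_2$ equals the $y_1y_2x_3$-plane, the tangent planes to $C_1,C_2$ along the respective rays have the form (\ref{gpm}) with parameters $\rh_1,\rh_2$ (possibly distinct), and $\ga_1$, $\ga_2$ lie on the positive and negative $x_3$-axis respectively. Then translate $C_2\cup P_2$ by $(0,0,2p_3)$; the intersection $\ga=\ga_1\cap\ga_2$ is the segment $\{0\le x_3\le 2p_3\}$ on the $x_3$-axis, and near $\ga$ both $C_1$ and $C_2$ are graphs over the $x_1x_2x_3$-plane of gradients of potentials $F_1,F_2$ whose $C^k$ norms are as small as desired once $r$ is small and $p_3$ large (Remark \ref{rn}).

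Next I would carry out the bridging exactly as in Step 2, but with the two potentials now asymmetric: set $\ov F=\vp F_1+(1-\vp)F_2$ with $\vp$ depending only on $x_3$. The crucial structural facts are still available: each $C_l$ intersects $P$ only along the $x_3$-axis and is tangent there to the $x_1x_2x_3$-plane, so $F_l=0$, $\na F_l=0$, $\mathrm{Hess}\,F_l=0$ along $\ga$ — this used the parametrization (\ref{gpm}), not any reflection. Hence $\na\ov F=0$ and $\mathrm{Hess}\,\ov F=0$ along $\ga$, so $\gr\na\ov F$ meets $P$ along $\ga$ with tangent $x_1x_2x_3$-plane there; denote the resulting surface (joined to $C_1,C_2$ outside the transition region) by $C_{12}$. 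Its singular set is just the two cone points, and the singular set of $T=C_{12}+P_1$ is $\ga$ together with the truncated remaining rays of $C_1\cap P_1$ and $C_2\cap P_2$, since those other rays meet the $x_3$-axis only at the respective cone points and $P$ is invariant under the rotations used (we never applied $\mathrm{diag}\{-1,1,-1\}$ to the other planes in a harmful way; if it was used in Lemma \ref{cocf} to flip $\ga_2$, it still preserves $P$ and the family of planes as in Remark \ref{remr}).

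For the calibration form and metric I would reuse Steps 3 and 4 verbatim, since nothing there depended on $F_1=F_2$. Because $\gr\na\ov F$ is Lagrangian and $C^2$-close to the $x_1x_2x_3$-plane, I can assign smoothly a map $h_q\in U(3)$ sending $x_1x_2x_3$-plane to $T_qC_{12}$, extend it off $C_{12}$ along the $Q$-coordinates of Lemma \ref{ncs} so that $h_q\in SU(3)$ on $P$ and near the two ends, set $\ov\phi_q=(h_q^{-1})^*\phi$, and then produce a closed form $\psi$ by the homotopy-operator Corollary after Lemma \ref{inte}, with $\psi\equiv\ov\phi$ on $C_{12}$, on $P$, and near the ends. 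The only place asymmetry enters is Lemma \ref{ted}: I must recheck that $d\ta|_P\equiv 0$, equivalently that $H_{C_{12}}=0$ along $\ga$. Repeating the mean-curvature computation in that lemma with $\ov f(x)=(x,\na(\vp(F_1-F_2)+F_2))$ gives $H_{C_{12}}=\vp H_{C_1}+(1-\vp)H_{C_2}$ along $\ga$, and since $C_1,C_2$ are minimal this vanishes — so the argument goes through with $F_1,F_2$ in place of $F,F'$, no reflection needed. Finally, Theorem \ref{ift} converts $h_q^*\psi$ into $((h_q')^{-1})^*\phi$ for a smooth $h_q'\in GL(6)$ equal to the identity on $C_{12}$, $P$, and the ends, and setting $g_q=((h_q'h_q)^{-1})^*\de$ makes $\psi$ a closed calibration in $g$ calibrating $C_{12}+P_1$.

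The main obstacle is confirming that the structural identities forced by reflection in Proposition \ref{oss} — namely $F_l,\na F_l,\mathrm{Hess}\,F_l$ all vanishing along $\ga$, and $H_{\Si}=0$ along $\ga$ — genuinely survive when $F_1\ne F_2$. As sketched above they do, because each identity is really a consequence of the common tangent-plane parametrization (\ref{gpm}) secured by Lemma \ref{cocf} and of the minimality of each individual cone, rather than of any symmetry relating the two pieces; so the expected difficulty is bookkeeping rather than a new idea. A secondary point to verify carefully is that after the $SU(3)$ rotations from Lemma \ref{cocf} the \emph{other} rays of $C_1\cap P_1$ and $C_2\cap P_2$ still meet the common axis $\ga$ only at the cone points, so that the singular set of $T$ is exactly as claimed; this follows from non-tangential intersection and the fact that distinct rays of a cone through $0$ meet only at $0$.
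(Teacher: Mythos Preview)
Your reduction to Proposition \ref{oss} breaks at the sentence ``each $C_l$ \dots\ is tangent there to the $x_1x_2x_3$-plane, so $F_l=0,\ \na F_l=0,\ \mathrm{Hess}\,F_l=0$ along $\ga$.'' After Lemma \ref{cocf} the plane $P$ is the $y_1y_2x_3$-plane, and the tangent plane to $C_l$ along $\ga_l$ is \emph{not} the $x_1x_2x_3$-plane but the tilted special Lagrangian plane (\ref{gpm}), namely $\gr\na\frac{\rh_l(x_1^2-x_2^2)}{2}$. Hence, writing $C_l$ as $\gr\na F_l$ over the $x_1x_2x_3$-plane, one has $\mathrm{Hess}\,F_l=\dia(\rh_l,-\rh_l,0)$ along $\ga$, which is nonzero whenever $\rh_l\ne 0$. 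This is exactly the obstruction flagged in the Remark following Lemma \ref{cocf}: for $\rh_1\ne\rh_2$ the two tangent planes do not coincide, and no single $SU(3)$ rotation will make both equal to $x_1x_2x_3$.

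This error propagates to your mean-curvature step. The computation in Lemma \ref{ted} that yields $H_\Si=\vp H_C+(1-\vp)H_{C'}$ begins from $g_{ij}=\de_{ij}$ on $\ga$, which is equivalent to $\mathrm{Hess}\,F=0$ there; with $\mathrm{Hess}\,F_l\ne 0$ the induced metric is $\dia(1+\rh_l^2,1+\rh_l^2,1)$ and the derivation no longer goes through line by line. Moreover the third derivatives of $\ov F=\vp F_1+(1-\vp)F_2$ along $\ga$ pick up extra terms of the form $\vp'\cdot\mathrm{Hess}(F_1-F_2)$, so the identity $H_{C_{12}}=\vp H_{C_1}+(1-\vp)H_{C_2}$ is not a formal consequence of the bookkeeping you describe. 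The paper circumvents this by a genuinely different two-stage gluing: first (Step 2.1) glue each $C_l$ to its own tangent plane $TC_l$ using Remark \ref{flat} --- here the Hessians of the two potentials agree along $\ga$, so the argument of Lemma \ref{ted} does apply --- and then (Step 2.2) bridge the two \emph{flat} pieces $TC_1,TC_2$ by the explicit rotating Lagrangian $R=\gr\na\frac{\rh(x_3)(x_1^2-x_2^2)}{2}$, whose mean curvature along $\ga$ is checked to vanish by the direct computation in Appendix \ref{tedc}. That intermediate flattening is the missing idea in your proposal.
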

	\subsection{Proof of Proposition \ref{oe1}}
	Pick any edge on the graph $G,$ connecting two vertices $v_1,v_2.$ Suppose we have two pairs $C_1\cup P_1$ realizing $v_1$ and $C_2\cup P_2$ realizing $v_2.$ Fix any choice of rays $\ga_1\s C_1\cap P_1$ and $\ga_2\s C_2\cap P_2$ of the two realizing pairs of $v_1,v_2$. 
	
	%	In this section, we will prove the following.	\begin{prop}\label{oe}		There exists an embedding of $C_1\cup P_1$ and $C_2\cup P_2$ into $\R^6$ so that $P_1=P_2.$ The vertex of $C_1$ is at $0,$ and the vertex at $C_2$ is at $(0,0,2p_3)\s \C^3$ with arbitrary $p_3>0$. $\ga_1$ is the nonnegative $x_3$-axis, while $\ga_2$ is pointing towards the nonpositive $x_3$ axis. Denote $\ga=\ga_1\cap \ga_2.$ There exists a smooth differential form $\psi$ in a tubular neighborhood $B_r(\ga),$ which is a calibration form with respect to a metric $g.$ There exists an area-minimizing current $T$ calibrated by $\psi$ in $(B_r(\ga),g)$ so that the singular set of $T$ in is precisely $\ga$ union with all the other rays in $C_1\cap P_1,C_2\cap P_2$ except for $\ga_1,\ga_2.$ Moreover, $\pd T\s \pd B_r(\ga),$ and $T$ decomposes into $\llbracket T\rrbracket =\llbracket P\lrcorner O\rrbracket +\llbracket \Si\rrbracket ,$ with $\pd\Si\s\pd O,\textnormal{spt}\Si\s O$ and $G=P\cap \Si$ as sets.	\end{prop}
	Recall the construction of the calibration and surface in the previous section. Roughly speaking, the steps are as follows.
	
	Step 1, choice of the preferred coordinate and the reflection;\\
	Step 2, glue the two cones together;\\
	Step 3, the non-closed prototype of the calibration;\\
	Step 4, producing the calibration form;
	
	\subsection*{Step 1}
	First, we need to prescribe the positions of the cones. Recall that $\ga_1\s C_1\cap P_1$ and $\ga_2\s C_2\cap P_2$ are the two rays we choose from the realizing pairs. Apply Lemma \ref{cocf}. We get two pairs $C_1\cup P$ and $C_2\cup P,$ with $\ga_1$ being the non-negative $x_3$-axis and $\ga_2$ being the non-positive $x_3$-axis. Moreover, for any point on $\ga_j$ except for $0$, the tangent plane to $C_j$ can be parametrized using (\ref{gpm}). We now translate $C_2\cup P$ along the $x_3$-axis by $(0,0,2p_3),$ with $p_3>0$ a fixed real number. Now $\ga_1$ and the translated $\ga_2$ intersect along a line segment $\ga.$ The point $p=(0,0,p_3)$ is the midpoint of $\ga.$ Abusing the notation a little, we will denote the tangent plane to any point of $C_j$ on $\ga_j$ by $TC_j$.
	
	If $\rh_1=\rh_2$ in the parametrization of $TC_j$, then exactly the same argument as in the previous section will glue the two realizing pairs together. From now on, we assume that $\rh_1\not=\rh_2.$
	\begin{figure}[h]
		\raggedleft 
		\includegraphics[width=0.8\linewidth,angle=90]{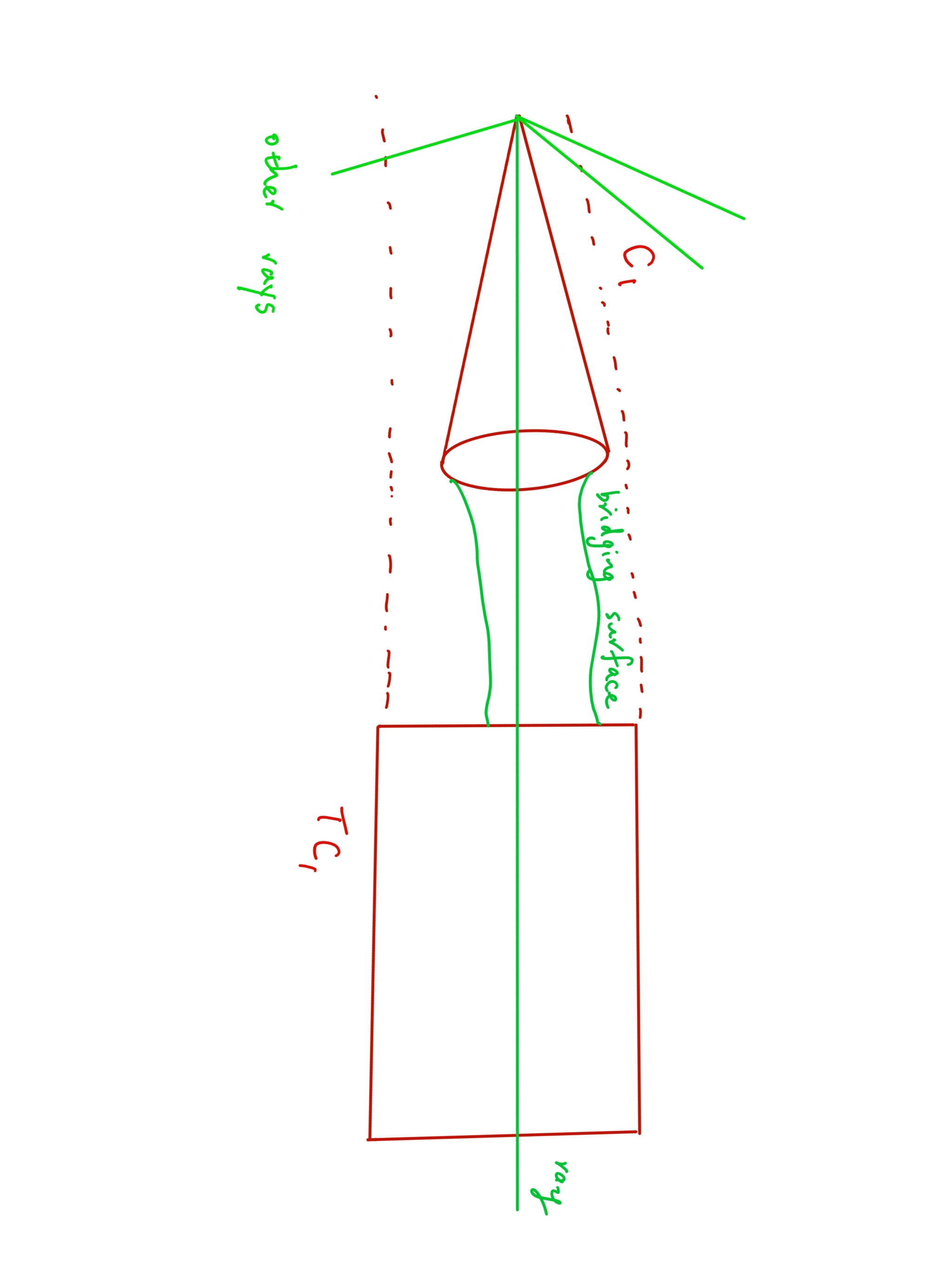}
		\caption{Step 2.1 Gluing tangent planes to the cones}
		\label{fig:gluing-tangents}
	\end{figure}
	\subsection*{Step 2.1}
	Instead of gluing between $C_j$ directly, we first glue $TC_j$ to $C_j.$ To be precise, we take a very small neighborhood of $B_r(\ga)$ of $\ga$ so that for some fixed $0<r_0<\frac{1}{5},$ the cone $C_1$ (respectively, $C_2$) is graphical over $TC_1$ (respectively, $TC_2$) for $\{\frac{1}{5}r_0<x_3<\frac{2}{5}r_0\}\cap B_r(\ga)$ (respectively, $\{2p_3-\frac{2}{5}r_0<x_3<2p_3-\frac{1}{5}r_0\}\cap B_r(\ga)$). Then, by Lemma \ref{flat}, using the argument in Section \ref{ols}, we can glue $TC_1$ to $C_1$ using a Lagrangian graph over $TC_1$ in $ \{\frac{2}{5}r_0<x_3<\frac{4}{5}r_0\}\cap B_r(\ga)$ to get a Lagrangian surface $C'_1,$ which coincide with $TC_1$ in $\{x_3>\frac{4}{5}r_0\}\cap B_r(\ga)$. Moreover, $C'_1$ is calibrated with respect to a metric $\de_1$ and a form $\om_1$ which coincides with the standard Euclidean metric and special Lagrangian form except on $ \{\frac{2}{5}r_0<x_3<\frac{4}{5}r_0\}\cap B_r(\ga)$.
	
	Now we do the same argument to glue $TC_2$ to $C_2.$ We will get a Lagrangian graph bridging $C_2$ to $TC_2$ in $ 2p_3-\{\frac{4}{5}r_0<x_3<2p_3-\frac{3}{5}r_0\}\cap B_r(\ga)$. The glued together Lagrangian surface $C'_2$ coincides with $TC_2$ in $\{x_3<2p_3-\frac{4}{5}r_0\}\cap B_r(\ga)$. Moreover, $C'_2$ is calibrated with respect to a metric $\de_2$ and a form $\om_2$ which coincides with the standard Euclidean metric and special Lagrangian form except on $\{\frac{2}{5}r_0<x_3<\frac{4}{5}r_0\}\cap B_r(\ga)$.
	\begin{figure}[h]
		\centering
		\includegraphics[width=1.1\linewidth]{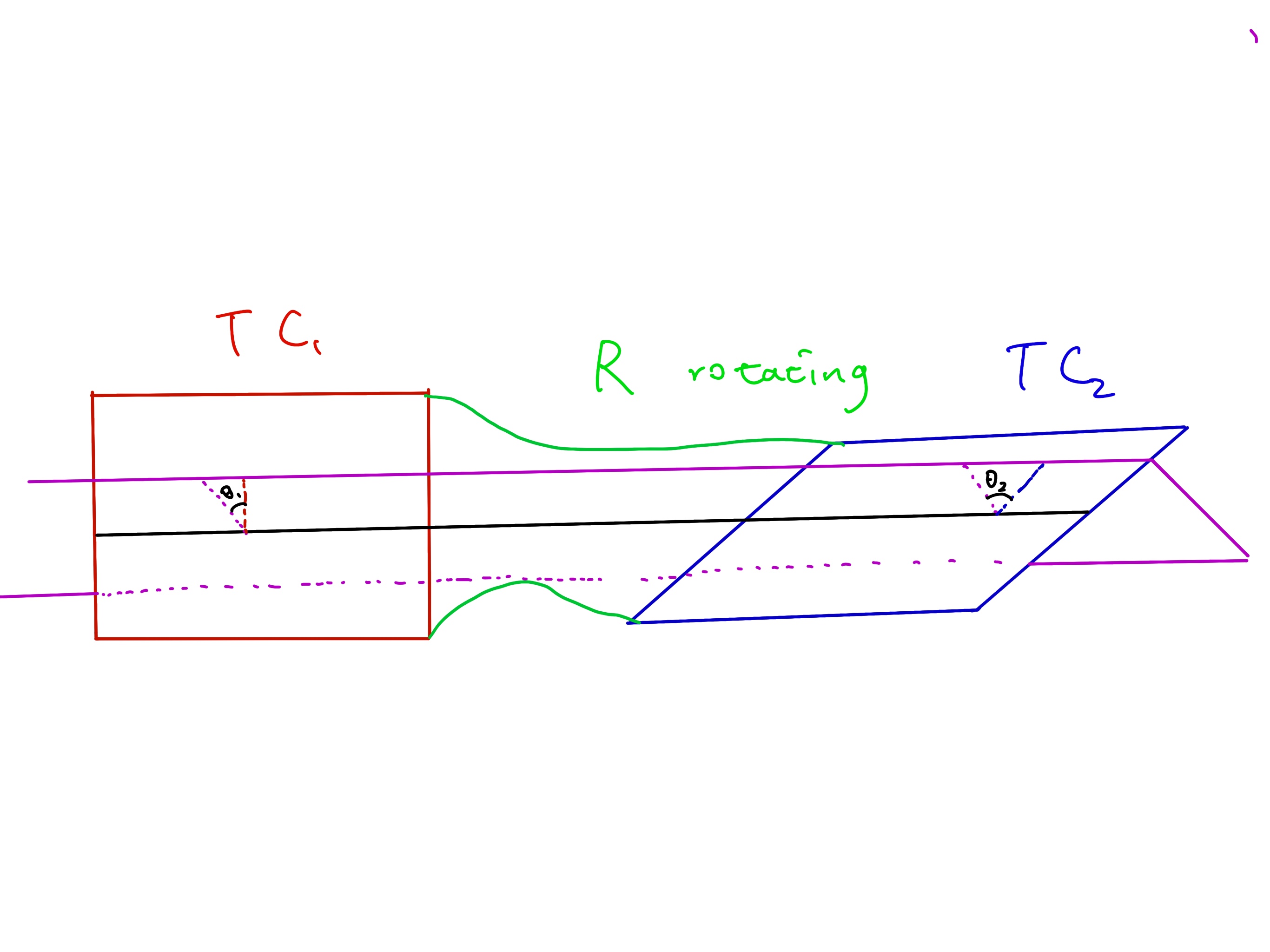}
		\caption{Step 2.2 Bridging using rotating tangents}
		\label{fig:rotating-tangent}
	\end{figure}
	\subsection*{Step 2.2}
	We now want to glue $C'_1$ to $C'_2$ in $B_r(\ga)\cap \{\frac{3}{2}r_0<x_3<2p_3-\frac{3}{2}r_0\}.$ Note $C_1'$ and $C_2'$ is just $TC_1$ and $TC_2$ in $\{r_0<x_3<2p_0-x_3\}$, which can be parametrized as
	\begin{align}
		TC_j=\{(x_1,x_2,x_3,\rh_jx_1,-\rh_jx_2,0)|x_1,x_2,x_3\in\R\}=\gr\na \frac{\rh_j(x_1^2-x_2^2)}{2}.
	\end{align}
	Consider the bridging surface Lagrangian graph $R$ defined on $x_1x_2x_3-$plane$\cap B_r(\ga)\cap \{r_0<x_3<2p_3-r_0\}$ by
	\begin{align*}
		R=\gr\na\frac{\rh(x_3)(x_1^2-x_2^2)}{2},
	\end{align*} 
	with $\rh_3$ a monotonic function, and $\rh(x_3)\equiv \rh_1$ for $x_3<\frac{3}{2}r_0$ and $\rh(x_2)\equiv \rh_2$ for $x_3>2p_3-\frac{3}{2}r_0$.
	
	Now, use $L$ to denote $\na\frac{x_1^2-x_2^2}{2},$ which is just $(x_1,-x_2,0).$ We have
	\begin{align*}
		\na \frac{\rh(x_3)(x_1^2-x_2^2)}{2}&=\rh(x_3)L+\frac{x_1^2-x_2^2}{2}\na \rh(x_3)=(\rh(x_3) x_1,-\rh(x_3) x_2,\rh'(x_3)\frac{x_1^2-x_2^2}{2}),\\
		d\na\frac{\rh(x_3)(x_1^2-x_2^2)}{2}&=(d\rh)L+\rh dL+L\du \na\rh(x_3)+\frac{x_1^2-x_2^2}{2}d\na\rh(x_3),
	\end{align*} where $L\du=d\frac{x_1^2-x_2^2}{2}$ denotes the dual of the gradient. It is clear that $R$ only intersect the $y_1y_2x_3$-plane, i.e., $x_1=x_2=y_3=0$ along $\ga.$ On $\ga,$ we have $x_1=x_2=0,$ and thus $L=0,$ and $$d\na\frac{\rh(x_3)(x_1^2-x_2^2)}{2}=\rh dL.$$ This implies that the tangent plane to any point $q$ along $\ga$ on $R$ is parametrized by
	\begin{align}\label{tsp}
		T_qR=\gr \rh(x_3)\na \frac{x_1^2-x_2^2}{2}.
	\end{align} 
	This implies $T_qR$ is special Lagrangian at $q\in \ga.$ (Just note that by splitting off $x_3$-axis and changing into $w_1=x_1+ix_2,w_2=y_1-iy_2$ coordinate we get $w_2=\rh w_1$.)
	Moreover, for any point $q'$ on $R$ near $\ga$, its tangent plane's distance to the special Lagrangian plane $(d\rh(q'_3)+\rh(q'_3))\gr \rh(x_3)\na \frac{x_1^2-x_2^2}{2}$ can be controlled by $r\no{\rh}_{C^1}+r^2\no{\rh}_{C^2},$ where $r$ is the distance to $\ga.$ Thus, if we make $r$ small, then $T_{q'}R$ can be made $C^0$ close to being special Lagrangian. 
	\subsection*{Step 3}
	Again, $\ga$, the tangent planes to $\R$ are $O(r)$ close to being special Lagrangian. Thus, by shrinking the $r$ in $B_r(\ga),$ and using an implicit function argument, we can assign an $h_q\in U(3)$ to any point $q\in R,$ which depends smoothly on $R.$ $h_q$ is simply the $U(3)$ action that sends $x_1,x_2,x_3$-plane to the tangent plane of $R$ at $q.$ Moreover, $h_q\in SU(3)$ naturally for $q\in \ga,$ since by (\ref{tsp}), for $q\in\ga,$ $T_q R$ is a special Lagrangian plane. Now, apply Lemma \ref{ncs} to $R$ with $n_1=(0,0,0,1,0,0),n_2=(0,0,0,0,1,0),n_3=(0,0,0,0,0,1)$. Extend $h_q$ constantly along all $Y_j$ directions. Now consider the form $\ov{\phi}_q=(h_q\m)\du\phi.$ Since $h_q\in SO(6),$ $\ov{\phi}$ is still a calibration form. Moreover, we claim that for $\det h_q\m=e^{i\ta},$ we have \begin{align}\label{hzero}
		d\ta|_P\equiv 0.
	\end{align} Again, this is by tedious calculations similar to the one in Lemma \ref{ted} and we leave it to Appendix \ref{tedc}. By (\ref{dta})This implies $d\ov{\phi}=-d\ta\w (\sin \ta \phi+\cos\ta J\du\phi)$ is also $0$ on  $P.$
	\subsection*{Step 4}
	Apply Lemma \ref{inte} to the $Q$ coordinate system of $R$ to get a closed form $\psi$ so that it is equal to the special Lagrangian form $\phi$ on $P,$ on $\Si$ and outside of $B_r(\ga)\cap \{\frac{3}{2}r_0<x_3<2p_3-\frac{3}{2}r_0\}.$ Moreover, $\no{\psi-\ov{\phi}}\le \no{d\ov{\phi}}_{C^0}$. By construction $\no{d\ov{\phi}}_{C^0}\le \no{d\ta}_{C^0}.$ By III.2.D (2.19) in \cite{HLg}, $\no{d\ta}_{C^0}\le \no{H}_{C^0}.$ By smoothness of $R,$ as long as we shrink $r$ small enough, we can guarantee that $\no{\psi-\no{\phi}}_{C^0}\le \e$ for any $\e$ we want. Since $\no{\ov{\phi}-\phi}_{C^0}=O(r)$, we can make $\no{\psi-\phi}_{C^0}$ as small as we want. Thus, we can apply Lemma \ref{ift} to deduce the existence $h_q'\in GL(6)$, which depends smoothly on $q,$ so that $\psi=(h_q')\du \phi.$ Now make the new metric $g_q=((h_q')\m)\du\de,$ and we are done.
	
	\appendix     
	\section{Basic facts about immersion of Riemannian submanifolds}\label{bas}
	First, we recall the following facts for calculating the second fundamental form. We follow Einstein's summation conventions.
	
	Let $f$ an immersion from the unit ball $B_1^m(0)\s \R^m$ to $\R^{m+k}$ be an immersion. We will calculate the second fundamental of $f(B_1(0))$ pointwise in terms of the coordinate in $\R^m.$ First, let $v_j=f\pf(\pd_j)$ be the pushforward of the basis on $\R^m.$ We have $v_j=(\pd_j f^\ai)\pd_\ai'.$ (Here we use $\pd_j$ to denote the standard basis on $R^m$ and $\pd_\ai'$ be the standard basis on $\R^{m+k}$.) The metric coefficients in $v_j$ are $g_{ij}=\ri{v_i,v_j}.$ Now, it suffices to calculate the covariant derivative $\na_{v_i}v_j.$ We will use $[f(p)]$ to denote a function (or vector) that is evaluated at (the tangent space of) that point. For every point $x\in B_1^m(0),$ we have
	\begin{align*}
		\na_{v_i[f(x)]}v_j[f(x)]=&\na_{\pd_i f^\ai[x] \pd_{\ai}'[f(x)]}\pd_j f^\be[x] \pd_{\be}'[f(x)]\\
		=&\pd_i f^\ai[x] \pd'_\ai[f(x)](\pd_j f^\be[x])\pd'_\be[f(x)]+\pd_i f^\ai[x] \pd_j f^\be[x]\na_{\pd'_\ai[f(x)]}\pd'_\be[f(x)]\\
		=&f\pf\pd_i[f(x)](\pd_j f^\be[x])\pd'_{\be}[f(x)]+0\\
		=&\pd_i[x](\pd_j f^\be[x])\pd'_{\be}[f(x)]\\
		=&\pd_i\pd_j f^\be\pd'_\be.
	\end{align*}
	Thus, for any normal vector $n,$ the second fundamental form $A(v_i,v_j)$ at $f(x)$ of $f(B_1(0))$ is
	\begin{align*}
		\ri{A(v_i,v_j),n}=&\ri{\na_{v_i[f(x)]}v_j[f(x)],n}\\=&\ri{\pd_i\pd_j f^\be\pd'_\be,n^\ai \pd'_\ai}\\
		=&\pd_i\pd_jf^\be n^\be.
	\end{align*}
	The mean curvature $H$ is
	\begin{align*}
		\ri{H,n}=&g^{ij}\ri{A(v_i,v_j),n}=g^{ij}\pd_i\pd_j f^\be n^\be.
	\end{align*}
	\subsection{Equation (\ref{hzero})}\label{tedc}Again by III.2.D (2.19)  in \cite{HLg}, it suffice to verify that the mean curvature of $R$ at any point on $\ga\cap\{r_0<x_0<2p_3-r_0\}$ is zero, i.e.,
\begin{align}\label{mcz}
	H_R|_{\ga\cap\{r_0<x_0<2p_3-r_0\}}=0.
\end{align}
Here the immersion of $R$ is defined as
\begin{align*}
	f(x_1,x_2,x_3)=(x_1,x_2,x_3,\rh(x_3) x_1,-\rh(x_3) x_2,\rh'(x_3)\frac{x_1^2-x_2^2}{2}).
\end{align*}
We have the basis vectors,
\begin{align*}
	v_1=&(1,0,0,\rh(x_3),0,\rh'(x_3)x_1),\\
	v_2=&(0,1,0,0,-\rh(x_3),-\rh'(x_3)x_2),\\
	v_3=&(0,0,1,\rh'(x_3)x_1,-\rh'(x_3)x_2,\rh''(x_3)\frac{x_1^2-x_2^2}{2}).
\end{align*}
Along $\ga,$ we have $x_1=x_2=0.$ For any point $q$ on $\ga,$ this implies that
$v_1,v_2,v_3$ is an orthogonal basis with $\no{v_1}=\no{v_2}=\sqrt{1+\rh^2},\no{v_3}=1$. Moreover, the normal space to $T_qR$ is generated by
\begin{align*}
	v_4=&(-\rh,0,0,1,0,0),\\
	v_5=&(0,\rh,0,0,1,0),\\
	v_6=&(0,0,0,0,0,1).
\end{align*}
We have $\pd_j^2f^k=0$ for $1\le j,k\le 3,$ and $1\le j\le 2,4\le j\le 5$ and
\begin{align*}
	\pd_3^2f^4=\rh' x_1,\pd_3^2f^5=-\rh' x_2,\pd_1^2f^6=\rh',\pd_2^2f^6=-\rh',\pd_3^2f^6=\rh^{(3)}\frac{x_1^2-x_2^2}{2}.
\end{align*}
When restricting to $\ga,$ then only nonzero term is $\pd^2_1f^6,\pd_2^2 f^6.$
The only nonzero terms of $g^{ij}$ on $\ga$ is $g^{11}=g^{22}=\frac{1}{\sqrt{1+\rh^2}},g^{33}=1$. Thus, we have $	\ri{H,v_4}=\ri{H,v_5}=0,$ as $v_4,v_5$ has no sixth component. 
\begin{align*}
	\ri{H,v_6}=g^{11}\pd_1^2f^6+g^{22}\pd_2^2f^6=0.
\end{align*}
This implies $H=0$ along $\ga$.
	\section{Proof of Lemma \ref{ift}}\label{cif}
	Recall that we only need to calculate $\dim d\ker\lam$, the dimension of the Lie algebra of the invariant group of $\phi$, with $\phi$ being the special Lagrangian $3$-form,
	\begin{align*}
		\phi=\Re dz_1\w dz_2\w dz_3=dx_1\w dx_2\w dx_3-dy_1\w dy_2\w dx_3-dx_1\w dy_2\w dy_3-dy_1\w dx_2\w dy_3.
	\end{align*}
	To simplify the notation, sometimes we will also use $\pd_j$ to denote $\pd_{x_j}$ and use Roman numerals to indicate $y_j$-vectors. For example $\pd_{II}\equiv\pd_{y_2}.$ 
	We will let the coordinate be $(x_1,\cd,x_3,y_1,\cd,y_3),$ with identification $z_j=x_j+iy_j.$ We will number the coordinates by $1,2,3,I,II,III.$ Suppose $d\lam(h)\equiv0.$ Note that $d\lam(h)$ is still an exterior form, so we only have to verify the zero condition on the basis of $\bigwedge^3(\R^6).$ We will list the $20$ equations below. The notation is as follows $h_{1,III}$ means the $\ri{h\pd_{y_3},\pd_{x_1}}.$ The four basis in $\phi$ are $$123,-1 (II) III,-I2III,-I(II)3.$$ The equations are
	\begin{align*}
		123:&h_{1,1}&+h_{2,2}&+h_{3,3}&&&&=0,\\
		12I:&h_{III,1}&&&+h_{3,I}&&&=0,\\
		12II:& &h_{III,2}& &&+h_{3,II}&&=0,\\
		12III:&-h_{I,1}&-h_{II,2}&&&&+h_{3,III}&=0,\\
		13I:&-h_{II,1}&&&+h_{2,I}&&&=0,\\
		13II:&h_{I,1}&&+h_{III,3}&&-h_{2,II}&&=0,\\
		13III:&&&-h_{II,3}&&&-h_{2,III}&=0,\\
		1I(II):&-h_{3,1}&&&+h_{III,I}&&&=0,\\
		1I(III):&h_{2,1}&&&-h_{II,I}&&&=0,\\
		1II(III):&-h_{1,1}&&&&-h_{II,II}&-h_{III,III}&=0,\\
		23I:&&-h_{II,2}&-h_{III,3}&+h_{1,I}&&&=0,\\
		23II:&&h_{I,2}&&&-h_{1,II}&&=0,\\
		23III:&&&h_{I,3}&&&-h_{1,III}&=0,\\
		2I(II):&&-h_{3,2}&&&+h_{III,II}&&=0,\\
		2I(III):&&h_{2,2}&&+h_{I,I}&&+h_{III,III}&=0,\\
		2(II)(III):&&-h_{1,2}&&&+h_{I,II}&&=0,\\
		3I(II):&&&-h_{3,3}&-h_{I,I}&-h_{II,II}&&=0,\\
		3I(III):&&&h_{2,3}&&&-h_{II,III}&=0,\\
		3II(III):&&&-h_{1,3}&&&-h_{1,III}&=0,\\
		I(II)III:&&&&-h_{1,I}&-h_{2,II}&-h_{3,III}&=0.
	\end{align*}
	One can verify that the only terms that appear twice are $$h_{1,1},h_{I,1},h_{2,2},h_{II,2},h_{3,3},h_{III,3},h_{1,I},h_{I,I},h_{2,II},h_{II,II},h_{3,III},h_{III,III}.$$ However, any term that appears twice involves two equations that only have that term in common. Thus, the rank is $20,$ equal to the number of equations. This verifies that $\dim\ker d\lam=\dim GL(\R^6)-\dim\bigwedge^3(\R^6).$
	\begin{rem}
		Professor Camillo De Lellis has pointed out that the same argument as above works for the associative $3$-form. What happens for the other cases? For $k=0,n,$ we have not much to say, as the corresponding degree exterior forms are of only dimension $1.$ For $k=2,$ this dimension counting is true for symplectic forms and partially explains the Darboux theorem. Let $\kappa(k,n)=\dim GL(\R^n)-\dim\bigwedge^k(\R^n).$ $\kappa>0$ is indeed a somewhat rare condition for large $k$, as $\dim \bigwedge^k(\R^n)\sim n^{n-k}/k!,$ while $\dim GL(\R^n)=n^2.$ Indeed we have
		\begin{align*}
			\kappa>0 \textnormal{ if and only if }\begin{cases}
				n\le 7,\\
				\text{or }k=0,1,2,n-2,n-1,n,\\
				\text{or }n=8,k\not=4.
			\end{cases}
		\end{align*} 
	\end{rem}

	\section{Calculations of Lemma \ref{rays}}
	The calculations are done as follows. Since $C$ is conical, to calculate the number of intersecting rays of $C$ and any special Lagrangian plane, it suffices to consider the intersection points of the link $F$ with the plane. Moreover, by Theorem 0.2 in \cite{DS1}, $F$ can only intersect a special Lagrangian plane along finitely many points.
	
	Note that $F$ is the variety determined by the following equations
	\begin{align}\label{dfe}
		\begin{cases}
			z_1z_2z_3=\frac{1}{3\sqrt{3}},\\
			|z_1|^2=|z_2|^2=|z_3|^2=\frac{1}{3}.
		\end{cases}
	\end{align}
	Thus, we have to calculate the number of points on the plane $P(\tau,\ta).\pi_0$ satisfying (\ref{dfe}).
	
	The plane $P(\tau,\ta).\pi_0$ is spanned by its rows, so any vector therein can be represented as
	\begin{align*}
		v=&\begin{pmatrix}
			a&b&c
		\end{pmatrix}.P(\tau,\ta).\pi_0\\=&\bigg(\frac{a}{\sqrt{3}}+\frac{i \left(b e^{i t} \cos (s)-c e^{-i t} \sin
			(s)\right)}{\sqrt{2}}+\frac{i \left(b e^{i t} \sin (s)+c e^{-i t} \cos
			(s)\right)}{\sqrt{6}}\\&\frac{a}{\sqrt{3}}-i \sqrt{\frac{2}{3}} \left(b e^{i t} \sin (s)+c e^{-i
			t} \cos (s)\right)\\&\frac{a}{\sqrt{3}}-\frac{i \left(b e^{i t} \cos (s)-c e^{-i t} \sin
			(s)\right)}{\sqrt{2}}+\frac{i \left(b e^{i t} \sin (s)+c e^{-i t} \cos
			(s)\right)}{\sqrt{6}}\bigg),
	\end{align*}where $a,b,c\in\R.$
	We only have to substitute the coordinates of $v$ into \ref{dfe} and look for solutions. Moreover, note that $\no{v}^2=a^2+b^2+c^2$, and $F$ sits inside the unit ball. Thus, it suffices to look for $|a|,|b|,|c|<2.$ The rest is a very tedious calculation. We will provide a Mathematica program that can verify our calculations. The code is attached at the end of the document.

	\section{Mathematica verifications (see attachments)}\label{mat}

\includepdf[pages=-]{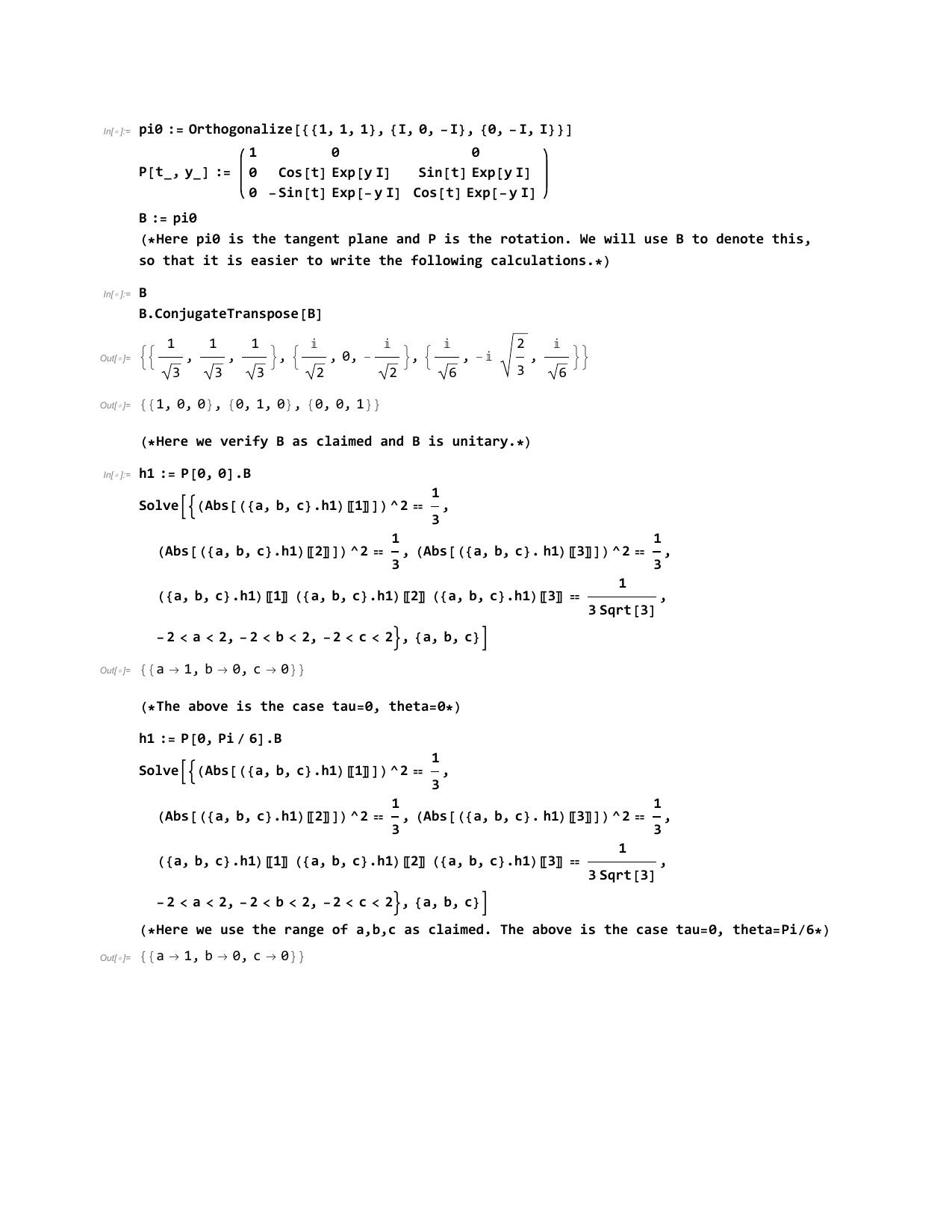}

\end{document}